\newcommand{\bu}{\mathbf{u}}
\newcommand{\tbv}{\tilde{\bv}}
\newcommand{\bv}{\mathbf{v}}
\newcommand{\hv}{\hat{\bv}}
\newcommand{\tbw}{\tilde{\bw}}
\newcommand{\bw}{\mathbf{w}}
\newcommand{\R}{\mathbb{R}}
\newcommand{\f}{\mathbf{f}}
\newcommand{\g}{\mathbf{g}}
\newcommand{\h}{\mathbf{h}}
\def\dis{\displaystyle}
\newtheorem{lemma}{\bf Lemma}[section]
\newtheorem{theorem}{\bf Theorem}[section]
\newtheorem{prop}{\bf Proposition}[section]
\newtheorem{remark}{\bf Remark}[section]
\title{\bf Existence of strong solutions to the steady Navier-Stokes equations
for a compressible heat-conductive fluid with large forces}
\author{Changsheng Dou$^{1,}$\thanks{Corresponding author.
E-mails: douchangsheng@163.com douchangsheng@cueb.edu.cn (C. Dou),\;\; jiangfei0591@163.com (F. Jiang),\;\;
jiang@iapcm.ac.cn (S. Jiang),\;\; fudanyoung@gmail.com (Y. Yang)},\;\; Fei Jiang$^{2,3}$,\;\; Song Jiang$^2$,\;\; Yongfu Yang$^{4}$}
\date{}
\begin{document}
\maketitle

\vspace{-3mm}

\begin{center}
{\small
$^1$ School of Statistics, Capital University of Economics and Business,\\
  Beijing 100070, P.R. China\\[2mm]
$^2$ Institute of Applied Physics and Computational Mathematics,\\
 P.O. Box 8009, Beijing 100088, P.R. China\\[2mm]
$^3$ College of Mathematics and Computer Science, Fuzhou University,\\
     Fuzhou 350108, China\\[2mm]
$^4$Department of Mathematics, College of Sciences, Hohai University,\\
 Nanjing 210098, Jiangsu Province, P.R. China  \\[2mm]

}
\end{center}

%
%

\begin{abstract}
We prove that there exists a strong solution to the Dirichlet boundary value problem for the steady Navier-Stokes
equations of a compressible heat-conductive fluid with large external forces
in a bounded domain $\Omega\subset{\mathbb R}^d$ ($d=2,3$), provided that
the Mach number is appropriately small. At the same time,
the low Mach number limit is rigorously verified. The basic idea in the proof is to split
the equations into two parts, one of which is similar to the steady incompressible
Navier-Stokes equations with large forces, while another part corresponds to the steady
compressible heat-conductive Navier-Stokes equations with small forces.
The existence is then established by dealing with these two parts separately, establishing
uniform in the Mach number a priori estimates and exploiting
the known results on the steady incompressible Navier-Stokes equations.
\end{abstract}
\vspace{3mm}

\noindent {\bf MSC:} 76N99; 35M33; 35Q30

\vspace{3mm}

\noindent {\bf Keywords.} Steady compressible heat-conductive Navier-Stokes equations,
large external forces, existence of strong solutions, low Mach number limit,
Dirichlet boundary condition.


\renewcommand{\theequation}{\thesection.\arabic{equation}}
\setcounter{equation}{0}
\section{Introduction}

This paper is mainly concerned with the existence of strong
solutions to the steady Navier-Stokes equations of a compressible
heat-conductive fluid in a bounded domain $\Omega\subset{\mathbb R}^d$ ($d=2,3$) with
large external forces: 
\begin{equation}
\label{scns}
\left\{\begin{array}{llll}
\text{div}(\varrho\mathbf{u}) = 0, \\[1mm]
\varrho\mathbf{u}\cdot\nabla \mathbf{u}+\nabla p = \mbox{div}\, \mathbb{S}(\nabla \mathbf{u})+\varrho\f +\g,
\\[1mm]
c_{_V}\varrho {\bf u}\cdot\nabla\Theta +p\, \mbox{div}\mathbf{u}=\kappa\triangle\Theta +\Psi.
\end{array}\right.
\end{equation}
Here $\varrho$ denotes the density, $\mathbf{u}\in\R^d$ the
velocity, $\Theta$ the temperature, $p=R\varrho\Theta$ the pressure with
$R > 0$ being the gas constant, $c_{_V}>0$ is the heat capacity at constant volume;
$\mathbf{f}$ is the density of external body force and $\g$ is a given external force.
The stress tensor $\mathbb{S}$ and the dissipation function $\Psi$ are defined by
\begin{align}
\mathbb{S}=2\mu D({\bf u})+\lambda \text{div}\mathbf{u}\,\mathbb{I},\quad
 \Psi=2\mu D({\bf u}): D({\bf u})+\lambda({\rm div}{\bf u})^2\ge0,\nonumber
\end{align}
$D({\bf u})=(\nabla \mathbf{u} +\nabla\mathbf{u}^t)/2$ is the deformation tensor.
The viscosity coefficients $\mu ,\lambda$ satisfy $2\mu +d\lambda \geq 0$ and $\mu >0$,
$\kappa >0$ is the heat conductivity coefficient.
Moreover, the total mass is prescribed:
$$\int_{\Omega}\varrho dx = M > 0.$$  

We impose that the velocity $\bu$ satisfies no-slip boundary
condition and the temperature $\Theta$ keeps constant on the boundary of $\Omega$ , i.e.,
\begin{equation}\label{bc'}
\bu=0,\quad\;\; \Theta=\vartheta_0\ \ \ {\rm on}\ \partial\Omega.
\end{equation}

In the last decades, the steady compressible heat-conductive Navier-Stokes equations have been studied by many
mathematicians and there are a lot of results on the existence in the literature,
here we recall some of them for both small and large external forces which are related
to our study in this paper, and we refer to the monograph \cite{NS04} for more details.
When external forces are sufficiently small, Matsumura and Nishida in 1982/83 proved
the existence of a solution with potential forces near a rest state \cite{MN82,MN83}, while
Valli and Zajackowski \cite{Valli83,VZ86} used the existence of global non-stationary solutions
to get the existence of stationary solutions. Later, Valli \cite{Valli87} showed
the existence of stationary solutions in the case of general forces by using an idea of Padula \cite{Padula87} to
decompose the equations into two parts that are governed by the Stokes equations and
a transport equation, respectively.
Beir\~ao da Veiga \cite{Veiga87} obtained more general existence results in the $L^p$-setting by
decomposing the equations into three parts
that are governed by the Stokes equations, a transport equation and the Laplace equation, respectively.
Another decomposition was studied in the paper \cite{NP94}.
In 1989, Farwig \cite{Farwig89} showed the existence of solutions to the steady
compressible heat-conductive Navier-Stokes equations for small forces with slip boundary condition.

When external forces are of arbitrary size, the existence of strong
solutions was proved in \cite{NP91,MP92} for the case of potential
forces. When the equations of state and the viscosity coefficients
satisfy certain (growth) conditions, Novotn\'y and Pokorn\'y
\cite{NP11,NP11-2} showed that weak or strong solutions to the
steady compressible heat-conductive Navier-Stokes equations exist.
Unfortunately, their results exclude the case of ideal polytropic gases,
for which the existence of strong solutions, to our best knowledge, still remains open.

The aim of the present paper is to establish the existence of strong solutions
to the steady compressible heat-conductive Navier-Stokes system (\ref{scns})
without any smallness assumption on the external forces $\f$ and $\g$, when the Mach number is small.

We mention that in the isentropic flow case, the existence of weak solutions or strong solutions
for large external forces has been extensively investigated. Lions
\cite{Lions98} first proved the existence of weak solutions under
the assumption that the specific heat ratio $\gamma>1$ in two
dimensions and $\gamma>5/3$ in three dimensions. The restriction on
$\gamma$ actually comes from the integrability of the density
$\varrho$ in $L^p$, and in fact, the higher integrability of
$\varrho$ has, the smaller $\gamma$ can be allowed. In \cite{NS04}
Novotn\'y and Stra\v{s}kraba showed the existence of weak solutions
for any $\gamma>3/2$ if $\f$ is potential and $\g =0$. By deriving a
new weighted estimate of the pressure, Frehse, Goj and Steinhauer
\cite{FGS}, Plotnikov and Sokolowski \cite{PS05} established an
improved integrability for the density under the assumption of the
$L^1$-boundedness of $\varrho\bu^2$ which was not shown to hold
unfortunately.
Plotnikov and Sokolowski \cite{PS07} proved the existence of
renormalized solutions to the Dirichlet boundary value problem for the compressible
Navier-Stokes equations for all $\gamma>4/3$.
In 2008, B\v{r}ezina and Novotn\'y \cite{BN08} was able to prove the existence of weak
solution to the spatially periodic problem for any
$\gamma>(3+\sqrt{41})/8$ when $\f$ is potential and $\g =0$, or for any
$\gamma>(1+\sqrt{13})/3\thickapprox1.53$ when $\f ,\g\in L^\infty$,
without assuming the $L^1$-boundedness of $\varrho \bu^2$, by
combining the $L^\infty$-estimate of $\triangle^{-1}P$ with the
(usual) energy and density bounds. 
Then, in the framework of
\cite{PS05-1}, Frehse, Steinhauer and Weigant \cite{FSW,FSW09}
established the existence of weak solutions to the Dirichlet boundary value problem
for any $\gamma>4/3$ in three dimensions and to the spatially periodic or mixed
boundary value problem for $\gamma=1$ (isothermal flow) in two dimensions.
Recently, Jiang and Zhou \cite{JZ11}
proved the existence of weak solutions to the spatially periodic problem in $\mathbb{R}^3$ for any $\gamma>1$
by establishing a new weighted estimate. More recently, the existence for the slip and
Dirichlet boundary value problems for $\gamma>1$ was shown by Jessl\'e and Novotn\'y \cite{JN13}, and Plotnikov and Weigant
\cite{PW13}, respectively.
Furthermore, we emphasize that the existence of strong solutions was shown by Choe and Jin  \cite{CJ00}
when the Mach number is small, by exploiting the known results for the incompressible
steady Navier-Stokes equations.

Now, we rewrite (\ref{scns}) in the form of the Mach number.
After scaling and a straightforward calculation we obtain
the following dimensionless form of the steady full compressible Navier-Stokes equations:
\begin{equation}
\label{1'}
\left\{\begin{array}{llll}
\text{div}(\varrho\mathbf{u}) = 0, \\[1mm]
\varrho\mathbf{u}\cdot\nabla \mathbf{u}+\displaystyle{\frac{\nabla p}{\epsilon^2}}
= \text{div} \mathbb{S}(\nabla \mathbf{u})+\varrho\f +\g,\\[1mm]
\varrho {\bf u}\cdot\nabla\Theta +p\,{\rm div}{\bf u}=\kappa\triangle\Theta +\epsilon^2\Psi ,
\end{array}\right.
\end{equation}
where $\epsilon$ is the Mach number.

Since the total mass of the fluid is given, we impose the condition
$$\bar{\varrho}:=\frac{1}{|{\Omega}|}\int_{\Omega}\varrho (x)dx>0, $$
which can be renormalized to $\bar\varrho=1$ without loss of generality. Similarly,
we also assume that $\bar{\Theta}=1$, $R=c_{_V}=1$, $\vartheta_0=1$.

To show the existence, we take the transformation
\begin{equation}
\varrho =1+\epsilon\rho,\quad \Theta =1+\epsilon\theta   \label{j1} \end{equation}
to rewrite the system (\ref{1'}) in the form:
\begin{equation}
\label{1}
\left\{\begin{array}{llll}
{\rm div}\bu+\epsilon{\rm div}(\rho{\bf u})=0,\\[1mm]
(1+\epsilon\rho)(\bu\cdot\nabla\bu)+\dis{ \frac{(1+\epsilon\theta)\nabla \rho}{\epsilon}
+\frac{(1+\epsilon\rho)\nabla\theta}{\epsilon} }
=\text{div} \mathbb{S}(\nabla \mathbf{u})+(1+\epsilon\rho)\f +\g, \\[4mm]
\epsilon(1+\epsilon\rho)\bu\cdot\nabla\theta+{\rm div}\bu
+(\epsilon\rho+\epsilon\theta+\epsilon^2\rho\theta){\rm div}\bu
=\epsilon\kappa\triangle\theta+\epsilon^2\Psi,
\end{array}\right.
\end{equation}
with boundary conditions
\begin{equation}\label{bc}
\bu=0,\quad\;\; \theta=0\ \ \ {\rm on}\ \partial\Omega.
\end{equation}

The low Mach number limit for the corresponding evolutionary equations
was investigated extensively. Here, we only refer to the non-isentropic Navier-Stokes equations,
whose analysis is more difficult due to the complexity of the system structure.
Hagstrom and Lorenz \cite{HL02} used a similar transformation to \eqref{j1}, the standard energy arguments
and decay estimates for heat kernels, and constructed a special symmetrizer for linear
hyperbolic-parabolic systems with large hyperbolic part satisfying the interaction
condition to get the low Mach number limit of compressible Navier-Stokes equations in $\R^n$.
 Bresch et al \cite{BDGL02} analyzed the acoustic waves by a method of characteristic expansions and
gave a formal asymptotics as $\epsilon\rightarrow0$ in a periodic domain under the assumption that the viscous heating and thermal diffusion
are negligible. Concerning the full compressible Navier-Stokes equations, Alazard \cite{Alazard06} studied this singular
limit for local $H^s$ solutions ($s> 2+\frac{n}{2}$) in $\R^n$ for "ill-prepared" initial data by employing the technique of
pseudo-differential operators which does not apply to the cases with boundary due to the restriction of the
Fourier transform. As an improvement of \cite{BDGL02}, Feireisl and Novotn\'y \cite{FN07} considered the low Mach number limit
for the periodic "variational solutions" to the full Navier-Stokes-Fourier equations of certain radiative gases
for "ill-prepared" initial data. Related progresses for bounded domains with various boundary conditions
can be found in \cite{FMN08,FNP08,DF09}. On the other hand, when the thermal conductivity vanishes, the
incompressible limit of the non-isentropic Navier-Stokes equations in bounded domains has been studied recently
in \cite{JO11} where the strong solutions are shown to be bounded uniformly in a local time interval
for ``well-prepared'' initial data, thus implying the limit. However, to our best knowledge,
%
%
the low Mach number limit for the non-isentropic Navier-Stokes equations governing polytropic gases
in bounded domains is not yet proved so far, and the aim of the current paper is thus to show this limit.
%
%

Now, we state the main result of this paper.
\begin{theorem}
\label{thm}
Let $\f,\g\in H^2(\Omega )$. Then there is an $\epsilon_0$ depending on $\|(\f,\g)\|_{H^2}$ and $\Omega$,
such that for any $\epsilon\in (0,\epsilon_0)$,
 there exists a solution
 $(\rho^\epsilon,{\bf u}^\epsilon,\theta^\epsilon)\in\bar H^2\times (H^3\cap H_0^1)\times (H^3\cap H_0^1)$
 to the boundary value problem \eqref{1}, \eqref{bc}, satisfying
$$\lim_{\epsilon\rightarrow0}\inf_{U,P\in \mathbf{L}}\|\bu^\epsilon-U\|_3 +\|\rho^\epsilon\|_2
+\|\theta^\epsilon\|_3+\Big\|\frac{\rho^\epsilon+\theta^\epsilon}{\epsilon}-P\Big\|_2=0,$$
where $(U,P)\in\mathbf{L}:=\{(U,P)\in (H^4\cap H_0^1)\times\bar H^3\;|\, (U,P)$ is a
solution of the incompressible steady Navier-Stokes equations \eqref{ISNS} with external force
$\f +\g$\}, i.e.,
\begin{equation}\label{ISNS}
\left\{\begin{array}{llll}
U\cdot\nabla U-\mu\triangle U+\nabla P=\f +\g ,  \\[1mm]
\mathrm{div}\, U=0,\\[1mm]
U=0\ \ {\rm on}\ \partial\Omega,\ \ \ \ \dis{\int_\Omega Pdx=0.}
\end{array}\right.
\end{equation}
\end{theorem}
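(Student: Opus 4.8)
The plan is to absorb the large force into a solution of the target incompressible problem and to treat the genuinely compressible, heat-conductive part as a system carrying only $O(\epsilon)$ forcing, so that small-data arguments apply uniformly in the Mach number. First I would fix a solution $(U,P)$ of \eqref{ISNS} with force $\f+\g$: such a solution exists for arbitrarily large forces by Leray's theory, and since $\f+\g\in H^2$, elliptic (Stokes) regularity places it in $(H^4\cap H_0^1)\times\bar H^3$. This $(U,P)$ is the candidate limit, so the problem is reduced to producing a compressible solution that is $O(\epsilon)$-close to it.

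Writing $\bu=U+\bw$ and introducing the effective pressure $\phi:=(\rho+\theta)/\epsilon$ (forced on us because $p=(1+\epsilon\rho)(1+\epsilon\theta)$ gives $\epsilon^{-2}\nabla p=\nabla\phi+\nabla(\rho\theta)$), I would recast \eqref{1} as three coupled subproblems for $(\bw,\phi,\theta)$. Subtracting \eqref{ISNS} turns the momentum equation into an Oseen/Lam\'e problem for $\bw$ with pressure $\phi-P$, whose right-hand side collects the convective remainders $U\cdot\nabla\bw+\bw\cdot\nabla U+(1+\epsilon\rho)\bw\cdot\nabla\bw$, the factors $\epsilon\rho(\cdots)$, and the quadratically small term $\nabla(\rho\theta)$; crucially the large part $\f+\g$ cancels. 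The continuity equation reads $\text{div}\,\bw=-\epsilon\,\text{div}(\rho\bu)$, so $\bw$ carries a prescribed divergence of size $O(\epsilon)$, and, dividing the energy equation by $\epsilon$ and using this, the temperature solves a genuinely elliptic problem $-\kappa\triangle\theta=\text{div}(\rho\bu)-(1+\epsilon\rho)\bu\cdot\nabla\theta-(\rho+\theta+\epsilon\rho\theta)\,\text{div}\,\bu+\epsilon\Psi$ with an $O(\epsilon)$ right-hand side. Finally $\rho=\epsilon\phi-\theta$ is recovered algebraically, so no separate stationary transport equation has to be solved and the loop closes.

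I would then run a fixed-point map $T:(\bw,\phi,\theta)\mapsto(\bar{\bw},\bar{\phi},\bar{\theta})$ that solves the linearized Oseen/Stokes problem, solves the elliptic heat equation, and sets $\bar{\rho}=\epsilon\bar{\phi}-\bar{\theta}$, all nonlinearities frozen at the previous iterate, and show that $T$ maps a small ball of $(H^3\cap H_0^1)\times\bar H^2\times(H^3\cap H_0^1)$ into itself and contracts once $\epsilon<\epsilon_0(\|(\f,\g)\|_{H^2},\Omega)$. The engine is a closed, uniform-in-$\epsilon$ a priori estimate of the schematic form $\|\bw\|_3+\|\phi-P\|_2+\|\theta\|_3\le C\epsilon+C\,r\,(\|\bw\|_3+\|\theta\|_3)$ with $C$ independent of $\epsilon$ and $r$ the ball radius, whence $\rho=\epsilon\phi-\theta=O(\epsilon)$; the solvability and estimates for the linearized incompressible operator around $U$ are exactly the ``known results on the steady incompressible Navier--Stokes equations'' being exploited here.

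The main obstacle is this uniform-in-$\epsilon$ estimate, in which two singular features must be tamed at once. The first is the pressure $\phi=(\rho+\theta)/\epsilon$, an indeterminate ``small over small'' quantity: it is controlled by reading $\nabla(\phi-P)$ off the Oseen/Stokes estimate, which bounds $\|\bw\|_3$ and $\|\phi-P\|_2$ through an $O(\epsilon)$ right-hand side together with the $O(\epsilon)$ prescribed divergence, its additive constant being pinned down by the normalizations $\int_\Omega\rho=0$ and $\int_\Omega P=0$. The second is the large drift $U$ in $U\cdot\nabla\bw+\bw\cdot\nabla U$, which cannot be treated perturbatively and must be kept inside the invertible linear operator; the smallness of $\text{div}\,\bw=O(\epsilon)$ and of the whole forcing is what prevents the constant from blowing up as $\epsilon\to0$. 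Once these uniform bounds are in hand, compactness yields $\bw\to0$, $\rho^\epsilon,\theta^\epsilon\to0$ and $\phi^\epsilon\to\tilde P$ with $(U,\tilde P)\in\mathbf{L}$, which is the asserted convergence; the infimum over $\mathbf{L}$ simply absorbs the possible non-uniqueness of the incompressible limit.
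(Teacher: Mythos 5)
Your proposal founders on one essential point: you fix $(U,P)$ as an exact (large) solution of \eqref{ISNS} and then try to close a fixed-point/contraction argument for the perturbation $\bw=\bu-U$. The linear problem you must then solve at each step is the Oseen-type system containing \emph{both} the transport term $U\cdot\nabla\bw$ \emph{and} the reaction term $\bw\cdot\nabla U$. For large $U$ this linearized operator is not known to be invertible, and in general it need not be: $\bw\cdot\nabla U$ is a zeroth-order term with coefficient $\nabla U=O(1)$ in $\epsilon$, so in the energy estimate it contributes $C\|U\|_3\|\bw\|_1^2$ with a large constant that cannot be absorbed by $\mu\|\nabla\bw\|_0^2$, and no smallness of $\epsilon$ or of the ball radius $r$ helps (this is exactly the mechanism behind bifurcation of steady solutions). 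Your schematic bound $\|\bw\|_3+\|\phi-P\|_2+\|\theta\|_3\le C\epsilon+Cr(\|\bw\|_3+\|\theta\|_3)$ therefore cannot be derived: the right-hand side actually contains $C\|U\|_3\|\bw\|_3$, which is not small. The paper avoids this precisely by \emph{not} fixing $U$: the incompressible component is itself an unknown solving $U\cdot\nabla U+\bv\cdot\nabla U-\mu\triangle U+\nabla P=\f+\g$ (system \eqref{i1}), and the linearization used in the Tikhonov iteration, $(\tilde U+\tilde\bv)\cdot\nabla U-\mu\triangle U+\nabla P=\h$, contains only the transport term, which is skew-symmetric up to $\mathrm{div}(\tilde U+\tilde\bv)$ and hence solvable by Lax--Milgram for arbitrarily large $\tilde U$ (Lemma \ref{lmI1}). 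The reaction term never has to be inverted. Relatedly, your claim that the map ``contracts'' is stronger than what the paper obtains (it uses the Tikhonov theorem, which gives existence without uniqueness), and there is no reason to expect contraction given the above.

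Two further gaps are worth naming. First, you treat the uniform $H^3$ bound on $\bw$ as a routine consequence of Stokes regularity, but with the no-slip boundary condition and the singular pressure $\nabla(\eta+\theta)/\epsilon$ the delicate step is the $H^2$-control of $\mathrm{div}\,\bv$ near the boundary; the paper devotes Section 2.2.3 (local isothermal coordinates, tangential versus normal derivatives, the auxiliary Stokes problem \eqref{stokesEb}) to exactly this, and your outline offers no substitute. Second, your remark that the infimum over $\mathbf{L}$ ``absorbs non-uniqueness'' is backwards in your own scheme: if you could really fix $(U,P)$ in advance the limit would be that specific pair and no infimum would be needed; the infimum appears in the theorem because in the paper's construction $U^\epsilon$ varies with $\epsilon$ and only subsequential limits in $\mathbf{L}$ are obtained.
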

\begin{remark}
If considering the existence of spatially periodic solutions to \eqref{1} in a periodic domain,
we can also obtain a existence result similar to Theorem \ref{thm}.
\end{remark}
\begin{remark}
In general, we could not get the uniqueness of strong solutions
to the boundary value problem \eqref{scns} due to lack of the
uniqueness of strong solutions to the corresponding incompressible
Navier-Stokes equations \eqref{ISNS}. As for the results of the
incompressible Navier-Stokes equations, one can refer to Galdi's book \cite[Chapter IX]{Galdi94}.
\end{remark}

The system (\ref{1}) is complicated mixed-type nonlinear equations
containing such structures as elliptic and hyperbolic systems, for
which the usual approach of the fixed point arguments used to prove
the existence of classical solutions requires the smallness of data. To show Theorem \ref{thm}
we split the system (\ref{1}) into two parts, one of which is similar to the
steady incompressible Navier-Stokes equations with large force $\f+\g$,
while another part corresponds to the steady compressible
heat-conductive Navier-Stokes equations with small force $\epsilon\f$,
provided the Mach number $\epsilon$ is small. Then, as noted in \cite{CJ00}, we modify and elaborately combine
the arguments in \cite{Galdi94} where the existence of strong solutions
to the incompressible Navier-Stokes equations for large forces was presented, and
in \cite{Farwig89} where strong solutions of the compressible viscous
heat-conductive equations with small forces were dealt with, to establish Theorem \ref{thm}.

Compared with the isentropic case studied in \cite{CJ00}, due to
presence of the energy equation, the main difficulties here lie in
obtaining the existence of weak solutions to the linearized system \eqref{CL},
dealing with the coupling terms between the velocity,
density and temperature, and deriving the uniform-in-$\epsilon$ estimates in a bounded domain,
for example, how to control the energy norm
$\|\bu-U\|_3+\|\eta\|_2+\|\theta\|_3$ uniformly in $\epsilon$ under
the no-slip boundary condition. To circumvent such difficulties, we
take the transform of $\varrho =1+\epsilon\rho$,
$\Theta=1+\epsilon\theta$ for the system (\ref{1'}) (see also \cite{HL02}),
instead of the transform ($\varrho=1+\epsilon^2\rho$, $\Theta=1+\epsilon^2\theta$)
used in \cite{CJ00}, carefully construct an approximate linear problem by using a cut-off function
and employ a mollifier technique to get the existence and uniqueness of solutions to \eqref{CL},
and utilize the lower order terms to control the higher order terms.
We should remark that in \cite{CJ00} the linearized problems are decoupled, while,
to obtain the uniform a priori estimates in this paper, the linearized problems are strongly coupled that
gives rise to more difficulties than the isentropic case in \cite{CJ00}.

Let us briefly explain the main steps of our proof. First, we construct the approximate linear
problem, derive the uniform estimates and employ a compactness argument to get
the existence of a weak solution to the system \eqref{CL} in Section 2.2.
Second, we exploit the property of the momentum equations and the regularity
of the Stokes problem to establish the estimates of $|\eta+\theta|$,
which, combined with an estimate for $\theta$, implies the boundedness for
$\eta$. Due to presence of boundary here, some difficulties involved
with controlling the boundary terms arise. To overcome such
difficulties, the crucial step is to get a $H^2$-bound of
${\rm div}\bu$ near the boundary, for which we shall adopt the local
isothermal coordinates used in \cite{Valli83, VZ86}. This
strategy has also been used in \cite{JO11} to study the
low Mach number limit of the compressible Navier-Stokes equations with non-slip
boundary condition. Then, summing up all the estimates for
$(\bv,\theta)$ and $\eta$, we can establish the desired a priori
uniform in $\epsilon$ estimates in view of the smallness of
$\epsilon$ (see Section 3). Finally, we apply the Tikhonov fixed
point theorem to obtain the existence of a strong solution.
Moreover, with the help of the uniform a priori estimates, one can
take the limit to show the incompressible limit.
We point out here that due to the splitting, we have to impose that the energy equation should
not possess an external heat source.

The rest of this paper is organized as follows. In the next section, we
prove the existence of weak solutions and regularity to the linearized incompressible and
compressible problems.
Section 3 is devoted to establishing the existence for the nonlinear
problem. Finally, the incompressible limit of solutions to the steady compressible heat-conductive
Navier-Stokes equations is presented in Section 4.

{\it Notations:} We denote by $L^2$ the Lebesgue space $L^2(\Omega)$ with
norm $\|\cdot\|_0$, by $H^m$ the Sobolev spaces $H^m(\Omega )$ with norm $\|\cdot\|_m$. Define the spaces
$$\bar H^m=\Big\{\rho\in H^m\; |\; \int_\Omega\rho (x)dx=0\Big\},\quad
H_{0,\sigma}^1=\Big\{\bu\in H_0^1\;|\; {\rm div}\bu=0\Big\},\quad H_{0,\sigma}^m:=H^m\cap H_{0,\sigma}^1.$$
We denote by $H^{-1}$ the dual space of $H_0^1$
with the dual product $\langle\cdot,\cdot\rangle$ and the norm
$\|\cdot\|_{-1}=\sup_{\|h\|_1=1}|\langle\cdot,h\rangle|.$ We shall use the abbreviation:
$$\int \,\cdot\; dx :=\int_\Omega \,\cdot\; dx. $$

\section{Existence of solutions to the linearized problem}
\newcounter{linear}
\renewcommand{\theequation}{\thesection.\thelinear}

We first split the system \eqref{1} into two parts, so that one part
looks like the incompressible Navier-Stokes equations, while the other part
behaves like the compressible Navier-Stokes equations. More precisely,
let $(U,P)$ and $(\bv,\eta)$ be the solutions to the
following systems, respectively: \stepcounter{linear}
\begin{equation}
\label{i1}
\left\{\begin{array}{llll}
U\cdot\nabla U+\bv\cdot\nabla U-\mu\triangle U+\nabla P=\f +\g,
\\[1mm]
{\rm div}\, U=0,\\[1mm]
U=0\ \ \ \  {\rm on}\ \ \partial\Omega\ \ \ {\rm and}\quad \dis{\int_\Omega Pdx=0};
\end{array}\right.
\end{equation}
and
\stepcounter{linear}
\begin{equation}
\label{c1}
\left\{\begin{array}{llll}
U\cdot\nabla \eta +\dis{\frac{{\rm div}\bv}{\epsilon} }
=-\bv\cdot\nabla\eta-\eta\,{\rm div}\bv-\epsilon\,{\rm div}(P(U+\bv)),\\[1mm]
 U\cdot\nabla \bv-\mu\triangle \bv-(\mu+\lambda)\nabla{\rm div}\bv
+ \dis{ \frac{\nabla \eta+\nabla\theta}{\epsilon} }
=\epsilon F-\bv\cdot\nabla \bv-\theta\nabla\eta-\eta\nabla\theta,\\[1mm]
U\cdot\nabla\theta-\kappa\triangle\theta +\dis{ \frac{{\rm div}\bv}{\epsilon} }
=\epsilon G-\bv\cdot\nabla\theta-\eta{\rm div}\bv-\theta{\rm div}\bv,\\[1mm]
\bv =0,\;\;\;\theta=0\;\; {\rm on}\;\;\partial\Omega\quad\; {\rm and}\quad \dis{\int_\Omega \eta dx =0},
\end{array}\right.
\end{equation}
where the new force $F$ and heat source $G$ are defined by
\begin{eqnarray*} &&
F=(\epsilon P+\eta)\f -(\epsilon P+\eta)(U+\bv)\cdot\nabla(U+\bv)-\theta\nabla P- P\nabla\theta , \\
&&
G=\Psi-(\epsilon P+\eta)(U+\bv)\cdot\nabla\theta-(\epsilon P+\eta)\theta{\rm div}\bv-P{\rm div}\bv .
\end{eqnarray*}

It is clear to observe that $\mathbf{u}:=U+\bv$, $\rho :=\epsilon P+\eta$ and $\theta$
are a solution to \eqref{1}. Thus, we can obtain a solution of the system \eqref{1}
if we can solve the systems \eqref{i1} and \eqref{c1}. First, we will give the existence
of weak solutions to the linearized incompressible problem \eqref{i1} and derive
a priori estimates of higher order derivatives of the unknowns $(U,P)$. Then, we shall show the existence
of weak solutions to the linearized compressible problem \eqref{c1} and establish uniform
estimates of higher order derivatives of the unknowns $(\eta, \bv, \theta)$.

In what follows, we assume that meas$(\Omega) =1$ without loss of generality.

\subsection{Linearized incompressible equations}

Let $\tilde U$ and $\tilde \bv$ be given functions satisfying
$\tilde U\in  H^4\cap H_{0,\sigma}^1$ and $\tilde \bv\in  H^3\cap H_0^1$. At first, we consider
the linearized equations to \eqref{i1} for given $\tilde U$ and $\tilde \bv$ as follows.
\stepcounter{linear}
\begin{equation}
\label{i1l}
\left\{\begin{array}{llll}
(\tilde U+\tilde \bv)\cdot\nabla U-\mu\triangle U+\nabla P=\h ,  \\[1mm]
{\rm div}U=0,\\[1mm]
U=0\;\; {\rm on}\;\;\partial\Omega\quad\; {\rm and}\quad\;\dis{\int_\Omega Pdx=0}.
\end{array}\right.
\end{equation}
where $\h=\f+\g.$

The problem \eqref{i1l} is a Stokes problem which is solvable for arbitrarily large forces.
In fact, \eqref{i1l} can be solved by using the Lax-Milgram theorem for small $\tilde \bv$,
and we can obtain the following existence result, the proof of which can be found, for example,
in \cite{CJ00}, and is therefore omitted here.
\begin{lemma}\label{lmI1}
Let $\h\in H^{-1}$, $\tilde U\in  H_{0,\sigma}^1$ and $\tilde \bv\in  H^3\cap H_0^1$. There exists a constant $a_0$ depending
only on $\mu$ and $\Omega$, such that if $\|\tbv\|_3< a_0$, then there exists a weak solution $(U,P)\in H_0^1\times \bar H^0$
 of \eqref{i1l}, satisfying
\stepcounter{linear}
\begin{eqnarray}   &&  \label{lml1-1}
\|U\|_{1}\le C_0\|\h\|_{-1}, \\[1mm]
 \stepcounter{linear}
&& \label{lml1-2} \|P\|_{0}\le C_1\|\h\|_{-1}(1+\|\h\|_{-1}),
\end{eqnarray}
where $C_0$ and $C_1$ are positive constants which depend only on $\Omega,\ \mu\ {\rm and}\ a_0$.
\end{lemma}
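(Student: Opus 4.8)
The plan is to solve the Stokes problem \eqref{i1l} by the standard Lax--Milgram approach on the space of divergence-free test functions, treating the given convective field $\tilde U+\tilde\bv$ as a fixed coefficient and absorbing its contribution into the bilinear form. First I would recast \eqref{i1l} in weak form: seek $U\in H^1_{0,\sigma}$ such that for every $\phi\in H^1_{0,\sigma}$,
\begin{equation*}
a(U,\phi):=\mu\int\nabla U:\nabla\phi\,\md x+\int\big((\tilde U+\tilde\bv)\cdot\nabla U\big)\cdot\phi\,\md x=\langle\h,\phi\rangle.
\end{equation*}
Because the test functions are solenoidal, the pressure term $\int\nabla P\cdot\phi\,\md x=-\int P\,{\rm div}\,\phi\,\md x$ drops out, so the pressure is temporarily eliminated and recovered afterwards via De Rham's theorem (the orthogonal complement of divergence-free gradients), with the normalization $\int_\Omega P=0$ fixing the constant and putting $P\in\bar H^0$.

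Next I would verify the hypotheses of Lax--Milgram. Continuity of $a$ on $H^1_{0,\sigma}$ follows from the regularity $\tilde U,\tilde\bv\in H^3\subset L^\infty$ together with Cauchy--Schwarz and Poincar\'e. Coercivity is the crucial point: the diffusion term gives $\mu\|\nabla U\|_0^2$, while the convective term must be shown to be small or sign-favorable. Since $\tilde U$ is divergence-free and $U$ vanishes on $\partial\Omega$, integration by parts yields $\int\big(\tilde U\cdot\nabla U\big)\cdot U\,\md x=0$, so $\tilde U$ contributes nothing to the diagonal. The remaining piece is $\int\big(\tilde\bv\cdot\nabla U\big)\cdot U\,\md x$, which I bound by $C\|\tilde\bv\|_3\|\nabla U\|_0^2$ using Sobolev embedding. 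Hence $a(U,U)\ge(\mu-C\|\tilde\bv\|_3)\|\nabla U\|_0^2$, and choosing $a_0$ so that $C\,a_0<\mu/2$ makes $a$ coercive whenever $\|\tilde\bv\|_3<a_0$; Poincar\'e then upgrades this to coercivity in the full $\|\cdot\|_1$ norm. Lax--Milgram thus produces a unique $U\in H^1_{0,\sigma}$.

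The two estimates follow by testing the weak formulation against appropriate functions. Taking $\phi=U$ and using the coercivity bound gives $(\mu/2)\|\nabla U\|_0^2\le\langle\h,U\rangle\le\|\h\|_{-1}\|U\|_1$, which after Poincar\'e yields \eqref{lml1-1} with $C_0$ depending only on $\mu,\Omega,a_0$. For the pressure estimate \eqref{lml1-2}, I would use the standard inf--sup (Ne\v{c}as) inequality $\|P\|_0\le C\,\|\nabla P\|_{-1}$ valid on $\bar H^0$, and read off $\nabla P=\h+\mu\triangle U-(\tilde U+\tilde\bv)\cdot\nabla U$ from the momentum equation. The linear terms are controlled by $\|\h\|_{-1}$ via \eqref{lml1-1}, while the convective term $(\tilde U+\tilde\bv)\cdot\nabla U$ is estimated in $H^{-1}$ by $C\|\tilde U+\tilde\bv\|_3\|U\|_1\le C\|\h\|_{-1}$, giving the factor $(1+\|\h\|_{-1})$ once the quadratic dependence from $\|U\|_1\lesssim\|\h\|_{-1}$ is tracked. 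The main obstacle I anticipate is not in any single step but in bookkeeping the $\epsilon$-uniformity and the precise constant dependence so that $C_0,C_1$ genuinely depend only on $\Omega,\mu,a_0$; since the paper defers this proof to \cite{CJ00}, the delicate part is really ensuring the smallness threshold $a_0$ is chosen independently of the data $\h$, which the coercivity argument above secures.
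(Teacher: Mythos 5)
Your route --- Lax--Milgram on $H_{0,\sigma}^1$ with the convective field $\tilde U+\tilde\bv$ frozen as a coefficient, the skew/smallness splitting of the trilinear term for coercivity, and De Rham plus the Ne\v{c}as inequality to recover $P$ --- is exactly the approach the paper intends (it omits the proof, citing Choe--Jin, but states that \eqref{i1l} is solved by Lax--Milgram for small $\tilde\bv$), and the existence part together with \eqref{lml1-1} is correct as written. The one place your bookkeeping does not close is the pressure estimate: you assert $\|(\tilde U+\tilde\bv)\cdot\nabla U\|_{-1}\le C\|\tilde U+\tilde\bv\|_3\|U\|_1\le C\|\h\|_{-1}$, but nothing in the hypotheses of the lemma bounds $\|\tilde U\|_3$ by a universal constant, and \eqref{lml1-2} cannot hold with $C_1$ depending only on $\Omega,\mu,a_0$ for an arbitrary $\tilde U\in H^4\cap H_{0,\sigma}^1$ (the coercivity bound on $U$ survives because $\tilde U$ drops out of the diagonal, but the pressure does not). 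The correct accounting is $\|(\tilde U+\tilde\bv)\cdot\nabla U\|_{-1}\le C(\|\tilde U\|_1+\|\tilde\bv\|_3)\|U\|_1\le C(C_0\|\h\|_{-1}+a_0)\,C_0\|\h\|_{-1}$, which is precisely where the factor $(1+\|\h\|_{-1})$ comes from; this requires the implicit standing hypothesis --- made explicit in Lemma \ref{lmI2} and built into the definition of $K_0$ --- that the input $\tilde U$ itself satisfies \eqref{lml1-1}. With that hypothesis stated, your argument is complete and matches the paper's.
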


As for the regularity of solutions, we consider the Stokes equations:
\begin{align*}
-\mu\triangle U+\nabla P&=\h-(\tilde U+\tilde \bv)\cdot\nabla U,\\
{\rm div}U&=0.
\end{align*}
Then we can derive the following estimates by employing bootstrap arguments similar to those
in \cite{CJ00}.
\begin{lemma}\label{lmI2}
Let $\h\in H^m,\ \tilde U\in H^{m+1}\cap H_{0,\sigma}^1,\ m=0,1,2,$ and $\tbv$ be the same as in Lemma \ref{lmI1}.
There are positive constants $C_2,\ C_3$ and $C_4$, depending only on $\Omega,\ \mu\ {\rm and}\ a_0$,
such that if $\tilde U\in H_0^1$ satisfies the inequality \eqref{lml1-1}, then
\stepcounter{linear}
\begin{equation}\label{lml2-1}
\|U\|_{2}+\|\nabla P\|_0\le C_2\|\h\|_{0}(\|\h\|_0+1)^4 .
\end{equation}
If $\tilde U\in H^2\cap H_0^1$ satisfies \eqref{lml2-1}, then
\stepcounter{linear}
\begin{equation}\label{lml2-2}
\|U\|_{3}+\|\nabla P\|_1\le C_3\|\h\|_{1}(\|\h\|_1+1)^8,
\end{equation}
and if $\tilde U\in H^3\cap H_0^1$ satisfies \eqref{lml2-2}, then
\stepcounter{linear}
\begin{equation}\label{lml2-3}
\|U\|_{4}+\|\nabla P\|_2\le C_4\|\h\|_{2}(\|\h\|_2+1)^{12}.
\end{equation}
\end{lemma}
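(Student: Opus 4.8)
The goal is to prove Lemma~\ref{lmI2}, which upgrades the weak solution $(U,P)$ from Lemma~\ref{lmI1} to higher Sobolev regularity via a bootstrap. The essential idea is to treat the convective term $(\tilde U+\tilde\bv)\cdot\nabla U$ as a \emph{known} right-hand side and apply the standard interior-plus-boundary regularity theory for the stationary Stokes system
\[
-\mu\triangle U+\nabla P=\h-(\tilde U+\tilde\bv)\cdot\nabla U,\qquad {\rm div}\,U=0,\qquad U|_{\partial\Omega}=0.
\]

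Let me sketch the proof plan.

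Consider what I would prove. The plan is to run the three estimates \eqref{lml2-1}--\eqref{lml2-3} in sequence, gaining one derivative at each stage. The master tool is the elliptic estimate for the Stokes problem: for $\mathbf{g}\in H^m$ there is a constant $C(\Omega,\mu)$ with $\|U\|_{m+2}+\|\nabla P\|_m\le C\|\mathbf{g}\|_m$, where $\mathbf{g}$ denotes the full right-hand side. The entire difficulty is therefore reduced to estimating $\|(\tilde U+\tilde\bv)\cdot\nabla U\|_m$ in terms of quantities already controlled, and then closing the resulting inequality.

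For the base step \eqref{lml2-1} ($m=0$), I would bound the convective term in $L^2$. Writing $\|(\tilde U+\tilde\bv)\cdot\nabla U\|_0\le\|\tilde U+\tilde\bv\|_{L^\infty}\|\nabla U\|_0$, and using the Sobolev embedding $H^2\hookrightarrow L^\infty$ (valid for $d=2,3$) together with $\|\tilde\bv\|_3<a_0$ from Lemma~\ref{lmI1}, I can estimate $\|\tilde U+\tilde\bv\|_{L^\infty}\lesssim\|\tilde U\|_2+a_0$. Since $\tilde U$ is assumed to satisfy \eqref{lml1-1}, i.e.\ $\|\tilde U\|_1\le C_0\|\h\|_{-1}\le C_0\|\h\|_0$, I should combine this with interpolation so that the $H^2$ norm of $\tilde U$ entering the estimate is itself controlled by $\|\h\|_0$; feeding everything back into the Stokes estimate and absorbing the lower powers yields the polynomial bound $C_2\|\h\|_0(\|\h\|_0+1)^4$. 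The inductive steps \eqref{lml2-2} and \eqref{lml2-3} are structurally identical: one differentiates the equation once more, applies the Stokes estimate at level $m=1$ then $m=2$, and controls $\|(\tilde U+\tilde\bv)\cdot\nabla U\|_m$ by the product (Moser/Gagliardo--Nirenberg) inequality $\|fg\|_m\lesssim\|f\|_m\|g\|_{L^\infty}+\|f\|_{L^\infty}\|g\|_m$, now using the \emph{previous} estimate (\eqref{lml2-1}, respectively \eqref{lml2-2}) as the induction hypothesis on $\tilde U$ and on $U$. Each bootstrap multiplies the previous polynomial degree, which explains the growth of the exponents $4\to8\to12$.

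The main obstacle will be bookkeeping the polynomial dependence on $\|\h\|_m$ so that it comes out exactly as stated, since each convective term is quadratic and each step reuses the bound from the step below, causing the degrees to compound; one must track how $\|\tilde U\|_{m+1}$ (controlled by the prior estimate) and $\|U\|_{m+1}$ interact and verify that Young's inequality lets one absorb all cross terms into a single power of $(\|\h\|_m+1)$. A secondary point requiring care is that the regularity of $\tilde U+\tilde\bv$ must be high enough at each stage to make the product estimate legitimate, which is exactly why the hypotheses demand $\tilde U\in H^{m+1}$ and $\tilde\bv\in H^3$; with $d\le3$ these embeddings are comfortably available, so no genuinely delicate borderline estimate arises. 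I would also note that, strictly speaking, the bound on $\tilde U$ rather than on $U$ in the hypotheses means the argument is a fixed-point--type estimate, and the stated conclusion is the resulting a priori bound on the \emph{new} iterate $U$.
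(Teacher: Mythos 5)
Your overall strategy coincides with the paper's: the authors give no written proof of Lemma \ref{lmI2}, but simply rewrite \eqref{i1l} as a Stokes system with right-hand side $\h-(\tilde U+\tilde\bv)\cdot\nabla U$ and invoke ``bootstrap arguments similar to those in \cite{CJ00}'', which is exactly the elliptic-regularity-plus-induction scheme you describe. The inductive steps $m=1,2$ of your plan are sound: there the hypothesis on $\tilde U$ is the conclusion of the previous level, so $\|\tilde U\|_2$ (hence $\|\tilde U\|_{L^\infty}$) is genuinely controlled by a polynomial in $\|\h\|$, and the Moser product estimate closes the argument.

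However, your base step $m=0$ as written would fail. For \eqref{lml2-1} the hypothesis is only $\tilde U\in H^{1}\cap H_0^1$ with $\|\tilde U\|_1\le C_0\|\h\|_{-1}$; there is no $H^2$ bound on $\tilde U$ available at this stage, so the estimate $\|\tilde U+\tilde\bv\|_{L^\infty}\lesssim \|\tilde U\|_2+a_0$ uses information you do not have, and no interpolation can manufacture an $H^2$ (or $L^\infty$) bound on $\tilde U$ from an $H^1$ bound alone --- interpolation needs a controlled upper endpoint. The standard repair, and the one implicit in \cite{CJ00}, is to place the derivative loss on $U$ rather than on $\tilde U$: estimate
\begin{equation*}
\|\tilde U\cdot\nabla U\|_0\le \|\tilde U\|_{L^6}\|\nabla U\|_{L^3}
\le C\,\|\tilde U\|_1\,\|U\|_1^{1/2}\|U\|_2^{1/2},
\end{equation*}
using $H^1\hookrightarrow L^6$ ($d\le 3$) and Gagliardo--Nirenberg, then absorb the factor $\|U\|_2^{1/2}$ into the left-hand side of the Stokes estimate by Young's inequality; the remaining factor $\|\tilde U\|_1^2\|U\|_1$ is then bounded through \eqref{lml1-1} and \eqref{lml1-2}, which is also where the extra powers of $\|\h\|_0$ in the exponent $4$ come from. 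The term $\tilde\bv\cdot\nabla U$ is harmless since $\|\tilde\bv\|_3<a_0$ gives an $L^\infty$ bound directly. With this correction your bookkeeping of the exponents $4\to 8\to 12$ proceeds as you indicate.
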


Let $\f ,\g\in H^2(\Omega)$, then it is obvious that $\h\in H^2(\Omega)$. We define a space $K_0$ by
\stepcounter{linear}
\begin{eqnarray}
K_0: =&&\Big\{U\in H_{0,\sigma}^4(\Omega):\; \|U\|_1\le C_1\|\h\|_1,\ \|U\|_{2}\le C_2\|\h\|_{0}(\|\h\|_0+1)^4,
\nonumber \\
&& \qquad \|U\|_{3}\le C_3\|\h\|_{1}(\|\h\|_1+1)^8,\ \|U\|_{4}\le C_4\|\h\|_{2}(\|\h\|_2+1)^{12}\Big\}.  \label{k0}
\end{eqnarray}

Thus, by Lemma \ref{lmI2} we see that the solution $U$ of the system \eqref{i1l}
also lies in $K_0$ for any given $\tilde U\in K_0$, since the constants $C_1,\cdots, C_4$ do not depend on  $\tilde U$.

\subsection{Linearized compressible equations}

Let $(\tilde U,\tbv,\tilde\theta)\in (H^4\cap H_{0,\sigma}^1)\times
(H^3\cap H_0^1)\times (H^3\cap H_0^1)$ be given functions.
Next, we give the existence of weak solutions and derive some a
priori estimates for solutions to the linearized equations of the
system (\ref{c1}). For simplicity, we only consider the
three-dimensional case. As aforementioned, we shall apply the
Tikhonov fixed point theorem to show the existence of strong
solutions to (\ref{1}), (\ref{bc}). To this end, for given
$(\tilde \bv,\tilde\theta)\in(H^3\cap H_0^1)\times(H^3\cap H_0^1)$, let $(\eta,\bv,\theta)$
be the unique solution of the following linearized system of (\ref{c1}) the existence of
which will be shown below:
\stepcounter{linear}
\begin{equation}
\label{CL}
\left\{\begin{array}{llll}
U\cdot\nabla \eta + \dis{\frac{{\rm div}\bv}{\epsilon} }+\tilde \bv\cdot\nabla\eta+\eta{\rm div}\tbv
=-\epsilon{\rm div}(P(U+\tilde \bv)),\\[1mm]
U\cdot\nabla \bv-\mu\triangle \bv-\zeta\nabla{\rm div}\bv+ \dis{\frac{\nabla \eta+\nabla\theta}{\epsilon} }+\tilde \theta\nabla\eta +\eta\nabla\tilde \theta
=\epsilon \tilde F-\tilde \bv\cdot\nabla \tilde \bv,\\[1mm]
U\cdot\nabla\theta-\kappa\triangle\theta+ \dis{\frac{{\rm div}\bv}{\epsilon} }+\eta{\rm div}\tbv
=\epsilon \tilde G-\tilde \bv\cdot\nabla\tilde\theta -\tilde \theta{\rm div}\tbv ,\\[1mm]
\bv=0,\ \ \ \ \theta=0\ \  {\rm on}\ \ \partial\Omega\ \ \ {\rm and}\quad \dis{\int_\Omega \eta=0},
\end{array}\right.
\end{equation}
where $(U,P)$ be the solution of \eqref{i1l} established in Section 2.1 and the new force $\tilde F$ and heat source $\tilde G$ are defined by
\begin{eqnarray*} &&
\tilde F=(\epsilon P+\eta)\f -(\epsilon P+\eta)(U+\tilde \bv)\cdot\nabla(U+\tilde \bv)
-\tilde \theta\nabla P- P\nabla\tilde \theta, \\[1mm]
&& \tilde G=\tilde \Psi-(\epsilon P+\eta)(U+\tilde \bv)\cdot\nabla\tilde \theta
-(\epsilon P+\eta)\tilde\theta{\rm div}\tbv-P{\rm div}\tbv,
\end{eqnarray*}
with $\tilde \Psi=2\mu D(U+\tilde \bv): D(U+\tilde \bv)+\lambda({\rm div}(U+\tilde \bv))^2$
and $\zeta=\mu+\lambda$.

Thus, for given $\tilde U, \tilde \bv$ and $\tilde\theta$, we can construct a map $N$:
$$ N(\tilde U, \tilde \bv, \tilde\theta):=(U, \bv, \theta).$$
And, we have to show that $N$ maps some space into itself and is weak continuous to get a fixed point of the mapping $N$.

In order to obtain the existence of weak solutions to \eqref{CL}, we set
\begin{eqnarray*} &&
\tilde F'=\epsilon P\f -\epsilon P(U+\tilde \bv)\cdot\nabla(U+\tilde \bv)
-\tilde \theta\nabla P- P\nabla\tilde \theta, \\[1mm]
&& \tilde G'=\tilde \Psi-\epsilon P(U+\tilde \bv)\cdot\nabla\tilde \theta
-\epsilon P\tilde\theta{\rm div}\tbv-P\,{\rm div}\tbv.
\end{eqnarray*}
So, $\tilde F=\tilde F'+\eta \f+\eta(U+\tilde\bv)\cdot\nabla(U+\tbv),\ \ \tilde G=\tilde G'-\eta(U+\tbv)\cdot\nabla\tilde\theta-\eta\tilde\theta{\rm div}\tbv$.

\subsubsection{Existence of weak solutions}
\begin{lemma}\label{lmce}{
Let $\tilde F',\tilde G'\in H^{-1}$}, $\f\in H^2$, and $(U,P)$ be a solution of \eqref{i1l}
established in Lemma \ref{lmI1}. If $\|\tbv\|_3+\|\tilde\theta\|_3$ is
sufficiently small, then there exists a unique weak solution
$(\eta,\bv,\theta)\in\bar{H}^0\times{H}_0^1\times{H}_0^1$ to the problem \eqref{CL}.
\end{lemma}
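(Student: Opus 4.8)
The plan is to obtain the solution by an elliptic regularization of the density equation, to solve the regularized problem with the Lax--Milgram theorem, and then to pass to the limit; uniqueness will follow from a linear energy estimate. The structural fact driving the whole argument is the cancellation of the singular $O(1/\epsilon)$ terms: pairing the three equations of \eqref{CL} with $\eta$, $\bv$ and $\theta$ respectively and adding, one has
$$\frac{1}{\epsilon}\int\eta\,{\rm div}\bv\,\md x+\frac{1}{\epsilon}\int\theta\,{\rm div}\bv\,\md x+\frac{1}{\epsilon}\int(\nabla \eta+\nabla\theta)\cdot\bv\,\md x=0,$$
since the last integral equals $-\tfrac{1}{\epsilon}\int(\eta+\theta)\,{\rm div}\bv\,\md x$. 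Thus the large terms disappear from the basic energy identity, leaving the coercive contributions $\mu\|\nabla\bv\|_0^2+\zeta\|{\rm div}\bv\|_0^2$ and $\kappa\|\nabla\theta\|_0^2$, together with $\int(U\cdot\nabla \eta)\eta=\int(U\cdot\nabla\bv)\cdot\bv=\int(U\cdot\nabla\theta)\theta=0$, which hold because ${\rm div}\,U=0$ and $U=0$ on $\partial\Omega$ by Lemma \ref{lmI1}.

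Note first that, after isolating the data $\tilde F',\tilde G'\in H^{-1}$, the system is \emph{linear} in $(\eta,\bv,\theta)$: the remaining $\eta$-dependent pieces of $\epsilon\tilde F$ and $\epsilon\tilde G$, namely $\epsilon\eta\f$, $\epsilon\eta(U+\tbv)\cdot\nabla(U+\tbv)$, $-\epsilon\eta(U+\tbv)\cdot\nabla\tilde\theta$ and $-\epsilon\eta\tilde\theta{\rm div}\tbv$, together with the transport and coupling terms $\tilde\bv\cdot\nabla\eta$, $\eta\,{\rm div}\tbv$, $\tilde\theta\nabla\eta$ and $\eta\nabla\tilde\theta$, are moved into the operator. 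I would then regularize the first equation by adding the artificial viscosity $-\delta\triangle\eta$ ($\delta>0$) with the Neumann condition $\partial_n\eta=0$, which preserves $\int_\Omega\eta=0$ and renders the system uniformly elliptic in $(\eta,\bv,\theta)\in\bar H^1\times H_0^1\times H_0^1$. Writing the weak formulation and defining the associated bilinear form by pairing with test functions $(\xi,\bw,\tau)$, boundedness is immediate from Sobolev embeddings together with $\f\in H^2$ and the regularity of $(U,P)$. For coercivity at fixed $\delta$ the diffusive terms supply $\delta\|\nabla\eta\|_0^2+\mu\|\nabla\bv\|_0^2+\zeta\|{\rm div}\bv\|_0^2+\kappa\|\nabla\theta\|_0^2$ (and, through the Poincar\'e inequality on the mean-zero $\eta$, control of $\|\eta\|_0$), while all the coupling terms listed above carry a factor $\epsilon$, $\|\tbv\|_3$ or $\|\tilde\theta\|_3$ and are absorbed under the smallness hypothesis. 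Lax--Milgram then yields a unique weak solution $(\eta_\delta,\bv_\delta,\theta_\delta)$ of the regularized problem.

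The substantial part is to derive bounds \emph{uniform in} $\delta$. Testing against $(\eta_\delta,\bv_\delta,\theta_\delta)$ and invoking the cancellation above gives $\sqrt\delta\,\|\nabla\eta_\delta\|_0+\|\bv_\delta\|_1+\|\theta_\delta\|_1\le C$ with $C$ independent of $\delta$, but this does \emph{not} control $\|\eta_\delta\|_0$, because the density equation carries no intrinsic diffusion and the $\delta\|\nabla\eta\|_0^2$-coercivity degenerates as $\delta\to0$. This $\delta$-uniform $L^2$-bound on $\eta_\delta$ is the main obstacle, and I would obtain it from the momentum equation: solving ${\rm div}\,\Phi=\eta_\delta$ with $\Phi\in H_0^1$ and $\|\Phi\|_1\le C\|\eta_\delta\|_0$ by the Bogovskii operator, and testing $\eqref{CL}_2$ by $\Phi$, the pressure term becomes $\tfrac1\epsilon\int(\nabla\eta_\delta+\nabla\theta_\delta)\cdot\Phi=-\tfrac1\epsilon\big(\|\eta_\delta\|_0^2+\int\theta_\delta\eta_\delta\big)$, so that $\|\eta_\delta\|_0$ is estimated by $\|\theta_\delta\|_0$ and $\epsilon$ times the already-controlled first-order norms. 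With $\|\eta_\delta\|_0+\|\bv_\delta\|_1+\|\theta_\delta\|_1\le C$ in hand, weak compactness lets me extract a limit as $\delta\to0$; the regularizing term vanishes since $\|\delta\triangle\eta_\delta\|_{-1}\le\sqrt\delta\,(\sqrt\delta\,\|\nabla\eta_\delta\|_0)\to0$.

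Finally, the limit $(\eta,\bv,\theta)\in\bar H^0\times H_0^1\times H_0^1$ is a weak solution of \eqref{CL}. For uniqueness I would use linearity: the difference of two solutions solves the homogeneous system, and applying the same energy identity the singular terms cancel, the $U$-convection terms vanish, and the coupling terms are absorbed by the viscous coercivity under the smallness of $\|\tbv\|_3+\|\tilde\theta\|_3$, together with the Bogovskii estimate to close the control of the density difference; this forces the difference to be zero. I expect the $\delta$-uniform bound on $\eta$ to remain the delicate point throughout, since it is exactly the absence of diffusion in the density equation that is compensated only through its coupling to the pressure in the momentum balance.
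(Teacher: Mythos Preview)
Your overall plan---regularize the density equation by $-\delta\triangle\eta$ with Neumann data, solve by Lax--Milgram, pass $\delta\to0$, and prove uniqueness by a linear energy estimate---matches the paper's. The cancellation of the $1/\epsilon$ terms that you isolate is also exactly the mechanism used there. The difference, and the place where your argument has a genuine gap, is the source of the $\delta$\emph{-independent} control of $\|\eta\|_0$.

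In your coercivity step you say the coupling terms ``carry a factor $\epsilon$, $\|\tbv\|_3$ or $\|\tilde\theta\|_3$ and are absorbed under the smallness hypothesis.'' But the term coming from $\tbv\cdot\nabla\eta+\eta\,{\rm div}\tbv$ in $\eqref{CL}_1$, after testing with $\eta$ and integrating by parts, is $\tfrac12\int\eta^2\,{\rm div}\tbv\,\md x$, of size $\|\tbv\|_3\|\eta\|_0^2$. The only positive $\eta$-contribution in your bilinear form is $\delta\|\nabla\eta\|_0^2\ge c\delta\|\eta\|_0^2$ via Poincar\'e. Absorption therefore forces $\|\tbv\|_3\lesssim\delta$, so Lax--Milgram applies only for $\delta$ above a fixed positive threshold determined by $\|\tbv\|_3$; you cannot send $\delta\to0$ with a single smallness hypothesis on $\tbv,\tilde\theta$. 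Your Bogovskii test against $\Phi$ with ${\rm div}\,\Phi=\eta_\delta$ does recover $\|\eta_\delta\|_0$ \emph{after} a solution exists, but it is a separate test function and does not enter the bilinear form, so it cannot rescue the coercivity needed to produce the solution in the first place.

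The paper fixes precisely this by building the pressure recovery into the system before regularizing: it applies $\triangle^{-1}{\rm div}$ to the momentum equation, obtaining a scalar identity for $\eta+\theta+\epsilon\eta\tilde\theta+\cdots$, and \emph{adds} this identity to both the density and temperature equations to form the modified system \eqref{CL'}. In the resulting bilinear form the diagonal test produces the extra term $\int(\eta+\theta)^2\,\md x$, and together with $\tfrac{\kappa}{2}\|\nabla\theta\|_0^2\ge\tfrac{\kappa}{2c_0}\|\theta\|_0^2$ one gets
\[
\int(\eta+\theta)^2\,\md x+\frac{\kappa}{2c_0}\|\theta\|_0^2\ \ge\ \frac{\kappa}{2c_0+\kappa}\,\|\eta\|_0^2,
\]
which is $\delta$-independent coercivity on $\|\eta\|_0^2$. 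With that in hand the cross term $\|\tbv\|_3\|\eta\|_0^2$ is absorbed by smallness of $\|\tbv\|_3$ alone, Lax--Milgram applies for every $\delta>0$, and the a priori bound $\|\eta^\delta\|_0+\|\bv^\delta\|_1+\|\theta^\delta\|_1\le C$ is uniform in $\delta$ directly from the coercivity estimate---no separate Bogovskii step is needed. Your Bogovskii idea is morally the same device (extract $\|\eta\|_0$ from the pressure gradient in the momentum balance); the point is that it must be incorporated \emph{into} the bilinear form, not applied afterwards, and the $\triangle^{-1}{\rm div}$ rewriting is one clean way to do that.

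For uniqueness your sketch is essentially what the paper does: the paper combines the basic energy identity for the difference with a Stokes estimate on the momentum equation (playing the role of your Bogovskii test) to bound $\|\overline\eta+\overline\theta\|_0$, and then closes under smallness of $\epsilon,\|\tbv\|_3,\|\tilde\theta\|_3$. Either Stokes or Bogovskii works here, since at this stage one already has a solution to test against.
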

\begin{remark}
We remark here that a weak solution of (\ref{CL}) is defined through testing the system (\ref{CL})$_1$--(\ref{CL})$_3$ by
$\phi\in H^1$, $\mathbf{V}\in H^1_0$ and $\Theta\in H^1_0$, respectively.

\end{remark}
\begin{remark}
In \cite{PRS09} the authors used the Bergman projection to reduce the linearized problem to a boundary value problem
for the transport operator equation and then proved the existence of strong solutions and the incompressible limit as
$\omega={\rm Re}/(\gamma {\rm Ma}^2)$ goes to $\infty$. Here we shall employ the techniques of approximate systems,
the Leray-Schauder fixed theorem and compactness arguments and mollifiers to show the existence and uniqueness
of weak solutions directly.
\end{remark}
\begin{proof}
The proof is broken up into three steps.

\emph{Step 1: Construction of strong solutions to the approximate problems.}

We first suppose that $\tilde F',\tilde G'\in L^{2}$ and the case $\tilde F',\tilde G'\in H^{-1}$ will then be dealt with
by a density argument later.  Define the cut-off function
$$1_\alpha(\eta)=\left\{
   \begin{array}{ll}
    1/\alpha, & \hbox{ if }\eta>1/\alpha; \\
      \eta, & \hbox{ if }\eta\leq 1/\alpha.
    \end{array}
  \right.
$$
Now, we consider the following approximate system to (\ref{CL}) for arbitrary but fixed positive numbers $\beta$, $\alpha$:
\stepcounter{linear}
\begin{equation}
\label{CLa}
\left\{\begin{array}{llll}
-\beta\Delta \eta = - \mathrm{div}(U\eta)- \dis{\frac{{\rm div}\bv}{\epsilon} }-\mathrm{div}[1_\alpha(\eta)\tbv]
-\epsilon{\rm div}(P(U+\tbv))=:\mathrm{div}R_1(\eta,\mathbf{v}),\\[3mm]
-\mu\triangle \bv -\zeta\nabla{\rm div}\bv=-
 \dis{\frac{\nabla ( \eta + \theta)}{\epsilon} }-U\cdot\nabla \bv-\nabla[\tilde \theta 1_\alpha(\eta)]\\[2mm]
\quad+\epsilon [\tilde F^{'}+1_\alpha(\eta)\mathbf{f}+1_\alpha (\eta)(U+\tbv)
\cdot\nabla(U+\tbv)]-\tilde \bv\cdot\nabla \tilde \bv =:R_2(\eta,\mathbf{v},\theta), \\[3mm]
-\kappa\triangle\theta= -U\cdot\nabla \theta-
\dis{\frac{{\rm div}\bv}{\epsilon} }- 1_\alpha(\eta){\rm div}\tbv  \\[2mm]
\quad
+\epsilon\{\tilde G^{'}- 1_\alpha(\eta)[(U+\tbv)\cdot\nabla\tilde\theta+\tilde\theta{\rm div}\tbv]\}
-\tilde \bv\cdot\nabla\tilde\theta -\tilde \theta{\rm div}\tbv:= R_3(\eta,\mathbf{v},\theta),\\[3mm]
\displaystyle\bv =0,\ \ \ \ \theta =0,\ \  \
\frac{\partial \eta}{\partial \mathbf{n}}=0
\quad {\rm on}\ \ \partial\Omega,\ \ \quad{\rm and }\ \ \dis{\int_\Omega\eta dx =0},
\end{array}\right.
\end{equation}
where $\mathbf{n}$ denotes the outer normal vector of the boundary $\partial \Omega$. We point out that the introduction of
the cut-off function makes it possible to establish an important estimate \eqref{fsdf14f} below.

Next, we show the strong solvability of \eqref{CLa} by employing the Leray-Schauder fixed point theorem.
To this end, we first study the following boundary problem:
 \stepcounter{linear}\begin{equation}
 \label{Appdasdf01}
\left\{\begin{array}{llll}
-\beta\Delta \eta
={\rm div}R_1(t\chi,t\mathbf{u}),\\[1mm]
-\mu\triangle \bv -\zeta\nabla{\rm div}\bv
=R_2(t\chi,t\mathbf{u},t\vartheta),\\[1mm]
-\kappa\triangle\theta=R_3(t\chi,t\mathbf{u},t\vartheta),\\[1mm]
\displaystyle\bv =0,\ \ \ \ \theta =0,\ \  \
\frac{\partial \eta}{\partial \mathbf{n}}=0
\quad
{\rm on}\ \ \partial\Omega,\ \ \quad{\rm and }\ \ \dis{\int_\Omega \eta  =0}
\end{array}\right.
\end{equation}
 for given $t\in [0,1]$ and $({\chi},\mathbf{u}, \vartheta)
\in\bar{H}^1\times H^1_0\times H^1_0$.

 Recalling that $(\tilde U,\tbv,\tilde\theta)\in (H^4\cap H_{0,\sigma}^1)\times
(H^3\cap H_0^1)\times (H^3\cap H_0^1)$
 and $1_\alpha(\eta)\in H^1$,
by  the elliptic theory (see \cite[Lemma 4.32 and Lemma 4.27]{NS04}),
we know that there exits a unique strong solution $(\eta, \mathbf{v},\theta)\in H^2$ satisfying \eqref{Appdasdf01}
and
 \stepcounter{linear}\begin{equation}
 \label{Apdsdf01asfa}
\|( \eta,\theta,\mathbf{v})\|_2\leq C ( \|\mathrm{div}R_1(t\chi,t\mathbf{u})\|_0 +
\|(R_1(t\chi,t\mathbf{u}),R_2(t\chi,t\mathbf{u},t\vartheta),R_3(t\chi,t\mathbf{u},t\vartheta))\|_0).
\end{equation}

Then, we derive estimates for strong solutions $(\eta,\mathbf{v},\theta)$ of the problem:
 \stepcounter{linear} \begin{equation}    \label{Appro2}
\left\{\begin{array}{llll}
-\beta\Delta \eta
={\rm div}R_1(t\eta,t\mathbf{v}),\\[1mm]
-\mu\triangle \bv -\zeta\nabla{\rm div}\bv
=R_2(t\eta,t\mathbf{v},t\theta),\\[1mm]
-\kappa\triangle\theta = R_3(t\eta,t\mathbf{v},t\theta),\\[1mm]
\displaystyle\bv =0,\quad \theta =0,\quad \frac{\partial \eta}{\partial \mathbf{n}}=0
\;\;{\rm on }\; \partial\Omega, \quad{\rm and }\; \dis{\int_\Omega \eta  =0},\quad\; t\in [0,1].
\end{array}\right.
\end{equation}

Using $L^2$ energy estimates, we can easily obtain the following identities on $(\eta,\mathbf{v},\theta)$:
\stepcounter{linear}
\begin{eqnarray}
&& \int\mu|\nabla\bv|^2+\zeta({\rm div}\bv)^2dx
=\frac{1}{\epsilon}\int t(\eta+\theta){\rm div}\bv dx+\int  1_\alpha(t\eta)\tilde\theta{\rm div}\bv dx \nonumber\\
&&\qquad +\int \{\epsilon [\tilde F^{'}+1_\alpha(t\eta)\mathbf{f}+1_\alpha (t\eta)(U+\tbv)
\cdot\nabla(U+\tbv)]-\tilde \bv\cdot\nabla \tilde \bv\} \bv dx,\label{djy1}   \end{eqnarray}
\stepcounter{linear}
\begin{equation}
\beta\int|\nabla \eta|^2dx+\frac{1}{\epsilon}\int\eta{\rm div}\bv dx
=-\epsilon\int{\rm div}(P(U+\tbv))\eta dx-\frac12\int \eta1_\alpha(t\eta){\rm div}\tbv dx,\label{djy2} \end{equation}
\stepcounter{linear}
\begin{eqnarray}
&& \int\kappa|\nabla\theta|^2dx+\frac{1}{\epsilon}\int\theta{\rm div}\bv dx
=\epsilon\int\tilde{G^{'}}\theta dx-\epsilon\int(1_\alpha( t\eta)(U+\tbv)\cdot\nabla\tilde\theta
+1_\alpha(t\eta)\tilde\theta{\rm div}\tbv)\theta dx\nonumber\\[1mm]
&& \qquad -\int(\tbv\cdot\nabla\tilde\theta+\tilde\theta{\rm div}\tbv)\theta dx-\int1_\alpha(t\eta)\theta{\rm div}\tbv dx.\label{djy3}
\end{eqnarray}
Keeping in mind that $t\leq 1$, $\|1_\alpha(\eta)\|_0^2\leq |\Omega|/\alpha$ and $\|\eta\|_0\leq C \|\nabla \eta\|_0$,
summing up the identities \eqref{djy1}--\eqref{djy3}, we deduce the following estimate:
 \stepcounter{linear}\begin{equation}
 \label{fsdf14f}\|(\eta ,\mathbf{v} ,\theta )\|_1 <K \equiv K(1+\|(\tilde{F}',\tilde{G}')\|_0 +\|\mathbf{f}\|_1
 +\|(U,\tilde{\mathbf{v}},\tilde{\theta},P)\|_2^2),\end{equation}
where the constant $K$ may depend on other known quantities, but is independent of $t$.

Now for given $({\chi},\mathbf{u}, \vartheta)\in\bar{H}^1\times H^1_0\times H^1_0$, we define a mapping $T_t$ by
 $$ T_t(\chi,\mathbf{u},\vartheta):=(\eta_t,\mathbf{v}_t,\theta_t)\;\mbox{ that is the strong solution of \eqref{Appdasdf01}}.$$
 Then the solution operator $T_t$ enjoys the following properties (in what follows, $C$ denotes a general constant independent of $t$):
\begin{itemize}
  \item  By virtue of \eqref{fsdf14f},
\stepcounter{linear}
\begin{equation}
   0\not\in (I-T_t)(\partial B_{K}),
\end{equation}
where
$$B_K=\Big\{(\eta ,\mathbf{v},\theta )\in H^1\times H_0^1\times H^1_0~\bigg|~\|(\eta ,\mathbf{v},\theta)\|_1 <K,
\quad \int_\Omega \eta=0 \Big\}.$$
  \item Let $B$ be a ball of center $0$ in $\bar{H}^1\times H^1_0\times H^1_0$,
then for any $t\in [0,1]$, $T_tB$ is a precompact set of $\bar{H}^1\times H^1_0\times H^1_0$, since one has by \eqref{Apdsdf01asfa} that
$$\|T_t(\chi,\mathbf{u},\vartheta)\|_2\leq C(\|(\chi,\mathbf{u},\vartheta)\|_1 +1)
(1+\|(\tilde{F}',\tilde{G}')\|_0 +\|\mathbf{f}\|_1 +\|(U,\tilde{\mathbf{v}},\tilde{\theta},P)\|_2^2).$$
\item For any $t,s\in [0,1]$, one has by the elliptic regularity theory (see \cite[Lemma 4.32 and Lemma 4.27]{NS04}) that
\stepcounter{linear}\begin{equation}
\begin{aligned}
&\|T_t(\chi,\mathbf{u},\vartheta)-T_s(\chi,\mathbf{u},\vartheta)\|_1\leq C(\|R_1(t\chi,t\mathbf{u})-R_1(s\chi,s\mathbf{u})\|_0 \\
& \qquad +\|(R_2(t\chi,t\mathbf{u},t\vartheta),R_3(t\chi,t\mathbf{u},t\vartheta))
-(R_2(s\chi,s\mathbf{u},s\vartheta),R_3(s\chi,s\mathbf{u},s\vartheta))\|_{-1}  \\
 &\leq C|t-s|\, \|(\xi,\mathbf{u},\vartheta)\|_1\big( 1+\|(U,\tilde{\mathbf{v}}, \tilde{\theta},P)\|_2^2\big).
\end{aligned}
\end{equation}
\end{itemize}

Consequently, we can applying the Leray-Schauder fixed point theorem \cite[Section 1.4.11.7]{NS04} to \eqref{Appdasdf01} to
get a strong solution $(\eta, \mathbf{v},\theta)$ of the approximate system \eqref{CLa}. In what follow we write such
strong solution by $(\eta_{\beta}^\alpha, \mathbf{v}_{\beta}^\alpha,\theta_{\beta}^\alpha)$ to indicate the dependence upon
$\alpha$ and $\beta>0$.
\vspace{2mm}

\emph{Step 2: Uniform-in-$(\alpha,\beta)$ estimates and the limit process}

Recalling that the strong solutions $(\eta_{\beta}^\alpha, \mathbf{v}_{\beta}^\alpha,\theta_{\beta}^\alpha)$
constructed above satisfy \eqref{djy1}--\eqref{djy3} for $t=1$, and
the fact $\left|1_{\alpha}(\eta_{\beta}^\alpha)\right|\leq |\eta_{\beta}^\alpha|$, we find that
\stepcounter{linear}
\begin{eqnarray}
&&\|\bv_{\beta}^\alpha\|_1^2+\|\theta_{\beta}^\alpha\|_1^2+\beta\|\nabla \eta_{\beta}^\alpha\|^2_0
\le C \|\eta_{\beta}^\alpha\|_0^2 \Big[\|\tilde\theta\|_2 +\epsilon(\|\f\|_1 +\|U+\tbv\|_2^4)  \nonumber \\
&& \;\; +\epsilon\|\tbv\|_3 +\epsilon\|\tilde\theta\|_2^2(\|U+\tbv\|_{H^1}^2+\|\tbv\|_2^2)
 +\|\tbv\|_2^2\Big] +C\epsilon\|\eta_{\beta}^\alpha\|_0\|P\|_2\|U+\tbv\|_2  \nonumber   \\
&& \;\; +C\epsilon(\|\tilde{F^{'}}\|_{-1}^2 +\|\tilde{G^{'}}\|_{-1}^2)
+C\|\tbv\|_1^4 +C\|\tbv\|_1^2 \|\tilde\theta\|_1^2,   \label{djy4}
\end{eqnarray}
provided $\|\tbv\|_3+\|\tilde\theta\|_3$ and $\epsilon$ are sufficiently small. We remark that throughout the proof
of Lemma 2.3, the smallness of $\|\tbv\|_3+\|\tilde\theta\|_3$ and $\epsilon$ are independent of $\alpha$  and $\beta$.

We proceed to bound $\|\eta_{\beta}^\alpha\|_0$. According to the content on page 123 in \cite{PRS09},
for a function $\xi\in L^2(\Omega)$ with $\int\xi dx=0$,
one can choose a vector field $q\in H_0^1(\Omega)$, such that
$${\rm div}\, q=\xi,\quad\; \|q\|_1\le C(\Omega)\|\xi\|_0.$$
Multiplying $\eqref{CL}_2$ by this $q$ and integrating the resulting equation over $\Omega$, we obtain
\begin{eqnarray}
\frac{1}{\epsilon}\int(&&\eta_{\beta}^\alpha+\theta_{\beta}^\alpha){\rm div}q dx =-\int(\eta_{\beta}^\alpha\tilde\theta){\rm div}q dx-\epsilon\int[\tilde{F^{'}}+\eta_{\beta}^\alpha f+\eta_{\beta}^\alpha(U+\tbv)\cdot\nabla(U+\tbv)]\cdot q dx\nonumber\\[1mm]
&&+\int\tbv\cdot\nabla\tbv\cdot q dx+\int(U\cdot\nabla\bv)\cdot q dx
+\int\mu\nabla\bv\cdot\nabla q+\lambda{\rm div}\bv\cdot{\rm div} q dx ,\nonumber
\end{eqnarray}
which, by taking $\xi=\eta_{\beta}^\alpha+\theta_{\beta}^\alpha
-\frac{1}{|\Omega|}\int\theta_{\beta}^\alpha dx$ and applying Young's inequality, gives
\stepcounter{linear}
\begin{eqnarray}
\Big\|\eta_{\beta}^\alpha+\theta_{\beta}^\alpha - \frac{1}{|\Omega|}\int\theta_{\beta}^\alpha dx\Big\|_0^2
\le &&C\epsilon(\|\bv_{\beta}^\alpha\|_{H_0^1}^2\|U\|_1^2 + \|\eta_{\beta}^\alpha\|_0^2\|\tilde\theta\|_2^2
+\|\bv_{\beta}^\alpha\|_2^2 +\|\tbv\|_2^4)   \nonumber \\[1mm]
&& +C\epsilon^2[\|\tilde F\|_{-1}^2 +\|\eta_{\beta}^\alpha\|_0^2(\|\f\|_1^2 +\|U+\tbv\|_2^4)]. \label{djy5}
\end{eqnarray}
It is easy to see that
\stepcounter{linear}
\begin{eqnarray}
\Big\|\frac{1}{|\Omega|}\int\theta_{\beta}^\alpha dx\Big\|_0^2=\frac{1}{|\Omega|}
\Big(\int\theta_{\beta}^\alpha dx\Big)^2\le\|\theta_{\beta}^\alpha\|_0^2\label{djy6}
\end{eqnarray}
Hence, from \eqref{djy4}--\eqref{djy6} and Poincar\'e's inequality we get
\stepcounter{linear}
\begin{eqnarray}
&& \|\eta_{\beta}^\alpha\|_0^2\leq 2\|\eta_{\beta}^\alpha+\theta_{\beta}^\alpha\|_0^2
+2\|\theta_{\beta}^\alpha\|_0^2   \nonumber  \\
&& \le \Big\|\eta_{\beta}^\alpha+\theta_{\beta}^\alpha - \frac{1}{|\Omega|}\int\theta_{\beta}^\alpha dx\Big\|_0^2
+\Big\|\frac{1}{|\Omega|}\int\theta_{\beta}^\alpha dx\Big\|_0^2 +2\|\theta_{\beta}^\alpha\|_0^2 \nonumber\\[1mm]
&& \le C\epsilon\big(\|\bv_{\beta}^\alpha\|_0^2\|U\|_1^2+
\|\eta_{\beta}^\alpha\|_0^2\|\tilde\theta\|_2^2+\|\bv_{\beta}^\alpha\|_1^2+\|\tbv\|_1^4\big)\nonumber\\[1mm]
&& \quad +C\epsilon^2\big[\|\tilde F\|_{-1}^2+\|\eta_{\beta}^\alpha\|_0^2(\|\f\|_1^2+ \|U+\tbv\|_2^4)\big]\nonumber\\[1mm]
&& \quad +\|\eta_{\beta}^\alpha\|_0^2 C\big[\|\tilde\theta\|_2+\epsilon(\|\f\|_1 +\|U+ \tbv\|_2^4)+\epsilon\|\tbv\|_3 \nonumber\\[1mm]
&& \quad +\epsilon\|\tilde\theta\|_2^2(\|U+\tbv\|_1^2+\|\tbv\|_2^2)+\|\tbv\|_2^2\big]
+C\epsilon\|\eta_{\beta}^\alpha\|_0 \|P\|_2 \|U+\tbv\|_2  \nonumber\\[1mm]
&& \quad +C\epsilon(\|\tilde{F^{'}}\|_{-1}^2 +\|\tilde{G^{'}}\|_{-1}^2)
 +C\|\tbv\|_1^4 +C\|\tbv\|_1^2 \|\tilde\theta\|_1^2.   \label{djy7}
\end{eqnarray}

If we combine \eqref{djy4} with \eqref{djy7}, we see that there is a small constant $\epsilon_1>0$, such that for any $\epsilon\le\epsilon_1$
and sufficiently small $\|(\tbv,\tilde\theta)\|_3$,
\stepcounter{linear} \begin{eqnarray}\label{jjysosn}
&&\|\eta_{\beta}^\alpha\|_{\bar{H}^0}^2+
\|\bv_{\beta}^\alpha\|_0^2+\|\theta_{\beta}^\alpha\|_1^2+\beta\|\nabla \eta_{\beta}^\alpha\|_0\le C_1,  \label{djy8}
\end{eqnarray}
where $C_1$ is a positive constant independent of $\epsilon$, ${\alpha}$ and $\beta$.
Applying the elliptic regularity theory
to \eqref{CLa}$_1$, we have
$$\|\eta_{\beta}^\alpha\|_2 \leq C(\beta)\|(\mathrm{div}R_1(\eta_{\beta}^\alpha,\mathbf{v}_{\beta}^\alpha),
R_1(\eta_{\beta}^\alpha,\mathbf{v}_{\beta}^\alpha))\|_0\leq C\equiv C(\beta,C_1),$$
where the constant $C$ is independent of $\alpha$. Therefore, by the embedding theorem $H^2\hookrightarrow L^\infty$, we see that $$1_{\alpha}({\eta_{\beta}^\alpha})={\eta_{\beta}^\alpha},$$
if $\alpha$ is sufficiently small while $\beta$ is fixed.

With the help of this fact and the uniform estimate \eqref{jjysosn}, we can take limit as $\alpha\to 0$
(while keeping $\beta$ fixed) in \eqref{CLa} and use the standard weak convergence arguments to obtain a weak solution
$(\eta_{\beta},\mathbf{v}_{\beta},\theta_{\beta})$ of the problem \eqref{CLa} with $1$ in place of $1_\alpha$, and moreover
by (\ref{djy8}), we have
$$\|\eta_{\beta}\|_0^2+\|\bv_{\beta}\|_1^2+\|\theta_{\beta}\|_1^2+\beta\|\nabla \eta_{\beta}\|_0 \le C_1.$$
Recalling that the above estimate is uniform in $\beta$, we can thus take limit once more as $\beta\to 0$ in \eqref{CLa}
with $1$ in place of $1_\alpha$ to get a weak solution $(\eta, \mathbf{v},\theta)$ of the original problem \eqref{CL},
which is the weak limit of $(\eta_{\beta}, \mathbf{v}_{\beta},\theta_{\beta})$
and enjoys the estimate (i.e. \eqref{jjysosn}):
\stepcounter{linear}
\begin{equation}
\|\eta\|_0^2+\|\bv \|_1^2+\|\theta\|_1^2 \le C_1 . \label{226}
\end{equation}
Keeping in mind that the constant $C_1$ in \eqref{226} depends only on $\|(\tilde{F}',\tilde{G}')\|_{-1}$ (cf. \eqref{jjysosn})
and $L^2$ is dense in $H^{-1}$, we can thus obtain a weak solution $(\eta,\mathbf{v},\theta)$ of \eqref{CL}
for $\tilde{F}'$, $\tilde{G}'\in {H^{-1}}$ by a density argument. We remark that
the obtained solution $(\eta, \mathbf{v},\theta)$ satisfies the weak form of \eqref{CL}$_1$, i.e.,
$$\int_\Omega\left[ -\mathrm{div}(\varphi U)\eta + \left
(\dis{\frac{{\rm div}\bv}{\epsilon} }
+\tilde \bv\cdot\nabla\eta+\eta{\rm div}\tbv\right)\varphi\right] dx
=-\int_\Omega\epsilon{\rm div}(P(U+\tilde \bv))\varphi dx,\quad\forall\, \varphi\in H^1. $$
Moreover, extending $(\eta,P,U,\mathbf{v},\tilde{\mathbf{v}})$ outside of $\Omega$ by zero and still denoting the extended functions by $(\eta,P,U,\mathbf{v},\tilde{\mathbf{v}})$, we find that the above identity still holds in ${\mathbb R}^3$, i.e.,
\stepcounter{linear}
\begin{equation}  \label{new0226}
\int_{\mathbb{R}^3}\left[ -\mathrm{div}(\varphi U)\eta + \left(\dis{\frac{{\rm div}\bv}{\epsilon} }
+\tilde \bv\cdot\nabla\eta+\eta{\rm div}\tbv\right)\varphi\right] dx
=-\int_{\mathbb{R}^3}\epsilon{\rm div}(P(U+\tilde \bv))\varphi dx\end{equation}
for any $\varphi\in H^1(\mathbb{R}^3)$. This property is important in the proof of uniqueness in the next step.
\vspace{2mm}

\emph{Step 3: Uniqueness of weak solutions to \eqref{CL}.}

 Letting $(\eta_1,\bv_1,\theta_1)$, $(\eta_2,\bv_2,\theta_2)\in\bar{H}^0\times{H}_0^1\times{H}_0^1$ be
two weak solutions of the problem \eqref{CL} and denoting $\overline{\eta}=\eta_1-\eta_2$, $\overline{\bv}=\bv_1-\bv_2$,
$\overline{\theta}=\theta_1-\theta_2$, we find that
$(\overline{\eta},\ \overline{\bv},\ \overline{\theta})$ is a weak solution
of the following boundary value problem:
\stepcounter{linear}
\begin{equation}
\left\{\begin{array}{llll}
U\cdot\nabla \overline{\eta} + \dis{\frac{{\rm div}\overline{\bv}}{\epsilon} }
+\tilde \bv\cdot\nabla\overline{\eta}+\overline{\eta}{\rm div}\tbv =0,\\[1mm]
U\cdot\nabla \overline{\bv}-\mu\triangle \overline{\bv} -\zeta\nabla{\rm div}\overline{\bv}
+ \dis{\frac{\nabla \overline{\eta}+\nabla\overline{\theta}}{\epsilon} }+\tilde \theta\nabla\overline{\eta}
+\overline{\eta}\nabla\tilde \theta
=\epsilon (\overline{\eta} \f+\overline{\eta}(U+\tilde\bv)\cdot\nabla(U+\tbv)), \\[2mm]
U\cdot\nabla\overline{\theta}-\kappa\triangle\overline{\theta}
+ \dis{\frac{{\rm div}\overline{\bv}}{\epsilon} }+\overline{\eta}{\rm div}\tbv
=\epsilon (-\overline{\eta}(U+\tbv)\cdot\nabla\tilde\theta-\overline{\eta}\tilde\theta{\rm div}\tbv),\\[1mm]
\overline{\bv}=0,\ \ \ \ \overline{\theta}=0\ \  {\rm on}\ \ \partial\Omega\ \ \ {\rm and}
\quad \dis{\int_\Omega \overline{\eta}=0}.
\end{array}\right.  \label{jss}
\end{equation}

We want to test (\ref{jss}) with $\overline{\eta}$ which is unfortunately not in $H^1$. To circumvent
this difficulty, we use the technique of mollifiers. For a function $w$ of $x$ ($x\in\Omega$) we denote
$$(w)_\delta (x)=\int_\Omega\eta_\delta (x-y)w(y)dy,\ \ \ \ x\in\Omega ,$$
where $\eta_\delta$ is the Friedrichs mollifier.

Now, we take the test function $\varphi$ in \eqref{new0226} to be the mollifier to find that
the equation \eqref{jss}$_1$ can be regularized as
\stepcounter{linear}
\begin{equation}  \label{3CL}
\begin{aligned}
& U\cdot\nabla (\overline{\eta})_\delta +  \dis{\frac{({\rm div}\overline{\bv} )_\delta }{\epsilon}}
+\tilde \bv\cdot\nabla(\overline{\eta})_\delta +  (\overline{\eta} {\rm div}\tbv)_\delta  \\[1mm]
& \quad =\big( U\cdot\nabla (\overline{\eta})_\delta -(U\cdot\nabla \overline{\eta})_\delta \big)
+\big(\tilde \bv\cdot\nabla(\overline{\eta})_\delta - (\tilde \bv\cdot\nabla\overline{\eta})_\delta \big),\quad \mbox{ a.e. in }\Omega ,
\end{aligned}
\end{equation}
where $U\cdot\nabla\overline{\eta}=\mathrm{div}(U\overline{\eta})-\overline{\eta}\mathrm{div}U$ and $\tilde \bv\cdot\nabla
\overline{\eta}=\mathrm{div}(\tilde{\bv}\overline{\eta})-\eta\mathrm{div}\tilde{\bv}$.

Since $\eta\in L^2$, and $U,\tilde{\mathbf{v}}\in H_0^1\cap W^{1,\infty}$ by Sobolev's imbedding theorem,
we have by Friedrichs' lemma on commutators \cite[Lemma 3.1]{NS04} that
\stepcounter{linear}
\begin{equation}  \label{mmc}
U\cdot\nabla (\overline{\eta})_\delta -(U\cdot\nabla \overline{\eta})_\delta, \quad
 \tilde \bv\cdot\nabla(\overline{\eta})_\delta -(\tilde \bv\cdot\nabla
\overline{\eta})_\delta\to 0\;\;\mbox{ strongly in }L^2(\Omega).
\end{equation}
We should point here that \eqref{mmc} is shown in the whole space ${\mathbb R}^3$. However, if we extend $U$, $\tilde\bv$ and
$\eta$ outside of $\Omega$ by zero, we still have $\eta\in L^2({\mathbb R}^3)$ and
$U,\tilde{\mathbf{v}}\in H_0^1({\mathbb R}^3)\cap W^{1,\infty}({\mathbb R}^3)$, and hence we can apply Lemma 3.1 of \cite{NS04}
to get \eqref{mmc}.

Now, we test the equations \eqref{3CL}, \eqref{jss}$_2$ and \eqref{jss}$_3$ by
$(\overline{\eta})_\delta$, $\overline{\bv}$ and $\overline{\theta}$, respectively, to deduce that
\begin{eqnarray*}
&& \int\big(\mu|\nabla\overline{\bv}|^2+\zeta|\nabla\overline{\bv}|^2 + \kappa|\nabla\overline{\theta}|^2\big)dx   \\[1mm]
&& = \epsilon\int\overline{\eta}\overline{\bv}\Big[\f+(U+\tbv)\cdot\nabla(U+\tbv)\Big]
-\overline{\eta}\overline{\theta}\Big[(U+\tbv)\cdot\nabla\tilde\theta+\tilde\theta{\rm div}\tbv\Big]dx  \\[1mm]
&&  +\int\left[\frac12{\rm div}\tbv|(\overline{\eta})_\delta|^2+
\overline{\eta}\tilde{\theta}{\rm div}\bv
-(\overline{\eta} {\rm div}\tbv)_\delta (\eta)_\delta
-\overline{\eta}\overline{\theta}{\rm div}\tbv \right]dx \\[1mm]
&&  +\int\left\{\left(\frac{{\rm div}\overline{\bv}}{\epsilon} \eta
-\frac{({\rm div}\overline{\bv})_\delta}{\epsilon}(\eta)_\delta\right)+
[(U\cdot\nabla (\overline{\eta})_\delta -(U\cdot\nabla \overline{\eta})_\delta )
+(\tilde \bv\cdot\nabla(\overline{\eta})_\delta -(\tilde \bv\cdot\nabla
\overline{\eta})_\delta )](\eta)_\delta \right\}dx. 
\end{eqnarray*}
Letting $\delta\to 0$ in the above identity, using \eqref{mmc} and noting that (cf. \eqref{mmc})
$$ \int\left(\frac12{\rm div}\tbv|(\overline{\eta})_\delta |^2
-(\overline{\eta} {\rm div}\tbv)_\delta (\eta)_\delta
+\frac{{\rm div}\overline{\bv}}{\epsilon} \eta
-\frac{({\rm div}\overline{\bv})_\delta }{\epsilon}(\eta)_\delta\right)dx \to 0, $$
we conclude that
\stepcounter{linear}
\begin{eqnarray}
&&\int\big(\mu|\nabla\overline{\bv}|^2 +\zeta|\nabla\overline{\bv}|^2 +\kappa|\nabla\overline{\theta}|^2\big)dx
\le C(\|\overline{\eta}\|_0^2+\|\overline{\theta}\|_0^2)
 \big[(\|U\|_2+\|\tbv\|_2)\|\tilde\theta\|_3+\|\tbv\|_3(1+\|\tilde\theta\|_2)\big] \nonumber \\
&& \qquad +\epsilon C(\|\overline{\eta}\|_0^2+\|\overline{\bv}\|_0^2)(\|\f\|_2+\|U\|_2+\|\tbv\|_2).  \label{ECL}
\end{eqnarray}

On the other hand, for the inhomogeneous Stokes problem:
\begin{equation}\nonumber
\left\{\begin{array}{llll}
-\mu\triangle \overline{\bv}+ \dis{\frac{\nabla \overline{\eta}+\nabla\overline{\theta}}{\epsilon} }
=-U\cdot\nabla \overline{\bv}+\zeta\nabla{\rm div}\overline{\bv}+\epsilon \Big[\overline{\eta} \f
+\overline{\eta}(U+\tilde\bv)\cdot\nabla(U+\tbv)\Big]-(\tilde \theta\nabla\overline{\eta} +\overline{\eta}\nabla\tilde \theta),\\[1mm]
{\rm div}\overline\bv={\rm div}\overline\bv,\\[1mm]
\overline{\bv}=0,
\end{array}\right.
\end{equation}
we have the estimate
\stepcounter{linear}
\begin{eqnarray}
&& \epsilon\|\overline{\bv}\|_1+\|\overline{\eta}+\overline{\theta}\|_0\nonumber\\[1mm]
&& \quad \le \epsilon C\Big\{(\|U\|_2+1)\|\overline{\bv}\|_1+\|\overline{\bv}\|_1+\epsilon\|\overline{\eta}\|_0\Big[\|\f\|_2+(\|U\|_2+\|\tbv\|_2)^2\Big]
+\|\overline{\eta}\|_0\|\tilde\theta\|_2\Big\}.\label{ESP}
\end{eqnarray}

Combining \eqref{ECL} with \eqref{ESP}, making use of Poincar\'e's inequality, and recalling the smallness of $\epsilon$, $\|\tbv\|_3$ and
$\|\tilde\theta\|_3$, we find that
\stepcounter{linear}
\begin{eqnarray}
&&\|\overline{\eta}\|_0+\|\overline{\bv}\|_1+\|\overline{\theta}\|_1\le 0,\label{unique}
\end{eqnarray}
which implies $\eta_1=\eta_2,\ \bv_1=\bv_2,\ \theta_1=\theta_2$. This completes the proof of Lemma 2.3.
\end{proof}

In order to get higher order uniform-in-$\epsilon$ estimates of the $(\eta,\bv,\theta)$,
we have to bound $\|\bv\|_1$ and $\|\theta\|_1$ as shown in the following lemma.
\begin{lemma}\label{lmc1}
Let $(U,P)$ be the solution of \eqref{i1l} given in Lemma \ref{lmI1}. Let $\tilde F, \tilde G\in H^{-1}$.
Then we have the following uniform-in-$\epsilon$ estimate:
\stepcounter{linear}
\begin{equation}\label{lmc1-i}
\begin{aligned}
\|\bv\|_1^2+\|\theta\|_1^2 \le & C_5 \Big[\|\tbv\|_3\|\eta\|_0^2 +\epsilon^2(\|\tilde F\|_{-1}^2
+\|\tilde G\|_{-1}^2)+\epsilon\|P\|_2(\|U\|_2+\|\tbv\|_2)\|\eta\|_0\\[1mm]
& +\|\tbv\|_1^4+\|\eta\|_1^2\|\tilde\theta\|_1^2 +\|\tbv\|_1^2(\|\tilde\theta\|_1^2+\|\eta\|_1^2)\Big],
\end{aligned}
\end{equation}
where the constant $C_5>0$ is independent of $\epsilon$.
\end{lemma}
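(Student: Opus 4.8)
The plan is to run a weighted $L^2$ energy estimate: I would test the momentum equation \eqref{CL}$_2$ against $\bv$ and the temperature equation \eqref{CL}$_3$ against $\theta$, and then eliminate the remaining singular contribution using the continuity equation \eqref{CL}$_1$. Concretely, multiplying \eqref{CL}$_2$ by $\bv$ and \eqref{CL}$_3$ by $\theta$ and integrating over $\Omega$, the transport terms $\int(U\cdot\nabla\bv)\cdot\bv$ and $\int(U\cdot\nabla\theta)\theta$ vanish because ${\rm div}\,U=0$ and $U,\bv,\theta$ vanish on $\partial\Omega$, while integration by parts turns the viscous and conductive terms into $\mu\|\nabla\bv\|_0^2+\zeta\|{\rm div}\bv\|_0^2$ and $\kappa\|\nabla\theta\|_0^2$. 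Through Poincar\'e's inequality these control the left-hand side $\|\bv\|_1^2+\|\theta\|_1^2$, so the whole task reduces to bounding the right-hand contributions.

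The crucial structural observation concerns the $1/\epsilon$ terms. Integrating the pressure gradient by parts gives $\frac1\epsilon\int(\nabla\eta+\nabla\theta)\cdot\bv=-\frac1\epsilon\int(\eta+\theta){\rm div}\bv$, and the term $\frac1\epsilon\int({\rm div}\bv)\theta$ coming from \eqref{CL}$_3$ cancels the $\theta$-part exactly, leaving a single singular term $-\frac1\epsilon\int\eta\,{\rm div}\bv$. To treat it I would test \eqref{CL}$_1$ against $\eta$: the term $\int\eta\,U\cdot\nabla\eta$ again vanishes since ${\rm div}\,U=0$, and $\int\eta\,\tbv\cdot\nabla\eta=-\tfrac12\int({\rm div}\tbv)\eta^2$, so that
\[
\frac1\epsilon\int\eta\,{\rm div}\bv=-\tfrac12\int({\rm div}\tbv)\eta^2-\epsilon\int\eta\,{\rm div}(P(U+\tbv)).
\]
This is the heart of the argument: the only surviving singular quantity is rewritten as two \emph{non-singular} ones, the first bounded by $C\|\tbv\|_3\|\eta\|_0^2$ via $H^2\hookrightarrow L^\infty$, the second by $C\epsilon\|\eta\|_0\|P\|_2(\|U\|_2+\|\tbv\|_2)$; these are precisely the first and third terms on the right of \eqref{lmc1-i}.

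It then remains to estimate the genuinely lower-order terms. I would combine $\tilde\theta\nabla\eta+\eta\nabla\tilde\theta=\nabla(\eta\tilde\theta)$ and integrate by parts to obtain $-\int\eta\tilde\theta\,{\rm div}\bv$, and bound each remaining product by H\"older's inequality together with the Sobolev embedding $H^1\hookrightarrow L^6$: for instance $\int\eta\tilde\theta\,{\rm div}\bv\le\|\eta\|_{L^6}\|\tilde\theta\|_{L^3}\|{\rm div}\bv\|_0\le C\|\eta\|_1\|\tilde\theta\|_1\|\bv\|_1$, similarly $\int(\tbv\cdot\nabla\tbv)\cdot\bv\le C\|\tbv\|_1^2\|\bv\|_1$, $\epsilon\int\tilde F\cdot\bv\le\epsilon\|\tilde F\|_{-1}\|\bv\|_1$, and the three $\theta$-terms are $\le C(\|\tbv\|_1\|\tilde\theta\|_1+\|\eta\|_1\|\tbv\|_1)\|\theta\|_1+\epsilon\|\tilde G\|_{-1}\|\theta\|_1$. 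Applying Young's inequality, every factor of $\|\bv\|_1$ or $\|\theta\|_1$ is split off with a small coefficient and absorbed into $\mu\|\nabla\bv\|_0^2$ and $\kappa\|\nabla\theta\|_0^2$ on the left, and the residual products reproduce exactly the terms $\|\tbv\|_1^4$, $\|\eta\|_1^2\|\tilde\theta\|_1^2$, $\|\tbv\|_1^2(\|\tilde\theta\|_1^2+\|\eta\|_1^2)$ and $\epsilon^2(\|\tilde F\|_{-1}^2+\|\tilde G\|_{-1}^2)$ of \eqref{lmc1-i}. Since $2\mu+d\lambda\ge0$ forces $\zeta=\mu+\lambda$ to yield a nonnegative contribution $\zeta\|{\rm div}\bv\|_0^2\ge0$, no sign difficulty arises, and the resulting constant $C_5$ is manifestly independent of $\epsilon$.

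I expect the main obstacle to be exactly this singular-term bookkeeping: one must choose the test functions so that the $O(1/\epsilon)$ contributions of the momentum and temperature equations cancel, and then convert the last $O(1/\epsilon)$ term into $O(\epsilon)$ and lower-order pieces through the continuity equation. Everything else is routine H\"older--Sobolev--Young estimation. The computation is to be read as an a priori estimate, rigorously justified on the regularized approximations $(\eta^\delta,\bv^\delta,\theta^\delta)$ of Lemma \ref{lmce} (where the extra term $-\delta\triangle\eta^\delta$ only contributes the favorable $\delta\|\nabla\eta^\delta\|_0^2\ge0$) before passing to the limit $\delta\to0$.
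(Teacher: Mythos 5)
Your proposal is correct and is essentially the paper's own argument: the paper multiplies \eqref{CL}$_1$, \eqref{CL}$_2$, \eqref{CL}$_3$ by $\eta$, $\bv$, $\theta$ respectively and sums, so that the three $O(1/\epsilon)$ terms cancel identically, leaving exactly the residual contributions $-\tfrac12\int({\rm div}\tbv)\eta^2-\epsilon\int\eta\,{\rm div}(P(U+\tbv))$ from the continuity equation that you obtain. Your two-step bookkeeping (momentum plus temperature first, then the continuity equation to absorb $-\tfrac1\epsilon\int\eta\,{\rm div}\bv$) is the same algebra in a different order, and the remaining H\"older--Sobolev--Young estimates match the terms of \eqref{lmc1-i} term by term.
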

\begin{proof}
Multiplying $\eqref{CL}_1$, $\eqref{CL}_2$ and $\eqref{CL}_3$ by $\eta,\ v$
and $\theta$ in $L^2$ respectively, and summing up the resulting equations, we find that
\begin{eqnarray}
&& C^{'}(\|\bv\|_1^2+\|\theta\|_1^2)   \nonumber \\[1mm]
&& = -\frac{1}{\epsilon}\int\big({\rm div}\bv(\eta+\theta)+v\cdot(\nabla\eta+\nabla\theta)\big)dx
-\int\big[(U+\tilde \bv)\cdot\nabla\eta\cdot\eta+\eta^2{\rm div}\tbv
+\epsilon {\rm div}(P(U+\tilde \bv))\eta\big]dx \nonumber \\[1mm]
&& \quad +\int(\epsilon\tilde F-\tilde \bv\cdot\nabla\tilde \bv
-\tilde\theta\nabla\eta-\eta\nabla\tilde\theta)\cdot \bv dx
+\int\big(\epsilon \tilde G-\tilde \bv\cdot\nabla\tilde\theta
-(\eta+\tilde\theta){\rm div}\tbv\big)\theta dx \nonumber \\[2mm]
&& \le \frac{1}{2}\|\tbv\|_3\|\eta\|_0^2+\delta(\|\bv\|_1^2+\|\theta\|_1^2)
+C_{\delta}[\epsilon^2(\|\tilde F\|_{-1}^2+\|\tilde G\|_{-1}^2)+\epsilon\|P\|_2(\|U\|_2
+\|\tbv\|_2)\|\eta\|_0 \nonumber \\[1mm]
\stepcounter{linear}
&& \quad +\|\tbv\|_1^4+\|\eta\|_1^2\|\tilde\theta\|_1^2+\|\tbv\|_1^2(\|\tilde\theta\|_1^2
+\|\eta\|_1^2)],    \label{js3}
\end{eqnarray}
where we have used integration by parts, Sobolev's inequality and the fact that
$$-\frac{1}{\epsilon}\int\big((\eta+\theta){\rm div}\bv +\bv\cdot(\nabla\eta+\nabla\theta)\big)dx
=\frac{1}{\epsilon}\int\big[(\eta+\theta){\rm div}\bv -(\eta+\theta){\rm div}\bv\big]dx=0.$$

Finally, if we take $\delta$ in (\ref{js3}) suitably small and apply Poincar\'e's inequality,
we obtain the estimate \eqref{lmc1-i}.  \end{proof}

\subsubsection{Stokes problem}

We rewrite the momentum equations $\eqref{CL}_2$
as an inhomogeneous Stokes problem to derive the desired bounds for $\|\bv\|_3$
and $\Big\|\dis{\frac{\nabla(\eta+\theta)}{\epsilon}}\Big\|_{1}$:
\stepcounter{linear}
\begin{equation}
\label{stokes}
\left\{\begin{array}{llll}
-\mu\triangle \bv+ \dis{\frac{\nabla \eta+\nabla\theta}{\epsilon} }
=\epsilon \tilde F-\tilde \bv\cdot\nabla \tilde \bv-\tilde \theta\nabla\eta -\eta\nabla\tilde \theta-U\cdot\nabla \bv+\zeta\nabla{\rm div}\bv,\\[1mm]
{\rm div}\,\bv ={\rm div}\,\bv ,\\[1mm]
\bv =0,\ \ \ \ \ \mbox{on}\ \Omega.
\end{array}\right.
\end{equation}

By the usual estimates for the steady Stokes problem (cf. Galdi's book \cite[Chapter IV]{Galdi94}),
Sobolev's embedding $H^2\hookrightarrow L^\infty$ and the inequality
\stepcounter{linear}
\begin{equation}\label{yi}
\|{\rm div}\bv\|_1^2\le\delta\|\bv\|_3^2+C_\delta\|\bv\|_1^2,
\end{equation}
we have
\stepcounter{linear}
\begin{equation}\label{EoS0}
\begin{aligned}
\|\bv\|_{2}+\Big\|\frac{\nabla\eta+\nabla\theta}{\epsilon}\Big\|_0
\le & C\big(\|\epsilon\tilde F\|_0+\|\tilde \bv\cdot\nabla\tilde \bv\|_0+\|\tilde\theta\nabla\eta\|_0
+\|\eta\nabla\tilde\theta\|_0\\[1mm]
&+\|U\cdot\nabla \bv\|_0+\|{\rm div}\bv\|_{1}\big).
\end{aligned}
\end{equation}
and
\begin{equation}\nonumber
\begin{aligned}
\|\bv\|_{3}+\Big\|\frac{\nabla\eta+\nabla\theta}{\epsilon}\Big\|_1
\le & C\big( \|\epsilon\tilde F\|_1+\|\tilde \bv\cdot\nabla\tilde \bv\|_1+\|\tilde\theta\nabla\eta\|_1
+\|\eta\nabla\tilde\theta\|_1\\[1mm]
&+\|U\cdot\nabla \bv\|_1+\|{\rm div}\bv\|_{2}\big),
\end{aligned}
\end{equation}
which together with \eqref{lmc1-i}, \eqref{yi} and \eqref{EoS0} yields
\stepcounter{linear}
\begin{equation}\label{EoS1}
\begin{aligned}
& \|\bv\|_{3}+\Big\|\frac{\nabla\eta+\nabla\theta}{\epsilon}\Big\|_1
 \le C_6(1+\|U\|_2^2)\Big\{\epsilon(\|\tilde F\|_1+\|\tilde G\|_{-1})+\|\tilde \bv\|_3^2
+\|\tilde\theta\|_3\|\eta\|_2 \\[1mm]
&\quad +\|\tilde\bv\|_3^{1/2}\|\eta\|_0 +\Big[\epsilon\|P\|_2(\|U\|_2
+\|\tilde\bv\|_2)\|\eta\|_0\Big]^{1/2}
+\|\tbv\|_1(\|\tilde\theta\|_1+\|\eta\|_1)\Big\}+\|\nabla^2{\rm div}\bv\|_{0},
\end{aligned}
\end{equation}
where $C_6$ is a positive constant independent of $\epsilon$.

\subsubsection{Estimate of $\|\nabla^2{\rm div}\bv\|_0$}
As in \cite{Valli83,VZ86,JO11}, in order to control the term $\|\nabla^2{\rm div}\bv\|_0$ we
divide it into the interior part and the part near the boundary. We remark that here
we have to carefully deal with the terms which involve with the large parameter $1/\epsilon$ in \eqref{CL}.
\\[2mm]
{\sl I. Interior estimate}

First, we derive the interior estimate of $\nabla^2{\rm div}\bv$ by using the estimate \eqref{EoS0}.
Let $\chi_0$ be a $C_0^\infty$-function, then we have
\begin{lemma}\label{lmc2}
There is a positive constant $C_7$ independent of $\epsilon$, such that
\stepcounter{linear}
\begin{equation}\label{lmc2-i}
\begin{aligned}
&\mu\|\chi_0\nabla^2 \bv\|_0^2+\zeta\|\chi_0\nabla{\rm div}\bv\|_0^2+\kappa\|\chi_0\nabla^2\theta\|_0^2\\[1mm]
\le&C_7 \Big[(\|U\|_3+\|\tilde \bv\|_3)\|\eta\|_1^2+\epsilon^2(\|\tilde F\|_0^2+\|\tilde G\|_0^2)
+\epsilon\|P\|_2(\|U\|_2+\|\tilde \bv\|_2)\|\eta\|_1+\|\tilde \bv\|_2^4\\[1mm]
&+\|\eta\|_2^2\|\tilde\theta\|_2^2+\|U\|_3(\|\bv\|_1^2+\|\theta\|_1^2)
+\|\tilde \bv\|_2^2(\|\tilde\theta\|_1^2+\|\eta\|_1^2)+\|\bv\|_1^2\Big]
+\delta\Big\|\frac{\nabla\eta+\nabla\theta}{\epsilon}\Big\|_0^2.
\end{aligned}
\end{equation}
\end{lemma}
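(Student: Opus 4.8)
The plan is to localize with the interior cut-off $\chi_0$ and run a second-order energy estimate on the system \eqref{CL}. Concretely, I would apply a first-order derivative $\partial_k$ to each of the three equations in \eqref{CL}, test the differentiated continuity, momentum and energy equations against $\chi_0^2\partial_k\eta$, $\chi_0^2\partial_k\bv$ and $\chi_0^2\partial_k\theta$ respectively, integrate over $\Omega$, and sum over $k$. Since $\chi_0\in C_0^\infty$ is supported away from $\partial\Omega$, every integration by parts is free of boundary terms, which is exactly what distinguishes this interior estimate from the more delicate near-boundary estimate treated afterwards.

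The positive left-hand side arises from the principal second-order operators. Integrating by parts the $-\mu\triangle\bv$, $-\zeta\nabla{\rm div}\,\bv$ and $-\kappa\triangle\theta$ contributions produces $\mu\|\chi_0\nabla^2\bv\|_0^2$, $\zeta\|\chi_0\nabla{\rm div}\,\bv\|_0^2$ and $\kappa\|\chi_0\nabla^2\theta\|_0^2$, up to commutators in which one derivative falls on $\chi_0^2$; such commutators carry only first-order derivatives of $(\bv,\theta)$ and are absorbed into the right-hand side by Young's inequality (they feed the $\|\bv\|_1^2$ and $\|U\|_3(\|\bv\|_1^2+\|\theta\|_1^2)$ terms).

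The crucial point is the treatment of the singular $1/\epsilon$ terms, and it is here that summing over all three equations pays off. After applying $\partial_k$, the momentum equation contributes $\frac1\epsilon\int\nabla\partial_k(\eta+\theta)\cdot\chi_0^2\partial_k\bv$, while the continuity and energy equations together contribute $\frac1\epsilon\int\partial_k{\rm div}\,\bv\,\chi_0^2\,\partial_k(\eta+\theta)$. Integrating the momentum contribution by parts gives
\begin{equation}\nonumber
\begin{aligned}
\frac1\epsilon\int\nabla\partial_k(\eta+\theta)\cdot\chi_0^2\partial_k\bv
=&-\frac1\epsilon\int\chi_0^2\,\partial_k(\eta+\theta)\,\partial_k{\rm div}\,\bv\\
&-\frac1\epsilon\int\partial_k(\eta+\theta)\,(\nabla\chi_0^2)\cdot\partial_k\bv,
\end{aligned}
\end{equation}
so the leading singular pieces cancel exactly against the continuity--energy contribution, leaving only the commutator $-\frac1\epsilon\int\partial_k(\eta+\theta)(\nabla\chi_0^2)\cdot\partial_k\bv$. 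By Young's inequality this last term is bounded by $\delta\|\frac{\nabla\eta+\nabla\theta}{\epsilon}\|_0^2+C\|\bv\|_1^2$, which is precisely the source of the $\delta\|\frac{\nabla\eta+\nabla\theta}{\epsilon}\|_0^2$ term retained on the right-hand side of \eqref{lmc2-i} (this term will later be absorbed using \eqref{EoS1}).

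It remains to bound the genuinely lower-order terms: the convective terms $U\cdot\nabla$ and $\tbv\cdot\nabla$, the coupling terms $\tilde\theta\nabla\eta$, $\eta\nabla\tilde\theta$, $\eta\,{\rm div}\,\tbv$ and the like, together with the forcings $\epsilon\tilde F$ and $\epsilon\tilde G$. These I would estimate via the Sobolev embedding $H^2\hookrightarrow L^\infty$ and Young's inequality; the convective and coupling terms generate the mixed products $(\|U\|_3+\|\tbv\|_3)\|\eta\|_1^2$, $\|\eta\|_2^2\|\tilde\theta\|_2^2$, $\|\tbv\|_2^2(\|\tilde\theta\|_1^2+\|\eta\|_1^2)$ and $\|\tbv\|_2^4$, while the continuity right-hand side $-\epsilon\,{\rm div}(P(U+\tbv))$ produces $\epsilon\|P\|_2(\|U\|_2+\|\tbv\|_2)\|\eta\|_1$. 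For the forcings I would integrate by parts so as not to differentiate $\tilde F,\tilde G$, and then absorb the resulting $\delta\|\chi_0\nabla^2\bv\|_0^2$ into the left-hand side, which yields the $\epsilon^2(\|\tilde F\|_0^2+\|\tilde G\|_0^2)$ terms. Collecting everything, choosing $\delta$ small to absorb the $\delta\|\chi_0\nabla^2\bv\|_0^2$ and $\delta\|\chi_0\nabla^2\theta\|_0^2$ pieces into the left, and invoking Lemma \ref{lmc1} for the $\|\bv\|_1^2+\|\theta\|_1^2$ contributions, gives \eqref{lmc2-i}. I expect the main obstacle to be the careful bookkeeping ensuring the exact cancellation of the $O(1/\epsilon)$ terms and confirming that the only surviving singular contribution is the harmless $\delta\|\frac{\nabla\eta+\nabla\theta}{\epsilon}\|_0^2$; the absence of boundary terms, thanks to the compact support of $\chi_0$, is what keeps this interior estimate tractable.
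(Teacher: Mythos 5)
Your proposal is correct and follows essentially the same route as the paper: differentiate \eqref{CL} once, test against $\chi_0^2\partial_i\eta$, $\chi_0^2\partial_i v^k$, $\chi_0^2\partial_i\theta$, and observe that the $O(1/\epsilon)$ terms cancel after integration by parts up to the commutator $\frac1\epsilon\int 2\chi_0\partial_k\chi_0\,\partial_i v^k\,\partial_i(\eta+\theta)\,dx$, which is exactly the source of the $\delta\|\frac{\nabla\eta+\nabla\theta}{\epsilon}\|_0^2$ term in \eqref{lmc2-i}. The treatment of the lower-order and forcing terms (Sobolev embedding, Young, integrating by parts to avoid differentiating $\tilde F,\tilde G$) also matches the paper's argument.
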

\begin{proof}
We differentiate \eqref{CL} with respect to $x$ to get that
\stepcounter{linear}
\begin{equation}
\label{CLx}
\left\{\begin{array}{llll}
U^j\partial_{ij}^2 \eta+ \dis{\frac{\partial_i{\rm div}\bv}{\epsilon}}
=-\tilde v^j\partial_{ij}^2\eta-\partial_i(U^j+\tilde v^j)\partial_j\eta
-\tilde v^j\partial_{ij}^2\eta-\partial_i(\eta{\rm div}\tilde \bv)
-\epsilon\partial_i{\rm div}(P(U+\tilde \bv)),\\[1mm]
U^j\partial_{ij}^2 v^k+\partial_iU^j\partial_{j} v^k-\mu\partial_{ijj}^3 v^k-\zeta\partial_{ik}^2{\rm div}\bv
+\dis{\frac{\partial_{ik}^2 \eta+\partial_{ik}^2\theta}{\epsilon} }
=\epsilon \partial_{i}\tilde F^k -\partial_{i}(\tilde v^j\partial_{j}\tilde v^k)
-\partial_{ik}^2(\tilde \theta\eta),\\[1mm]
U^j\partial_{ij}^2 \theta+\partial_iU^j\partial_{j}\theta-\kappa\partial_{ijj}^3\theta
+\dis{\frac{\partial_{i}{\rm div}\bv}{\epsilon} }
=\epsilon \partial_{i}\tilde G-\partial_{i}(\tilde v^j\partial_{j}\tilde \theta)
-\partial_{i}((\eta+\tilde \theta){\rm div}\tilde \bv).
\end{array}\right.
\end{equation}
Multiplying $\eqref{CLx}_1$, $\eqref{CLx}_2$ and $\eqref{CLx}_3$ by $\chi_0^2\partial_i\eta,\ \chi_0^2\partial_iv^k$
and $\chi_0^2\partial_i\theta$ in $L^2$ respectively, and summing up the resulting equations, we find that
\begin{eqnarray}
&&\mu\|\chi_0\partial_{ij}^2 v^k\|_0^2+\zeta\|\chi_0\partial_{i}{\rm div}\bv\|_0^2
+\kappa\|\chi_0\partial_{ij}^2\theta\|_0^2\nonumber\\[1mm]
=&&-\frac{1}{\epsilon}\int\chi_0^2\partial_{i}{\rm div}\bv(\partial_{i}\eta+\partial_{i}\theta)
+\chi_0^2\partial_{i}v^k\partial_{k}(\partial_{i}\eta+\partial_{i}\theta)dx\nonumber\\[1mm]
&&-\int(2\mu\chi_0\partial_j\chi_0\partial_{ij}^2 v^k\partial_{i} v^k
+2\zeta\chi_0\partial_k\chi_0\partial_{i}{\rm div} \bv\partial_{i} v^k
 +2\kappa\chi_0\partial_j\chi_0\partial_{ij}^2 \theta\partial_{i}\theta)dx\nonumber\\[1mm]
&&-\int\chi_0^2\Big[\partial_i(U^j+\tilde v^j)\partial_{j}\eta+(U^j+\tilde v^j)\partial_{ji}^2\eta
+\partial_i\eta{\rm div}\tilde \bv  +\eta\partial_i{\rm div}\tilde \bv
+\epsilon \partial_i{\rm div}(P(U+\tilde \bv))\Big]\partial_i\eta dx\nonumber\\[1mm]
&&+\int\chi_0^2(\epsilon\partial_i\tilde F^k-\partial_i(\tilde \bv\cdot\nabla\tilde v^k)
-\partial_{ik}^2(\tilde \theta\eta))\partial_{i}v^k dx\nonumber\\[1mm]
&&+\int\chi_0^2(\epsilon \partial_{i}\tilde G-\partial_{i}(\tilde v^j\partial_{j}\tilde \theta)
-\partial_{i}((\eta+\tilde \theta){\rm div}\tilde \bv))\partial_{i}\theta dx.\nonumber
\end{eqnarray}
If we apply partial integrations to the above identity, employ Sobolev's and Young's inequalities
and the fact that
$$
\begin{aligned}
&-\frac{1}{\epsilon}\int\chi_0^2\partial_{i}{\rm div}\bv(\partial_{i}\eta+\partial_{i}\theta)
+\chi_0^2\partial_{i}v^k\partial_{k}(\partial_{i}\eta+\partial_{i}\theta)dx\\[1mm]
& =\frac{1}{\epsilon}\int2\chi_0\partial_{k}\chi_0\partial_{i}v^k(\partial_{i}\eta+\partial_{i}\theta)dx
+\frac{1}{\epsilon}\int\chi_0^2\partial_{i}v^k\partial_{k}(\partial_{i}\eta+\partial_{i}\theta)
-\chi_0^2\partial_{i}v^k\partial_{k}(\partial_{i}\eta+\partial_{i}\theta)dx\\[1mm]
& =\frac{1}{\epsilon}\int2\chi_0\partial_{k}\chi_0\partial_{i}v^k(\partial_{i}\eta+\partial_{i}\theta)dx
\le\delta\Big\|\frac{\nabla\eta+\nabla\theta}{\epsilon}\Big\|_0^2+C_\delta\|\bv\|_1^2,
\end{aligned}$$
we infer by summing up $i,j,k$ that
\begin{eqnarray}
&&\mu\|\chi_0\nabla^2 \bv\|_0^2+\zeta\|\chi_0\nabla{\rm div}\bv\|_0^2+\kappa\|\chi_0\nabla^2\theta\|_0^2\nonumber\\[1mm]
&& \le (\|U\|_3+\|\tilde \bv\|_3)\|\eta\|_1^2+\delta\Big(\|\bv\|_2^2
+\|\theta\|_2^2+\Big\|\frac{\nabla\eta+\nabla\theta}{\epsilon}\Big\|_0^2\Big)\nonumber\\[1mm]
&&\quad +C_\delta\Big[\epsilon^2(\|\tilde F\|_0^2+\|\tilde G\|_0^2)
+\epsilon\|P\|_2(\|U\|_2+\|\tilde \bv\|_2)\|\eta\|_1
+\|\tilde \bv\|_2^4+\|\eta\|_2^2\|\tilde\theta\|_2^2\nonumber\\[1mm]
&& \quad +\|U\|_3(\|\bv\|_1^2+\|\theta\|_1^2)
+\|\tilde \bv\|_2^2(\|\tilde\theta\|_1^2+\|\eta\|_1^2)+\|\bv\|_1^2\Big],\nonumber
\end{eqnarray}
which, by using Poincar\'e's inequality and choosing $\delta$ appropriately small, implies the lemma.
\end{proof}
\begin{lemma}\label{lmc3}
There is a positive constant $C_8$ independent of $\epsilon$, such that
\stepcounter{linear}
\begin{equation}\label{lmc3-i}
\begin{aligned}
&\mu\|\chi_0\nabla^3 \bv\|_0^2+\zeta\|\chi_0\nabla^2{\rm div}\bv\|_0^2+\kappa\|\chi_0\nabla^3\theta\|_0^2\\[1mm]
& \le C_8\Big[(\|U\|_3+\|\tilde \bv\|_3)\|\eta\|_2^2+\epsilon^2(\|\tilde F\|_1^2+\|\tilde G\|_1^2)
+\epsilon\|P\|_3(\|U\|_3+\|\tilde \bv\|_3)\|\eta\|_2+\|\tilde \bv\|_3^4\\[1mm]
& \quad +\|\eta\|_2^2\|\tilde\theta\|_2^2+\|U\|_3(\|\bv\|_2^2+\|\theta\|_2^2)
+\|\tilde \bv\|_2^2(\|\tilde\theta\|_2^2+\|\eta\|_2^2)+\|\bv\|_2^2\Big]
+\delta\Big\|\frac{\nabla\eta+\nabla\theta}{\epsilon}\Big\|_1^2.
\end{aligned}
\end{equation}
\end{lemma}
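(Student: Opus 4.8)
The plan is to derive \eqref{lmc3-i} by exactly mirroring the proof of Lemma \ref{lmc2}, but applying one additional spatial derivative and then the same cut-off energy method. Concretely, I would differentiate the system \eqref{CL} twice in $x$, obtaining second-derivative analogues of \eqref{CLx}, in which the principal parts $U^j\partial^3\eta$, $-\mu\partial^4 v^k-\zeta\partial^3_{\cdot k}{\rm div}\bv$, $-\kappa\partial^4\theta$ and the singular terms $\frac{1}{\epsilon}\partial^2\nabla(\eta+\theta)$ appear, while all remaining contributions are lower-order commutators involving at most three derivatives of $(\eta,\bv,\theta)$ and known bounds on $U,P,\tbv,\tilde\theta,\tilde F,\tilde G$. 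I would then multiply the three resulting equations by $\chi_0^2\partial^2_{il}\eta$, $\chi_0^2\partial^2_{il}v^k$ and $\chi_0^2\partial^2_{il}\theta$ respectively, integrate over $\Omega$, and sum over all indices.

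The crucial structural point, identical to Lemma \ref{lmc2}, is the cancellation of the singular $1/\epsilon$ contributions. After integrating by parts, the leading terms
$$-\frac{1}{\epsilon}\int\chi_0^2\,\partial^2_{il}{\rm div}\bv\,(\partial^2_{il}\eta+\partial^2_{il}\theta)
+\chi_0^2\,\partial^2_{il}v^k\,\partial_k(\partial^2_{il}\eta+\partial^2_{il}\theta)\,dx$$
reduce, exactly as before, to a commutator term of the form $\frac{1}{\epsilon}\int 2\chi_0\partial_k\chi_0\,\partial^2_{il}v^k(\partial^2_{il}\eta+\partial^2_{il}\theta)\,dx$, which is controlled by $\delta\|\frac{\nabla\eta+\nabla\theta}{\epsilon}\|_1^2+C_\delta\|\bv\|_2^2$ using Young's inequality. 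This is what produces the final $\delta\|\frac{\nabla\eta+\nabla\theta}{\epsilon}\|_1^2$ term on the right of \eqref{lmc3-i} and simultaneously confines the dangerous factor to an absorbable quantity.

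For the genuinely nonlinear and coupling terms I would estimate each product by Sobolev embedding $H^2\hookrightarrow L^\infty$ and Young's inequality, tracking the derivative count carefully so that every term lands in the stated norms: terms from $U\cdot\nabla\eta$ and $\tbv\cdot\nabla\eta$ yield $(\|U\|_3+\|\tbv\|_3)\|\eta\|_2^2$; the forcing terms $\epsilon\tilde F,\epsilon\tilde G$ give $\epsilon^2(\|\tilde F\|_1^2+\|\tilde G\|_1^2)$; the pressure transport ${\rm div}(P(U+\tbv))$ gives $\epsilon\|P\|_3(\|U\|_3+\|\tbv\|_3)\|\eta\|_2$; the quadratic velocity term $\tbv\cdot\nabla\tbv$ gives $\|\tbv\|_3^4$; the couplings $\tilde\theta\nabla\eta,\eta\nabla\tilde\theta$ give $\|\eta\|_2^2\|\tilde\theta\|_2^2$; and the remaining lower-order pieces produce $\|U\|_3(\|\bv\|_2^2+\|\theta\|_2^2)$, $\|\tbv\|_2^2(\|\tilde\theta\|_2^2+\|\eta\|_2^2)$ and $\|\bv\|_2^2$. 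The boundary-derivative commutators hitting $\chi_0$ cost one fewer derivative and are therefore harmless. Finally, after absorbing the $\delta(\|\bv\|_3^2+\|\theta\|_3^2)$ terms (via Poincar\'e applied on the interior support of $\chi_0$) into the left-hand side and choosing $\delta$ small, I obtain \eqref{lmc3-i}.

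I expect the main obstacle to be purely bookkeeping: at the third-derivative level the two-derivative product rule generates many more commutator terms than in Lemma \ref{lmc2}, and one must verify that none of them requires more than three derivatives of the unknowns or produces a non-absorbable $1/\epsilon$ factor. The delicate terms are $\partial^2(\tilde\theta\eta)$ in the momentum equation and $\partial^2((\eta+\tilde\theta){\rm div}\tbv)$ in the energy equation, where the full two derivatives may fall on $\eta$; these are precisely the sources of the $\|\eta\|_2^2\|\tilde\theta\|_2^2$ and $\|\tbv\|_2^2\|\eta\|_2^2$ terms, and controlling them relies on keeping $\eta$ at the $H^2$ level rather than $H^3$, which is exactly why the energy method with the cut-off (rather than direct elliptic regularity) is used here.
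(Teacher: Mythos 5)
Your proposal follows essentially the same route as the paper: differentiate \eqref{CL} twice, test with $\chi_0^2\partial_{il}^2\eta$, $\chi_0^2\partial_{il}^2 v^k$, $\chi_0^2\partial_{il}^2\theta$, exploit the same cancellation of the $1/\epsilon$ terms down to the commutator $\frac{1}{\epsilon}\int 2\chi_0\partial_k\chi_0\,\partial_{il}^2 v^k(\partial_{il}^2\eta+\partial_{il}^2\theta)\,dx$, and close with Sobolev, Young and absorption of the small-$\delta$ terms. This matches the paper's proof of Lemma \ref{lmc3} in both structure and in the identification of the delicate terms.
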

\begin{proof}
We differentiate \eqref{CL} twice with respect to $x$ to get that
\stepcounter{linear}
\begin{equation}
\label{CLxx}
\left\{\begin{array}{llll}
U^j\partial_{ilj}^3 \eta +\dis{\frac{\partial_i{\rm div}\bv}{\epsilon} } =-\tilde v^j\partial_{ilj}^3\eta
-\partial_{il}^2(U^j+\tilde v^j)\partial_j\eta -\partial_i(U^j+\tilde v^j)\partial_{jl}^2\eta\\[1mm]
\ \ \ \ \ \ \ \ \ \ \ \ \ \ \ \ \ \ \ \ \ \ \ \
-\partial_l(U^j+\tilde v^j)\partial_{ij}^2\eta -\partial_{il}(\eta{\rm div}\tilde \bv)
-\epsilon\partial_{il}^2{\rm div}(P(U+\tilde \bv)),\\[1mm]
U^j\partial_{ijl}^3 v^k+\partial_lU^j\partial_{ij}^2 v^k
+\partial_{il}^2U^j\partial_{j} v^k+\partial_iU^j\partial_{lj}^2 v^k
-\mu\partial_{iljj}^4 v^k-\zeta\partial_{ilk}^3{\rm div}\bv
+\dis{\frac{\partial_{ilk}^3 \eta +\partial_{ilk}^3\theta}{\epsilon} }\\[1mm]
\ \ \ \ \ \ \ \ \ \ \ \ \ \ \ \ \ \ \ \ \
=\epsilon \partial_{il}^2\tilde F^k-\partial_{il}^2(\tilde v^j\partial_{j}\tilde v^k)
-\partial_{ilk}^3(\tilde \theta\eta),\\[1mm]
U^j\partial_{ijl}^3 \theta+\partial_lU^j\partial_{ij}^2 \theta+\partial_{il}^2U^j\partial_{j} \theta
+\partial_iU^j\partial_{lj}^2 \theta
-\kappa\partial_{iljj}^4\theta+\dis{\frac{\partial_{il}^2{\rm div}\bv}{\epsilon} }\\[1mm]
\ \ \ \ \ \ \ \ \ \ \ \ \ \ \ \ \ \ \ \ \
=\epsilon \partial_{il}^2\tilde G -\partial_{il}^2(\tilde v^j\partial_{j}\tilde \theta)
-\partial_{il}^2((\eta+\tilde \theta){\rm div}\tilde \bv).
\end{array}\right.
\end{equation}
Multiplying $\eqref{CLxx}_1$, $\eqref{CLxx}_2$ and $\eqref{CLxx}_3$ again
by $\chi_0^2\partial_{il}^2\eta,\ \chi_0^2\partial_{il}^2v^k$ and $\chi_0^2\partial_{il}^2\theta$ respectively,
and summing up the resulting equations, we deduce that
\begin{eqnarray}
&&\mu\|\chi_0\partial_{ilj}^3 v^k\|_0^2+\zeta\|\chi_0\partial_{il}^2{\rm div}\bv\|_0^2
+\kappa\|\chi_0\partial_{ilj}^3\theta\|_0^2\nonumber\\[1mm]
=&&-\frac{1}{\epsilon}\int\chi_0^2\partial_{il}{\rm div}\bv(\partial_{il}\eta
+\partial_{il}^2\theta)+\chi_0^2\partial_{il}^2v^k\partial_{k}(\partial_{il}^2\eta+\partial_{il}^2\theta)dx\nonumber\\[1mm]
&&-\int 2\chi_0\partial_j\chi_0(\mu\partial_{ilj}^3v^k\partial_{il}^2v^k+\kappa\partial_{ilj}^3\theta\partial_{il}^2\theta)
+2\zeta\chi_0\partial_k\chi_0\partial_{il}{\rm div}v\partial_{il}v^k dx\nonumber\\[1mm]
&&-\int\Big[ U^j\partial_{ilj}^3 \eta+\tilde v^j\partial_{ilj}^3\eta
+\partial_{il}^2(U^j+\tilde v^j)\partial_j\eta+\partial_i(U^j+\tilde v^j)\partial_{jl}^2\eta\nonumber\\[1mm]
&&\ \ \ \ \ \ +\partial_l(U^j+\tilde v^j)\partial_{ij}^2\eta +\partial_{il}^2
(\eta{\rm div}\tilde \bv)+\epsilon\partial_{il}^2{\rm div}(P(U+\tilde \bv))\Big]\chi_0^2\partial_{il}^2\eta dx\nonumber\\[1mm]
&& +\int\Big[ \epsilon \partial_{il}^2\tilde F^k-\partial_{il}^2(\tilde v^j\partial_{j}\tilde v^k)
-\partial_{ilk}^3(\tilde \theta\eta)
-(U^j\partial_{ijl}^3 v^k+\partial_lU^j\partial_{ij}^2 v^k\nonumber\\[1mm]
&&\ \ \ \ \ \ +\partial_{il}^2U^j\partial_{j} v^k+\partial_iU^j\partial_{lj}^2 v^k)\Big]\chi_0^2\partial_{il}^2v^k dx\nonumber\\[1mm]
&&+\int\Big[ \epsilon \partial_{il}^2\tilde G-\partial_{il}^2(\tilde v^j\partial_{j}\tilde \theta)
-\partial_{il}^2((\eta+\tilde \theta){\rm div}\tilde \bv)
-(U^j\partial_{ijl}^3 \theta+\partial_lU^j\partial_{ij}^2 \theta\nonumber\\[1mm]
&&\ \ \ \ \ \ +\partial_{il}^2U^j\partial_{j}\theta+\partial_iU^j\partial_{lj}^2\theta)\Big]\chi_0^2\partial_{il}\theta dx.\nonumber
\end{eqnarray}
We integrate by parts the above identity, utilize Sobolev's inequality and the fact that
\begin{eqnarray}
&&-\frac{1}{\epsilon}\int\chi_0^2\partial_{il}^2{\rm div}\bv(\partial_{il}^2\eta+\partial_{il}^2\theta)
+\chi_0^2\partial_{il}^2v^k\partial_{k}(\partial_{il}^2\eta+\partial_{il}^2\theta)dx\nonumber\\[1mm]
=&&\frac{1}{\epsilon}\int\chi_0^2\partial_{il}^2v^k\partial_k(\partial_{il}^2\eta+\partial_{il}^2\theta)
-\chi_0^2\partial_{il}^2v^k\partial_{k}(\partial_{il}^2\eta+\partial_{il}^2\theta)dx
+\frac{1}{\epsilon}\int2\chi_0\partial_k\chi_0\partial_{il}^2v^k(\partial_{il}^2\eta+\partial_{il}^2\theta)dx\nonumber\\[1mm]
=&&\frac{1}{\epsilon}\int2\chi_0\partial_k\chi_0\partial_{il}^2v^k(\partial_{il}^2\eta+\partial_{il}^2\theta)dx
\le\delta\Big\|\frac{\nabla\eta+\nabla\theta}{\epsilon}\Big\|_1^2+C_\delta\|\bv\|_2^2,\nonumber
\end{eqnarray}
and sum up $i,j,k$ to infer that
\begin{eqnarray*}
&&\mu\|\chi_0\nabla^3 \bv\|_0^2+\zeta\|\chi_0\nabla^2{\rm div}\bv\|_0^2+\kappa\|\chi_0\nabla^3\theta\|_0^2\\[1mm]
&& \le (\|U\|_3+\|\tilde \bv\|_3)\|\eta\|_2^2+\delta(\|\bv\|_3^2+\|\theta\|_3^2
+\Big\|\frac{\nabla\eta+\nabla\theta}{\epsilon}\Big\|_1^2)\\[1mm]
&& \quad +C_\delta[\epsilon^2(\|\tilde F\|_1^2+\|\tilde G\|_1^2)+\epsilon\|P\|_3(\|U\|_3+\|\tilde \bv\|_3)\|\eta\|_2
+\|\tilde \bv\|_3^4+\|\eta\|_2^2\|\tilde\theta\|_2^2\\[1mm]
&& \quad +\|U\|_3(\|\bv\|_2^2+\|\theta\|_2^2) +\|\tilde \bv\|_2^2(\|\tilde\theta\|_2^2+\|\eta\|_2^2)+\|\bv\|_2^2],
\end{eqnarray*}
which, by employing Poincar\'e's inequality and choosing $\delta$ suitably small, gives the lemma.
\end{proof}

\noindent {\sl II. Boundary estimate}

Next, we shall use the method of local coordinates to bound
$\nabla^2{\rm div}\bv$ in the vicinity of the boundary (also see \cite{Valli83,VZ86,JO11}).
For completeness, we briefly describe the local coordinates as follows.
First, one construct the local coordinates by the isothermal coordinates $\lambda(\varphi,\phi)$
to derive an estimate near the boundary (see also \cite{Valli83,VZ86}), where
$$\lambda_{\varphi}\cdot\lambda_{\varphi}>0,\ \ \lambda_{\phi}\cdot\lambda_{\phi}>0,
\ \ \lambda_{\varphi}\cdot\lambda_{\phi}=0.$$
The boundary $\partial\Omega$ can be covered by a finite number of bounded open sets
$W^k\subset {\mathbb R}^3$, $k =1,2,\cdots,L,$ such that for any $x\in W^k\cap\Omega$,
\stepcounter{linear}
\begin{equation}
x =\Lambda^k(\varphi,\phi, r)\equiv \lambda^k(\varphi,\phi) +r\mathbf{n}(\lambda^k(\varphi,\phi)),
\end{equation}
where $\lambda^k(\varphi,\phi)$ is the isothermal coordinates and $\mathbf{n}$
is the unit outer normal to $\partial\Omega$.

Without confusion, we will omit the superscript $k$ in each $W^k$ in the following.
We construct the orthonormal system corresponding to the local coordinates by
\stepcounter{linear}
\begin{equation}
e_1:=\frac{\lambda_{\varphi}}{|\lambda_{\varphi}|},\ \ e_2:=\frac{\lambda_{\phi}}{|\lambda_{\phi}|},\ \
e_3:=e_1\times e_2\equiv \mathbf{n}(\lambda).
\end{equation}
By a straightforward calculation, we see that for sufficiently small $r$, 
$$
\begin{aligned}
& C^2\ni J:={\rm det}\,{\rm Jac}\Lambda={\rm det}\frac{\partial x}{\partial(\varphi,\phi, r)}
=\Lambda_\varphi\times\Lambda_\phi\cdot e_3
\\=&|\lambda_\varphi||\lambda_\phi|+ r(|\lambda_\varphi|\mathbf{n}_\phi\cdot e_2
+ |\lambda_\phi|\mathbf{n}_\varphi\cdot e_1)
+ r^2[(\mathbf{n}_\varphi\cdot e_1)(\mathbf{n}_\phi\cdot e_2)
-(\mathbf{n}_\varphi\cdot e_2)(\mathbf{n}_\phi\cdot e_1)]> 0.
\end{aligned}
$$
And, we can easily derive the following relations as
$({\rm Jac}\Lambda^{-1})\circ\Lambda=({\rm Jac}\Lambda)^{-1}$ (also see \cite{Valli83}):
\stepcounter{linear}
\begin{eqnarray} \label{3.5.1}
&& [\nabla(\Lambda^{-1})^1]\circ\Lambda=J^{-1}(\Lambda_\phi\times e_3), \\
&&  \stepcounter{linear} \label{3.5.2}
[\nabla(\Lambda^{-1})^2]\circ\Lambda=J^{-1}(e_3\times \Lambda_\varphi),  \\
&& \stepcounter{linear}   \label{3.5.3}
[\nabla(\Lambda^{-1})^3]\circ\Lambda=J^{-1}(\Lambda_\varphi\times \Lambda_\phi)=e_3,
\end{eqnarray}
where the symbol $\circ$ stands for the composite of operators.
Set $y := (\varphi,\phi, r)$, and denote by $D_i$ the partial derivative
with respect to $y_i$ in local coordinates. We set the unknowns in local coordinates
$$\hat\eta(t, y):= \eta(t,\Lambda(y)),\quad {\hv}(t,y):= \bv(t,\Lambda(y)),\quad
\hat\theta(t,y):=\theta(t,\Lambda(y)),$$
and the knowns
$$\hat U(t,y):= U(t,\Lambda(y)),\quad \hat{\tilde \bv}(t,y):= \hat{\tilde \bv}(t,\Lambda(y)),\quad
\hat{\tilde\theta}(t,y):={\tilde\theta}(t,\Lambda(y)).$$
Then, we rewrite the system \eqref{CL} in $[0,T]\times\tilde{\Omega}$,
where $\tilde{\Omega}:= \Lambda^{-1}(W\cap\Omega)$, as follows.
\stepcounter{linear}
\begin{equation}
\label{LC}
\left\{\begin{array}{llll}
\hat{U}^ja_{kj}D_k \hat\eta + \dis{\frac{a_{kj}D_k\hat v^j}{\epsilon} }
=-\hat{\tilde v}^ja_{kj}D_k\hat\eta-\hat\eta a_{kj}D_k\hat{\tilde v}^j
-\epsilon a_{kj}D_k(\hat P(\hat U^j+\hat{\tilde v}^j)),\\[2mm]
\hat{U}^ja_{kj}D_k\hat v^i-\mu a_{kj}D_k(a_{lj}D_l\hat v^i)-\zeta a_{ki}D_k(a_{lj}D_l\hat v^j)
+ \dis{\frac{a_{ki}D_k(\hat \eta+\hat \theta)}{\epsilon} }\\
\quad\quad=\epsilon \hat{\tilde F}^i-\hat{\tilde v}^ja_{kj}D_k\hat{\tilde v}^i
-\hat{\tilde\theta} a_{ki}D_k\hat\eta -\hat\eta a_{ki}D_k\hat{\tilde\theta},\\[2mm]
\hat U^ja_{kj}D_k\hat\theta-\kappa a_{kj}D_k(a_{lj}D_l\hat\theta)+ \dis{\frac{a_{kj}D_k\hat v^j}{\epsilon} }
=\epsilon\hat{\tilde G}-\hat{\tilde v}^ja_{kj}D_k\tilde\theta -\hat\eta a_{kj}D_k\hat{\tilde v}^j
-\hat{\tilde\theta} a_{kj}D_k\hat{\tilde v}^j,
\end{array}\right.
\end{equation}
with boundary conditions
\stepcounter{linear}
\begin{align}
&{\hv}(t,y)=0,\ \ \hat\theta(t,y)=0\ \ \ {\rm on}\ \ \partial\tilde\Omega,\label{bc!}
\end{align}
where $a_{ij}$ is the $(i, j)$-th entry of the matrix ${\rm Jac}(\Lambda^{-1})=\frac{\partial y}{\partial x}$.
Clearly, $a_{ij}$ is a $C^2$-function, and it follows from (\ref{3.5.1})--(\ref{3.5.3}) that
\stepcounter{linear}
\begin{equation}\label{3.5.4}
\sum_{j=1}^3a_{3j} a_{3j} = |\mathbf{n}|^2 = 1,\ \
\sum_{j=1}^3a_{1j} a_{3j} =\sum_{j=1}^3a_{2j} a_{3j} = 0.
\end{equation}
Moreover, this localized system has the following properties (see also \cite{Valli83}):
\stepcounter{linear}
\begin{prop}\label{p3}
$D_i(J a_{ij}) = 0,\ {\rm for}\  j = 1, 2, 3;\ \varsigma D_{\tau}{\hv} = 0,\
\varsigma D_{\tau}D_{\xi}{\hv} = 0\ {\rm on}\ \partial\tilde{\Omega}$ in the tangential directions
$\tau, \xi = 1,2$, where $\varsigma\in C_0^\infty (\Lambda^{-1}(W))$. Similarly,
$\varsigma D_{\tau}\hat\theta= 0,\ \varsigma D_{\tau}D_{\xi}\hat\theta = 0\ {\rm on}\ \partial\tilde{\Omega}.$
\end{prop}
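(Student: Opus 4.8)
The plan is to treat the three assertions in turn: the first is a Piola-type differential identity of the change of coordinates, while the two tangential-derivative statements follow at once from the homogeneous Dirichlet data once the geometry of the local coordinates is taken into account.

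First I would establish $D_i(Ja_{ij})=0$ (with summation over $i$). Because $\Lambda(\varphi,\phi,r)=\lambda(\varphi,\phi)+r\,\mathbf{n}(\lambda(\varphi,\phi))$ and $\mathbf{n}$ does not depend on $r$, one has $\Lambda_r=\mathbf{n}=e_3$. Since $a_{mj}$ is the $j$-th component of $[\nabla(\Lambda^{-1})^m]\circ\Lambda$, the relations \eqref{3.5.1}--\eqref{3.5.3} read, after multiplying by $J$, as $Ja_{1j}=(\Lambda_\phi\times\Lambda_r)_j$, $Ja_{2j}=(\Lambda_r\times\Lambda_\varphi)_j$ and $Ja_{3j}=(\Lambda_\varphi\times\Lambda_\phi)_j$. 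Summing over $i$, the desired identity becomes, for each fixed $j$, the vanishing of $D_1(\Lambda_\phi\times\Lambda_r)+D_2(\Lambda_r\times\Lambda_\varphi)+D_3(\Lambda_\varphi\times\Lambda_\phi)$. Expanding each derivative by the product rule produces six cross products; using that the mixed second derivatives of $\Lambda$ commute and that $a\times b=-b\times a$, these six terms cancel in pairs, which yields $D_i(Ja_{ij})=0$ for $j=1,2,3$.

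Next I would prove the boundary relations. In the coordinates $y=(\varphi,\phi,r)$ the directions $y_1=\varphi$ and $y_2=\phi$ are tangential while $y_3=r$ is normal, and the physical boundary $\partial\Omega\cap W$ corresponds to $\{r=0\}$. By the no-slip condition $\eqref{c1}_4$ and the definition $\hv(y)=\bv(\Lambda(y))$ we have $\hv(\varphi,\phi,0)=\bv(\lambda(\varphi,\phi))=0$ identically in $(\varphi,\phi)$. Since $D_\tau$ with $\tau=1,2$ differentiates only in the tangential variables, applying one and then two such derivatives to this identity yields $D_\tau\hv=0$ and $D_\tau D_\xi\hv=0$ on $\{r=0\}$ for $\tau,\xi=1,2$. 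The cut-off $\varsigma\in C_0^\infty(\Lambda^{-1}(W))$ is required because $\partial\tilde\Omega$ also contains the artificial piece $\Lambda^{-1}(\Omega\cap\partial W)$ lying in the interior of $\Omega$, where $\hv$ need not vanish; there $\varsigma\equiv0$. Combining the two cases gives $\varsigma D_\tau\hv=\varsigma D_\tau D_\xi\hv=0$ on all of $\partial\tilde\Omega$, and the very same argument applied to $\theta=0$ on $\partial\Omega$ (so that $\hat\theta(\varphi,\phi,0)=0$) gives $\varsigma D_\tau\hat\theta=\varsigma D_\tau D_\xi\hat\theta=0$.

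I do not expect any genuine analytic difficulty, as this is a preparatory geometric lemma. The only delicate point is the index bookkeeping in the first identity: one must match the convention that $a_{ij}=\partial y_i/\partial x_j$ with the cross-product expressions \eqref{3.5.1}--\eqref{3.5.3} and use $\Lambda_r=e_3$, after which the cancellation is automatic. The tangential-derivative statements are then immediate, the key observation being that tangential differentiation acts only along the surface $\{r=0\}$ on which the Dirichlet data force $\hv$ and $\hat\theta$ to vanish, with the cut-off disposing of the artificial portion of $\partial\tilde\Omega$.
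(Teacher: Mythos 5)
Your proof is correct. The paper does not actually prove Proposition \ref{p3} --- it simply cites Valli's earlier work --- and your argument is precisely the standard one used there: the identity $D_i(Ja_{ij})=0$ is the Piola identity, which you correctly reduce via \eqref{3.5.1}--\eqref{3.5.3} and $\Lambda_r=e_3$ to the pairwise cancellation of the six cross-product terms coming from equality of mixed partials, and the tangential-derivative statements follow exactly as you say from the Dirichlet data on $\{r=0\}$ together with the cut-off $\varsigma$ killing the artificial portion $\Lambda^{-1}(\Omega\cap\partial W)$ of $\partial\tilde\Omega$.
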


Recalling $D_j =\sum_{i=1}^3 a_{ji}\partial_i$, we will frequently make use of the following relations
without pointing out explicitly in subsequent calculations:
\begin{equation}\label{xy}
\|D_y {\hv}\|_{L^p(\Omega)}\le C\|\nabla_x \bv\|_{L^p(\Omega)},\ \ \|D_y^2{\hv}\|_{L^p(\Omega)}
\le C\|\nabla_x \bv\|_{W^{1,p}(\Omega)},\ 1\le p\le \infty.
\end{equation}
The above inequalities apply to $\eta$, $\theta$ and $U$, $\tilde \bv$, $\tilde\theta$, too.

By virtue of the interpolation $\|\cdot\|_{H^2}^2\le\delta \|\cdot\|_{H^3}^2+C_\delta\|\cdot\|_{H^1}^2$,
the boundary estimate of $\|\nabla^2{\rm div} \bu\|_{L_t^2(L^2)}$ can be reduced to the boundedness of
$$\int_0^t\int_{\tilde\Omega} J\chi^2|D_y^2(a_{ji}D_jU^i)|dyds,$$
where $\chi$ is a $C_0^\infty (\Lambda^{-1}(W))$-function.
So, we can split the estimate of derivatives on the boundary into two parts:
the estimate of derivatives in the tangential directions and in the normal direction.
\\[1mm]
{\sl Part 1. Estimate of derivatives in the tangential directions}

First, we apply $D_{\tau\xi}^2$ to \eqref{LC} with $\tau$, $\xi$ being the tangential directions to
$\partial\tilde\Omega$ to get
\stepcounter{linear}
\begin{equation}
\label{LCtx}
\left\{\begin{array}{llll}
\hat{U}^ja_{kj}D_{k\tau\xi}^3 \hat\eta + \dis{\frac{1}{\epsilon}D_{\tau\xi}^2[a_{kj}D_k\hat v^j] }\\[1mm]
=D_{\xi}({\hat U}^ja_{kj})D_{k\tau}^2\hat\eta+D_{\tau\xi}^2({\hat U}^ja_{kj})D_{k}\hat\eta
+D_{\tau}({\hat U}^ja_{kj})D_{k\xi}^2\hat\eta\\[1mm]
\quad-D_{\tau\xi}^2[\hat{\tilde v}^ja_{kj}D_k\hat\eta+\hat\eta a_{kj}D_k\hat{\tilde v}^j
+\epsilon a_{kj}D_k(\hat P(\hat U^j+\hat{\tilde v}^j))],\\[2mm]
\hat{U}^ja_{kj}D_{k\tau\xi}^3\hat v^i-D_{\tau\xi}^2[\mu a_{kj}D_k(a_{lj}D_l\hat v^i)
+\zeta a_{ki}D_k(a_{lj}D_l\hat v^j)]
+ \dis{\frac{1}{\epsilon}D_{\tau\xi}^2[a_{ki}D_k(\hat \eta+\hat \theta)] }\\[3mm]
=D_{\xi}({\hat U}^ja_{kj})D_{k\tau}^2{\hat v}^i+D_{\tau\xi}^2({\hat U}^ja_{kj})D_{k}{\hat v}^i
+D_{\tau}({\hat U}^ja_{kj})D_{k\xi}^2{\hat v}^i\\[1mm]
\quad+D_{\tau\xi}^2[\epsilon \hat{\tilde F}^i-\hat{\tilde v}^ja_{kj}D_k\hat{\tilde v}^i
-\hat{\tilde\theta} a_{ki}D_k\hat\eta -\hat\eta a_{ki}D_k\hat{\tilde\theta}],\\[2mm]
\hat U^ja_{kj}D_{k\tau\xi}^3\hat\theta-\kappa D_{\tau\xi}^2[a_{kj}D_k(a_{lj}D_l\hat\theta)]
+ \dis{\frac{1}{\epsilon}D_{\tau\xi}^2[a_{kj}D_k\hat v^j] }\\[1mm]
=D_{\xi}({\hat U}^ja_{kj})D_{k\tau}^2{\hat\theta}+D_{\tau\xi}^2({\hat U}^ja_{kj})D_{k}{\hat\theta}
+D_{\tau}({\hat U}^ja_{kj})D_{k\xi}^2{\hat\theta}\\[1mm]
\quad+D_{\tau\xi}^2[\epsilon\hat{\tilde G}-\hat{\tilde v}^ja_{kj}D_k\tilde\theta
-\hat\eta a_{kj}D_k\hat{\tilde v}^j -\hat{\tilde\theta} a_{kj}D_k\hat{\tilde v}^j].
\end{array}\right.
\end{equation}
We multiply $\eqref{LCtx}_1$, $\eqref{LCtx}_2$ and $\eqref{LCtx}_3$ by $J\chi^2D_{\tau\xi}\hat\eta$,
$J\chi^2D_{\tau\xi}{\hat v}^i$ and $J\chi^2D_{\tau\xi}\hat\theta$ respectively,
and integrate the resulting identities to deduce that
\stepcounter{linear}
\begin{eqnarray}
&& - \int_{\tilde\Omega}D_{\tau\xi}^2[\mu a_{kj}D_k(a_{lj}D_l\hat v^i)
+\zeta a_{ki}D_k(a_{lj}D_l\hat v^j)]\cdot J\chi^2D_{\tau\xi}^2{\hat v}^idy \nonumber \\[1mm]
&& -\int_{\tilde\Omega}\kappa D_{\tau\xi}^2[a_{kj}D_k(a_{lj}D_l\hat\theta)]\cdot J\chi^2D_{\tau\xi}^2{\hat\theta}dy
\nonumber \\[1mm]
&& +\int_{\tilde\Omega}\hat{U}^ja_{kj}(D_{k\tau\xi}^3 \hat\eta\cdot J\chi^2D_{\tau\xi}^2{\hat\eta}
+D_{k\tau\xi}^3 {\hat v}^i\cdot J\chi^2D_{\tau\xi}^2{\hat v}^i
+D_{k\tau\xi}^3 \hat\theta\cdot J\chi^2D_{\tau\xi}^2{\hat\theta})dy \nonumber  \\[1mm]
 &&+\dis{\frac{1}{\epsilon}}\int_{\tilde\Omega}\big\{ D_{\tau\xi}^2[a_{kj}D_k\hat v^j]\cdot
J\chi^2D_{\tau\xi}^2{\hat\eta}+D_{\tau\xi}^2[a_{ki}D_k(\hat \eta+\hat \theta)]\cdot J\chi^2D_{\tau\xi}^2{\hat v}^i
+D_{\tau\xi}^2[a_{kj}D_k\hat v^j]\cdot J\chi^2D_{\tau\xi}^2{\hat\theta}\big\}dy \nonumber \\[1mm]
=&& \int_{\tilde\Omega}\big\{D_{\xi}({\hat U}^ja_{kj})D_{k\tau}^2\hat\eta
+D_{\tau\xi}^2({\hat U}^ja_{kj})D_{k}\hat\eta+D_{\tau}({\hat U}^ja_{kj})D_{k\xi}^2\hat\eta \label{BVxxx} \\[1mm]
&& -D_{\tau\xi}^2[\hat{\tilde v}^ja_{kj}D_k\hat\eta+\hat\eta a_{kj}D_k\hat{\tilde v}^j
+\epsilon a_{kj}D_k(\hat P(\hat U^j+\hat{\tilde v}^j))]\big\}\cdot J\chi^2D_{\tau\xi}^2{\hat\eta}dy
\nonumber  \\[1mm]
&& +\int_{\tilde\Omega}\big\{ D_{\xi}({\hat U}^ja_{kj})D_{k\tau}^2{\hat v}^i
+D_{\tau\xi}^2({\hat U}^ja_{kj})D_{k}{\hat v}^i+D_{\tau}({\hat U}^ja_{kj})D_{k\xi}^2{\hat v}^i \nonumber \\[1mm]
&& +D_{\tau\xi}^2[\epsilon \hat{\tilde F}^i-\hat{\tilde v}^ja_{kj}D_k\hat{\tilde v}^i
-\hat{\tilde\theta} a_{ki}D_k\hat\eta -\hat\eta a_{ki}D_k\hat{\tilde\theta}]\big\}
\cdot J\chi^2D_{\tau\xi}^2{\hat v}^idy  \nonumber  \\[1mm]
&& +\int_{\tilde\Omega} \big\{ D_{\xi}({\hat U}^ja_{kj})D_{k\tau}^2{\hat\theta}
+D_{\tau\xi}^2({\hat U}^ja_{kj})D_{k}{\hat\theta}+D_{\tau}({\hat U}^ja_{kj})D_{k\xi}^2{\hat\theta}\nonumber  \\[1mm]
&& +D_{\tau\xi}^2[\epsilon\hat{\tilde G}-\hat{\tilde v}^ja_{kj}D_k\tilde\theta
-\hat\eta a_{kj}D_k\hat{\tilde v}^j -\hat{\tilde\theta} a_{kj}D_k\hat{\tilde v}^j]\big\}
\cdot J\chi^2D_{\tau\xi}^2{\hat\theta}dy.  \nonumber
\end{eqnarray}
Now, we denote LHS of \eqref{BVxxx} :=$L_1'+L_2'+L_3'+L_4'$ and have to deal with each term
due to integration by part and the boundary conditions.
\begin{eqnarray}
L_1'=&& -\int_{\tilde\Omega}\Big\{ D_{\tau\xi}^2(\mu a_{kj})D_k(a_{lj}D_l\hat v^i)
+D_\tau(\mu a_{kj})D_{k\xi}^2(a_{lj}D_l{\hat v}^i) \nonumber\\[1mm]
&&+D_\xi(\mu a_{kj})D_k[D_\tau(a_{lj})D_l{\hat v}^i+a_{lj}D_{l\tau}^2{\hat v}^i]
+\mu a_{kj}D_{k\xi}[D_\tau(a_{lj})D_l{\hat v}^i+a_{lj}D_{l\tau}^2{\hat v}^i] \nonumber\\[2mm]
&&+D_{\tau\xi}^2(\zeta a_{ki})D_k(a_{lj}D_l\hat v^j)+D_\tau(\zeta a_{ki})D_{k\xi}^2(a_{lj}D_l{\hat v}^j) \nonumber\\[2mm]
&&+D_\xi(\zeta a_{ki})D_k[D_\tau(a_{lj})D_l{\hat v}^j+a_{lj}D_{l\tau}^2{\hat v}^j] \nonumber\\[2mm]
&&+\zeta a_{ki}D_{k\xi}^2[D_\tau(a_{lj})D_l{\hat v}^j
+a_{lj}D_{l\tau}{\hat v}^j]\Big\}\cdot J\chi^2D_{\tau\xi}^2{\hat v}^idy\nonumber
\end{eqnarray}
where
\begin{eqnarray}
&&-\int_{\tilde\Omega} [\mu a_{kj}D_{k\xi}^2(a_{lj}D_{l\tau}^2{\hat v}^i)
+\zeta a_{ki}D_{k\xi}^2(a_{lj}D_{l\tau}^2{\hat v}^j)]\cdot J\chi^2D_{\tau\xi}^2{\hat v}^idy\nonumber\\[1mm]
=&&\int_{\tilde\Omega} \mu J\chi^2a_{kj}D_{k\tau\xi}^3{\hat v}^ia_{lj}
D_{l\tau\xi}^3{\hat v}^i+\zeta J\chi^2a_{ki}D_{k\tau\xi}^3{\hat v}^ia_{lj}D_{l\tau\xi}^3{\hat v}^jdy\nonumber\\[1mm]
&&+\int_{\tilde\Omega}[D_k(\mu J\chi^2a_{kj})D_{\xi}(a_{lj}D_{l\tau}{\hat v}^i)\cdot D_{\tau\xi}^2{\hat v}^i
+\mu J\chi^2a_{kj}D_{k\tau\xi}^3{\hat v}^iD_{\xi}(a_{lj})D_{l\tau}^2{\hat v}^i\nonumber\\[1mm]
&&+D_k(\zeta J\chi^2a_{ki})D_{\xi}(a_{lj}D_{l\tau}^2{\hat v}^i)\cdot D_{\tau\xi}^2{\hat v}^i
+\mu J\chi^2a_{ki}D_{k\tau\xi}^3{\hat v}^iD_{\xi}(a_{lj})D_{l\tau}^2{\hat v}^j]dy,\nonumber
\end{eqnarray}
and
\begin{equation}\nonumber
\begin{aligned}
L_2'=&\int_{\tilde\Omega}\kappa J\chi^2a_{kj}D_{k\tau\xi}^3\hat\theta a_{lj}D_{l\tau\xi}^3\hat\theta dy\\[1mm]
&+\int_{\tilde\Omega}\Big[D_k(\kappa J\chi^2 a_{kj})D_{\xi}(a_{lj}D_{l\tau}^2\hat\theta)
D_{\tau\xi}^2\hat\theta+\kappa J\chi^2 a_{kj}D_{k\tau\xi}^3\hat\theta D_{\xi}(a_{lj})D_{l\tau}^2\hat\theta\Big]dy\\[1mm]
&-\int_{\tilde\Omega}\Big\{D_{\tau\xi}^2(\kappa a_{kj})D_k(a_{lj}D_l\hat\theta)
+D_{\tau}(\kappa a_{kj})D_{k\xi}^2(a_{lj}D_l\hat\theta)\\[1mm]
&\quad+D_{\xi}(\kappa a_{kj})D_k\Big[D_{\tau}(a_{lj})D_l\hat\theta+a_{lj}D_{l\tau}^2\hat\theta\Big]
+\kappa a_{kj}D_{k\xi}^2D_{\tau}(a_{lj})D_l\hat\theta\Big\}\cdot J\chi^2D_{\tau\xi}^2\hat\theta dy.
\end{aligned}
\end{equation}
On the other hand, recalling that $a_{kj}D_k \hat{U}^j=0$, we have
\begin{equation}\nonumber
\begin{aligned}
L_3'=&-\frac12\int_{\tilde\Omega}J\chi^2a_{kj}D_k\hat{U}^j(|D_{\tau\xi}^2{\hat\eta}|^2+|D_{\tau\xi}^2{\hat v}^i|^2+|D_{\tau\xi}{\hat\theta}|^2)dy\\[1mm]
&-\frac12\int_{\tilde\Omega}D_k (J\chi^2a_{kj})\hat{U}^j(|D_{\tau\xi}^2{\hat\eta}|^2+|D_{\tau\xi}^2{\hat v}^i|^2+|D_{\tau\xi}^2{\hat\theta}|^2)dy\\[1mm]
=&-\frac12\int_{\tilde\Omega}D_k (J\chi^2a_{kj})\hat{U}^j(|D_{\tau\xi}^2{\hat\eta}|^2+|D_{\tau\xi}^2{\hat v}^i|^2+|D_{\tau\xi}^2{\hat\theta}|^2)dy.
\end{aligned}
\end{equation}
As for $L_4'$, in view of the following identity
\begin{eqnarray}
&&\dis{\frac{1}{\epsilon}\int_{\tilde\Omega}D_{\tau\xi}^2[a_{ki}D_k(\hat \eta
+\hat \theta)]\cdot J\chi^2D_{\tau\xi}^2{\hat v}^idy}
=-\dis{\frac{1}{\epsilon}}\int_{\tilde\Omega}J\chi^2D_{\tau\xi}^2
(a_{ki}D_k{\hat v}^i)D_{\tau\xi}^2(\hat\eta+\hat\theta)dy \nonumber \\[1mm]
&&+\dis{\frac{1}{\epsilon}}\int_{\tilde\Omega}\Big[D_{\tau\xi}^2(a_{ki})D_k(\hat\eta+\hat\theta)
+D_{\tau}(a_{ki})D_{k\xi}^2(\hat\eta+\hat\theta)
+D_{\xi}(a_{ki})D_{k\tau}^2(\hat\eta+\hat\theta)\Big]\cdot J\chi^2D_{\tau\xi}^2{\hat v}^idy  \nonumber\\[1mm]
&&+\dis{\frac{1}{\epsilon}}\int_{\tilde\Omega}D_{\xi\tau}^2(\hat\eta+\hat\theta)
[D_{\xi\tau}^2(a_{ki})D_k{\hat v}^i+D_{\tau}(a_{ki})D_{k\xi}{\hat v}^i+D_{\xi}(a_{ki})D_{k\tau}{\hat v}^i \nonumber \\[1mm]
&& -D_k(J\chi^2)a_{ki}D_{\tau\xi}^2{\hat v}^i-D_k(a_{ki})J\chi^2D_{\tau\xi}^2{\hat v}^i]dy,\nonumber
\end{eqnarray}
we deduce that
\begin{equation}\nonumber
\begin{aligned}
L_4'=&\dis{\frac{1}{\epsilon}}\int_{\tilde\Omega}\Big[D_{\tau\xi}^2(a_{ki})D_k(\hat\eta
+\hat\theta)+D_{\tau}(a_{ki})D_{k\xi}(\hat\eta+\hat\theta)
+D_{\xi}(a_{ki})D_{k\tau}^2(\hat\eta+\hat\theta)\Big]\cdot J\chi^2D_{\tau\xi}^2{\hat v}^idy\\[1mm]
&+\dis{\frac{1}{\epsilon}}\int_{\tilde\Omega}D_{\xi\tau}^2(\hat\eta +\hat\theta)[D_{\xi\tau}^2(a_{ki})D_k{\hat v}^i
+D_{\tau}(a_{ki})D_{k\xi}^2{\hat v}^i+D_{\xi}(a_{ki})D_{k\tau}^2{\hat v}^i\\[1mm]
& -D_k(J\chi^2)a_{ki}D_{\tau\xi}^2{\hat v}^i-D_k(a_{ki})J\chi^2D_{\tau\xi}^2{\hat v}^i]dy.
\end{aligned}
\end{equation}

Substituting the above estimates into \eqref{BVxxx}, using Sobolev's and Young's inequalities
and taking into account the property \eqref{xy}, we deduce that
\stepcounter{linear}
\begin{equation}\label{VVttk}
\begin{aligned}
\int_{\tilde\Omega}& \mu J\chi^2a_{kj}D_{k\tau\xi}^3{\hat v}^ia_{lj}D_{l\tau\xi}^3{\hat v}^idy
+\int_{\tilde\Omega}\zeta J\chi^2a_{ki}D_{k\tau\xi}^3{\hat v}^ia_{lj}D_{l\tau\xi}^3{\hat v}^jdy
 +\int_{\tilde\Omega} \kappa J\chi^2a_{kj}D_{k\tau\xi}^3{\hat\theta}a_{lj}D_{l\tau\xi}^3{\hat\theta}dy\\[1mm]
\le & C_9\Big[ \|U\|_3\|\eta\|_2^2+\|\tilde \bv\|_3\|\eta\|_2^2+\epsilon\|P\|_3(\|U\|_3
+\|\tilde \bv\|_3)\|\eta\|_2+\|U\|_3^2(\|\bv\|_1+\|\theta\|_1)\\[1mm]
&+\epsilon^2(\|\tilde F\|_1^2+\|\tilde G\|_1^2)+\|\tilde \bv\|_2^4+\|\eta\|_2^2(\|\tilde \bv\|_2^2
+\|\tilde\theta\|_2^2)+\|\tilde \bv\|_2^2\|\tilde\theta\|_2^2\\[1mm]
&+(\|\bv\|_1^2+\|\theta\|_1^2)\Big]+\delta\Big(\|\bv\|_3^2+\|\theta\|_3^2
+\Big\|\dis{\frac{\nabla\eta+\nabla\theta}{\epsilon}}\Big\|_1^2\Big),
\end{aligned}
\end{equation}
where $C_9$ is a constant.
\\[1mm]
{\sl Part 2. Estimate of derivatives in the normal direction}

We multiply $\eqref{LC}_2$ by $a_{3i}$ to obtain that
\stepcounter{linear}
\begin{equation}\label{LC2}
\begin{aligned}
&-(\mu+\zeta)D_3(a_{lj}D_l{\hat v}^j)+\frac{1}{\epsilon}D_3(\hat\eta+\hat\theta)\\[1mm]
=&-a_{3i}{\hat U}^ja_{kj}D_k{\hat v}^i+\epsilon a_{3i}{\hat{\tilde F}}^i
-a_{3i}{\hat{\tilde v}}^ja_{kj}D_k{\hat{\tilde v}}^i-{\hat{\tilde\theta}}
D_3\hat\eta-\hat\eta D_3\hat{\tilde\theta}\\[1mm]
&+\mu\Big[a_{3i}a_{kj}D_k(a_{lj}D_l{\hat v}^i)-D_3(a_{lj}D_l{\hat v}^j)\Big],
\end{aligned}
\end{equation}
where the last term in RHS of \eqref{LC2} can be written as follows.
\stepcounter{linear}
\begin{eqnarray}
&&\mu\Big[a_{3i}a_{kj}D_k(a_{lj}D_l{\hat v}^i)-D_3(a_{lj}D_l{\hat v}^j)\Big]\nonumber\\[1mm]
=&&\mu\Big[D_3(a_{3j})D_3{\hat v}^j+D_3(a_{\tau j})D_\tau {\hat v}^j
+a_{\tau j}D_{3\tau}^2{\hat v}^j-a_{3j}D_3(a_{3j})a_{3i}D_3{\hat v}^i\nonumber\\[1mm]
&&-a_{\tau j}a_{3i}D_\tau a_{lj}D_l{\hat v}^i-a_{\tau j}a_{\xi j}a_{3i}D_{\tau\xi}^2{\hat v}^i
-a_{3j}a_{3i}D_3(a_{\tau j})D_{\tau}{\hat v}^i\Big],\qquad \tau,\;\xi =1,2,\label{LC2l}
\end{eqnarray}
which does not include the term $D_{33}{\hv}$.

Step 1. To continue our estimate, we show the following lemma.
\begin{lemma}
There are a constant $C_{10}$ and a small $\delta >0$, such that
\stepcounter{linear}
\begin{equation}\label{VVt3d}
\begin{aligned}
&\dis{\frac{\mu+\zeta}{2}}\int_{\tilde\Omega}J\chi^2|D_{\tau3}^2(a_{lj}D_l{\hat v}^j)|^2dy
+\dis{\frac{\kappa}{2}}\int_{\tilde\Omega}J\chi^2|D_{\tau3}^2(a_{kj}D_k{\hat\theta})|^2dy\\[1mm]
\le&C_{10}\Big\{\|U\|_4^2\|\bv\|_1^2+\epsilon^2\|\tilde F\|_1^2+\|\tilde \bv\|_2^4+\|\tilde\theta\|_2^2\|\eta\|_2^2
+\int_{\tilde\Omega}J\chi^2|D_{\tau\xi y}^3\bv|^2dy\\[1mm]
&+\Big[\|U\|_3\|\eta\|_2^2+\|\tilde \bv\|_3\|\eta\|_2^2+(1+\|U\|_3)\|\theta\|_2^2
+\epsilon\|P\|_3(\|U\|_3+\|\tilde \bv\|_3)\|\eta\|_2\\[1mm]
&+\epsilon^2\|\tilde G\|_1^2+\|\tilde \bv\|_2^2\|\tilde\theta\|_2^2
+\|\tilde \bv\|_2^2\|\eta\|_2^2\Big]\Big\}+\delta(\|\bv\|_3^2+\|\theta\|_3^2).
\end{aligned}
\end{equation}
\end{lemma}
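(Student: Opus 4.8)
The plan is to run a localized energy estimate based on the normal-projected momentum identity \eqref{LC2}--\eqref{LC2l} together with the temperature equation $\eqref{LC}_3$ and the continuity equation $\eqref{LC}_1$. The decisive structural feature, already isolated in \eqref{LC2l}, is that the scalar relation \eqref{LC2} expresses $-(\mu+\zeta)D_3(a_{lj}D_l\hat v^j)+\frac1\epsilon D_3(\hat\eta+\hat\theta)$ through quantities of order at most two in $\bv$ and $\theta$, with no pure normal second derivative $D_{33}\hat v$; similarly, using the orthogonality relations \eqref{3.5.4} one can solve $\eqref{LC}_3$ for the normal part $D_{33}\hat\theta$ in terms of $\frac1\epsilon a_{kj}D_k\hat v^j$, the convective term and purely tangential or mixed derivatives of $\theta$. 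These are exactly the relations carrying the mixed quantities $D_{\tau3}^2(a_{lj}D_l\hat v^j)$ and $D_{\tau3}^2(a_{kj}D_k\hat\theta)$ that appear on the left of \eqref{VVt3d}.

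First I would apply a single tangential derivative $D_\tau$ ($\tau=1,2$) to \eqref{LC2} and to $\eqref{LC}_3$, noting that the extra derivative hitting the $C^2$ coefficients $a_{ij}$ and the Jacobian $J$ produces only commutator terms of lower order. I would then multiply the resulting velocity relation by $J\chi^2D_{\tau3}^2(a_{lj}D_l\hat v^j)$ and the temperature relation by $\kappa J\chi^2 D_{\tau3}^2(a_{kj}D_k\hat\theta)$, integrate over $\tilde\Omega$ and add. The good sign of the principal contributions yields the two left-hand terms of \eqref{VVt3d} with the constants $(\mu+\zeta)/2$ and $\kappa/2$ after the customary $\delta$-absorption, provided the $1/\epsilon$ terms can be disposed of.

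The heart of the argument, and the step I expect to be the main obstacle, is precisely the treatment of the singular $1/\epsilon$ contributions: one must not simply solve \eqref{LC2} algebraically for $D_3(a_{lj}D_l\hat v^j)$, since that would cost a derivative on $\frac1\epsilon D_3(\hat\eta+\hat\theta)$, which is only controlled in $L^2$ through the $O(1)$ quantity $\frac{\nabla\eta+\nabla\theta}{\epsilon}$. Instead I would group the top-order $1/\epsilon$ terms arising from the momentum-normal identity, the continuity equation and the temperature equation and integrate by parts, exactly as in the interior estimates of Lemmas \ref{lmc2} and \ref{lmc3}: using $\mathrm{div}\,U=a_{kj}D_k\hat U^j=0$, the identity $D_i(Ja_{ij})=0$ and the boundary relations $\varsigma D_\tau\hv=0$, $\varsigma D_\tau\hat\theta=0$ of Proposition \ref{p3}, the genuinely singular contributions cancel, the boundary integrals vanish, and the surviving remainders carry the factor $1/\epsilon$ only against $\nabla(\hat\eta+\hat\theta)$, i.e. against the bounded quantity $\frac{\nabla\eta+\nabla\theta}{\epsilon}$. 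The delicate point is that in the curvilinear coordinates this cancellation is no longer exact, so I must verify that every commutator produced by the weights $J\chi^2$ and $a_{ij}$ is either an honest lower-order term or a third-order term of one of the admissible shapes, and that no bare factor $1/\epsilon$ survives.

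Finally I would estimate the remaining terms using Sobolev's embedding $H^2\hookrightarrow L^\infty$, Young's inequality, the derivative-transfer bounds \eqref{xy} and the already established estimates of Lemmas \ref{lmc1}--\ref{lmc3} together with the Stokes bounds \eqref{EoS0}--\eqref{EoS1}; the latter relate $\big\|\frac{\nabla\eta+\nabla\theta}{\epsilon}\big\|_1$ to $\|\bv\|_3$ and lower-order data, so that the $1/\epsilon$-weighted remainders can in the end be absorbed into the $\delta(\|\bv\|_3^2+\|\theta\|_3^2)$ term. The genuinely third-order velocity terms that are fully tangential are bounded by $\int_{\tilde\Omega}J\chi^2|D_{\tau\xi y}^3\bv|^2dy$, already controlled in Part 1 via \eqref{VVttk}, which is why this quantity is allowed to appear on the right of \eqref{VVt3d}; the surviving $\|\bv\|_3^2$ and $\|\theta\|_3^2$ are multiplied by a small $\delta$ and absorbed, while the products involving $\|U\|_4$, $\|P\|_3$, $\|\tbv\|_2$, $\|\tilde\theta\|_2$, $\|\eta\|_2$ and the forces $\tilde F,\tilde G$ reproduce exactly the terms collected inside the braces of \eqref{VVt3d}.
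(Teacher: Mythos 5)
Your skeleton is the right one --- one tangential derivative on the normal-projected identity \eqref{LC2}, a companion estimate from the temperature and continuity equations, cancellation of the singular terms among the three, and control of the fully tangential third-order velocity derivatives by the quantity already bounded in \eqref{VVttk} --- but the step you yourself single out as the heart of the matter, the disposal of the $1/\epsilon$ contributions, is handled differently in the paper, and your version as written would fail. The paper does \emph{not} integrate by parts to achieve the cancellation. It applies $D_{\tau3}^2$ (one tangential \emph{and} one normal derivative, not just $D_\tau$) to both $\eqref{LC}_1$ and $\eqref{LC}_3$ and tests them against $J\chi^2D_{\tau3}^2\hat\eta$ and $J\chi^2D_{\tau3}^2\hat\theta$, which produces $+\frac1\epsilon\int_{\tilde\Omega} J\chi^2D_{\tau3}^2(a_{kj}D_k\hat v^j)\,D_{\tau3}^2(\hat\eta+\hat\theta)\,dy$ in \eqref{dou1}--\eqref{*2}; this is \emph{literally} the negative of the term $-\frac1\epsilon\int_{\tilde\Omega} J\chi^2D_{\tau3}^2(\hat\eta+\hat\theta)\,D_{\tau3}^2(a_{lj}D_l\hat v^j)\,dy$ obtained by testing $D_\tau$\eqref{LC2} against $-J\chi^2D_{\tau3}^2(a_{lj}D_l\hat v^j)$ in \eqref{*1}. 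Adding \eqref{*1} and \eqref{*2} kills the singular terms identically --- no commutators, no boundary terms, no leftover $1/\epsilon$. Your proposed route, ``integrate by parts exactly as in the interior estimates,'' is not available here: to make your mismatched pairings cancel you would have to integrate by parts in the normal variable $y_3$, and the resulting boundary traces involve $D_3\hv$ and $a_{3j}D_3\hat v^j$, which Proposition \ref{p3} does \emph{not} annihilate (it only gives vanishing of \emph{tangential} derivatives of $\hv$ and $\hat\theta$ on $\partial\tilde\Omega$; in the interior lemmas the cutoff $\chi_0$ is compactly supported, so the issue never arises there).

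Two concrete choices in your plan break the exact-cancellation mechanism. First, you test the temperature relation against $J\chi^2D_{\tau3}^2(a_{kj}D_k\hat\theta)$ instead of $J\chi^2D_{\tau3}^2\hat\theta$; the singular term then reads $\frac1\epsilon\int_{\tilde\Omega} J\chi^2 D_\tau(a_{kj}D_k\hat v^j)\,D_{\tau3}^2(a_{mj}D_m\hat\theta)\,dy$, which is not the mirror image of the momentum contribution. Second, you never specify the multiplier for the continuity equation; it must be $J\chi^2D_{\tau3}^2\hat\eta$, because \eqref{LC2} carries $\frac1\epsilon D_3(\hat\eta+\hat\theta)$ and only the \emph{sum} of the continuity and temperature pairings reconstitutes $D_{\tau3}^2(\hat\eta+\hat\theta)$. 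Finally, your fallback of leaving remainders in which $1/\epsilon$ multiplies second derivatives of $\hat\eta+\hat\theta$, to be absorbed through $\bigl\|\frac{\nabla\eta+\nabla\theta}{\epsilon}\bigr\|_1$, would force a term $\delta\bigl\|\frac{\nabla\eta+\nabla\theta}{\epsilon}\bigr\|_1^2$ onto the right-hand side of \eqref{VVt3d}; the lemma as stated contains no such term (it is tolerated in \eqref{VVttk} and in the final assembly \eqref{BE}, but the exact cancellation is precisely what lets this lemma dispense with it).
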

\begin{proof}
We differentiate \eqref{LC2} with respect to $y_\tau\ (\tau=1,2)$, then multiply
$-J\chi^2D_{\tau3}$ $(a_{lj}D_l{\hat v}^j)$ in $L^2(\tilde\Omega)$ to get
\stepcounter{linear}
\begin{equation}\label{*1}
\begin{aligned}
&\dis{\frac{\mu+\zeta}{2}}\int_{\tilde\Omega}J\chi^2|D_{\tau3}^2(a_{lj}D_l{\hat v}^j)|^2dy-\frac{1}{\epsilon}\int_{\tilde\Omega}J\chi^2D_{\tau3}^2(\hat\eta+\hat\theta)D_{\tau3}^2(a_{lj}D_l{\hat v}^j)dy\\[1mm]
\le&C(\|a_{3i}{\hat U}^ja_{kj}D_k{\hat v}^i\|_1^2+\epsilon^2\|\hat{\tilde F}^i\|_1^2+\|a_{3i}\hat{\tilde v}^ja_{kj}D_k\hat{\tilde v}^i\|_1^2\\[1mm]
&+\|\hat{\tilde\theta}D_3\hat\eta+\hat\eta D_3\hat{\tilde\theta}\|_1^2)+C\int_{\tilde\Omega}J\chi^2|D_{\tau\xi y}^3v|^2dy\\[1mm]
\le&C(\|U\|_2^2\|v\|_2^2+\epsilon^2\|\tilde F\|_1^2+\|\tilde v\|_2^4+\|\tilde\theta\|_2^2\|\tilde\eta\|_2^2)
+C\int_{\tilde\Omega}J\chi^2|D_{\tau\xi y}^3v|^2dy.
\end{aligned}
\end{equation}
In the mean while, we apply $D_{\tau3}$ to $\eqref{LC}_3$ and $\eqref{LC}_1$, take the product
of the resulting equations with $J\chi^2D_{\tau3}^2\hat\theta$ and $J\chi^2D_{\tau3}^2\hat\eta$
in $L^2(\tilde\Omega)$, and sum then two identities to get
\stepcounter{linear}
\begin{equation}\label{dou1}
\begin{aligned}
&-\int_{\tilde\Omega}\kappa D_{\tau3}\Big[a_{kj}D_k(a_{lj}D_l\hat\theta)\Big]
\cdot J\chi^2D_{\tau3}^2\hat\theta dy +\dis{\frac{1}{\epsilon}}
\int_{\tilde\Omega} D_{\tau3}^2(a_{kj}D_k\hat v^j)\cdot J\chi^2(D_{\tau3}^2\hat\theta+D_{\tau3}^2\hat\eta)dy\\[1mm]
&+\int_{\tilde\Omega}\big[D_{\tau3}^2(\hat U^ja_{kj}D_k\hat\eta)\cdot J\chi^2D_{\tau3}^2\hat\eta
+D_{\tau3}^2(\hat U^ja_{kj}D_k\hat\theta)\cdot J\chi^2D_{\tau3}^2\hat\theta\big] dy\\[1mm]
=& -\int_{\tilde\Omega} D_{\tau3}^2\Big\{\hat{\tilde v}^ja_{kj}D_k\hat\eta
+\hat\eta a_{kj}D_k\hat{\tilde v}^j+\epsilon a_{kj}D_k\Big[\hat P(\hat U^j
+\hat{\tilde v}^j)\Big]\Big\}\cdot J\chi^2D_{\tau3}^2\hat\eta\\[1mm]
& +\int_{\tilde\Omega} D_{\tau3}^2\Big(\epsilon\hat{\tilde G}-\hat{\tilde v}^ja_{kj}D_k\tilde\theta
-\hat\eta a_{kj}D_k\hat{\tilde v}^j -\hat{\tilde\theta} a_{kj}D_k\hat{\tilde v}^j\Big)
\cdot J\chi^2D_{\tau3}^2\hat\theta dy.
\end{aligned}
\end{equation}
We denote LHS of \eqref{dou1}$:= L_1''+L_2''+L_3''$. To control $L_k''$, we integrate by part to deduce that
\begin{equation}\nonumber
\begin{aligned}
L_1{''}=&\kappa\int_{\tilde\Omega}J\chi^2D_{\tau3}^2(a_{kj}D_k \hat\theta)D_{\tau3}^2(a_{lj}D_l \hat\theta)dy\\[1mm]
&-\int_{\tilde\Omega}\kappa J\chi^2D_{\tau3}^2\hat\theta\Big(D_{\tau3}^2(a_{kj})D_k(a_{lj}D_l\hat\theta)
+D_{\tau}(a_{kj})D_{3k}^2(a_{lj}D_l\hat\theta) +D_{3}(a_{kj})D_{k\tau}^2(a_{lj}D_l\hat\theta)\Big)dy\\[1mm]
&+\kappa\int_{\tilde\Omega}D_k(J\chi^2 a_{kj})D_{\tau3}^2(\hat\theta)D_{\tau3}^2(a_{lj}D_l \hat\theta)dy\\[1mm]
&-\kappa\int_{\tilde\Omega}J\chi^2\Big(D_{\tau3}^2(a_{kj})D_k\hat\theta+D_{\tau}(a_{kj})
D_{3k}^2\hat\theta+D_{3}(a_{kj})D_{k\tau}^2\hat\theta\Big)\cdot D_{\tau3}^2(a_{lj}D_l\hat\theta)dy
\end{aligned}
\end{equation}
and
\begin{equation}  \nonumber
\begin{aligned}
L_3{''}=&\int_{\tilde\Omega}J\chi^2D_{\tau3}^2\hat\eta\cdot\Big(D_{\tau3}^2(\hat U^ja_{kj})D_k\hat\eta
+D_{\tau}(\hat U^ja_{kj})D_{k3}^2\hat\eta +D_3(\hat U^ja_{kj})D_{k\tau}^2\hat\eta\Big)dy\\[1mm]
&-\frac12\int_{\tilde\Omega}D_k(J\chi^2a_{kj})\hat U^j|D_{\tau3}^2\hat\eta|^2
+J\chi^2a_{kj}D_k\hat U^j|D_{\tau3}^2|^2dy\\[1mm]
&+\int_{\tilde\Omega}\Big(D_{\tau3}^2(\hat U^ja_{kj})D_k\hat\theta+D_{\tau}(\hat U^ja_{kj})D_{k3}^2\hat\theta
+D_3(\hat U^ja_{kj})D_{k\tau}^2\hat\theta\Big)\cdot J\chi^2D_{\tau3}^2\hat\theta dy\\[1mm]
&-\frac12\int_{\tilde\Omega}D_k(J\chi^2a_{kj})\hat U^j|D_{\tau3}^2\hat\theta|^2
+J\chi^2a_{kj}D_k\hat U^j|D_{\tau3}^2\hat\theta|^2dy.
\end{aligned}
\end{equation}

Inserting the estimates for $L_1''$ and $L_3''$ into \eqref{dou1}, we obtain
\stepcounter{linear}
\begin{eqnarray}
&&\dis{\frac{\kappa}{2}}\int_{\tilde\Omega}J\chi^2|D_{\tau3}^2(a_{kj}D_k{\hat\theta})|^2dy
+\frac{1}{\epsilon}\int_{\tilde\Omega}J\chi^2D_{\tau3}^2(\hat\eta+\hat\theta)D_{\tau3}^2(a_{lj}D_l{\hat v}^j)dy\nonumber\\[1mm]
\le&&\|\theta\|_2^2+\delta\Big(\|D_{3k}^2(a_{lj}D_l\hat\theta)\|_0^2+\|D_{k\tau}^2(a_{lj}D_l\hat\theta)\|_0^2
+\|\theta\|_3^2\Big)\nonumber\\[1mm]
&&+C\Big[\|U\|_3\|\eta\|_2^2+\|U\|_3\|\theta\|_2^2+\|\tilde v\|_3\|\eta\|_2^2
+\epsilon\|P\|_3(\|U\|_3+\|\tilde v\|_3)\|\eta\|_2\nonumber\\[1mm]
&&+\epsilon^2\|\tilde G\|_1^2+\|\tilde v\|_2^2\|\tilde\theta\|_2^2+\|\tilde v\|_2^2\|\eta\|_2^2\Big].\label{*2}
\end{eqnarray}
Thanks to Sobolev's and Young's inequalities, we take the sum of \eqref{*1} and \eqref{*2} to deduce
the estimate \eqref{VVt3d}.
\end{proof}

Step 2. Now, it suffices to bound $\|D_{33}^2(a_{ij}D_i\hat v^j)\|_0$ in order to close
the estimate for ${\rm div}\bv$. We apply $D_3$ to \eqref{LC2} to find that
\stepcounter{linear}
\begin{eqnarray}
&&-(\mu+\zeta)D_{33}^2(a_{lj}D_l\hat{v}^j)+\dis{\frac{1}{\epsilon}}D_{33}^2(\hat\eta+\hat\theta)\nonumber\\[1mm]
=&&D_3(a_{3i})(-\hat U^ja_{kj}D_k\hat v^i+\epsilon\hat{\tilde F}^i-\hat{\tilde v}^ja_{kj}D_k\hat{\tilde v}^i)
-a_{3i}D_3(\hat U^ja_{kj}D_k\hat v^i-\epsilon\hat{\tilde F}^i+\hat{\tilde v}^ja_{kj}D_k\hat{\tilde v}^i)\nonumber\\[1mm]
&&-D_3(\hat{\tilde\theta}D_3\hat\eta+\hat\eta D_3\hat{\tilde\theta})
+O(1)(D_{33\tau}^3\hat v^j+D_{3l}^2\hat v^j+D_l\hat v^j).\label{LC22}
\end{eqnarray}
Now, multiplying the above equality \eqref{LC22} by $-J\chi^2D_{33}^2(a_{lj}D_l\hat v^j)$ in
$L^2(\tilde\Omega)$, one infers that
\stepcounter{linear}
\begin{equation}\label{LC22a}
\begin{aligned}
&\dis{\frac{\mu+\zeta}{2}}\int_{\tilde\Omega}J\chi^2|D_{33}^2(a_{lj}D_l\hat{v}^j)|^2dy
-\dis{\frac{1}{\epsilon}}\int_{\tilde\Omega}J\chi^2D_{33}^2(a_{lj}D_l\hat v^j)\cdot D_{33}^2(\hat\eta+\hat\theta)dy\\[1mm]
\le & C(\|U\|_2^2\|v\|_2^2+\epsilon^2\|\tilde F\|_1^2+\|\tilde v\|_2^4+\|\tilde\theta\|_2^2\|\tilde\eta\|_2^2)
+\|v\|_2^2+\|v\|_1^2+\int_{\tilde\Omega}J\chi^2|D_{33\tau}^3{\hv}|^2dy.
\end{aligned}
\end{equation}
Correspondingly, applying $D_{33}^2$ to \eqref{LC}$_1$ and \eqref{LC}$_3$ and multiplying
the resulting equations by $J\chi^2D_{33}^2\hat\eta$ and $J\chi^2D_{33}^2\hat\theta$ respectively, we get
\stepcounter{linear}
\begin{equation}\label{LC22b}
\begin{aligned}
&\dis{\frac{\kappa}{2}}\int_{\tilde\Omega}J\chi^2|D_{33}^2(a_{kj}D_k\hat\theta)|^2dy
+\dis{\frac{1}{\epsilon}}\int_{\tilde\Omega}J\chi^2D_{33}^2(a_{lj}D_l\hat v^j)\cdot D_{33}^2(\hat\eta+\hat\theta)dy
\\[1mm]
\le & C\|\theta\|_2^2(1+\|U\|_3)+\delta\|\theta\|_3^2
+\|U\|_3\|\eta\|_2^2+\|\tilde v\|_3\|\eta\|_2^2 +\epsilon\|P\|_3(\|U\|_3+\|\tilde v\|_3)\|\eta\|_2\\[1mm]
&+\epsilon^2\|\tilde G\|_1^2+\|\tilde v\|_2^2\|\tilde\theta\|_2^2+\|\tilde v\|_2^2\|\eta\|_2^2.
\end{aligned}
\end{equation}

Combining \eqref{LC22a} with \eqref{LC22b}, we see that there are a constant $C_{11}$ and a small
$\delta$, such that
\stepcounter{linear}
\begin{eqnarray}
&&\dis{\frac{\mu+\zeta}{2}}\int_{\tilde\Omega}J\chi^2|D_{33}^2(a_{lj}D_l\hat{v}^j)|^2dy
+\dis{\frac{\kappa}{2}}\int_{\tilde\Omega}J\chi^2|D_{33}^2(a_{kj}D_k\hat\theta)|^2dy \nonumber\\[1mm]
\le&&\delta(\|\theta\|_3^2+\|\bv\|_3^2)+C_{11}\Big\{(\|U\|_4^2\|\bv\|_1^2+\epsilon^2\|\tilde F\|_1^2
+\|\tilde \bv\|_2^4+\|\tilde\theta\|_2^2\|\tilde\eta\|_2^2) \nonumber\\[1mm]
&& +\Big[\|\bv\|_1^2+\int_{\tilde\Omega}J\chi^2|D_{33\tau}^3{\hv}|^2dy
+\|\theta\|_1^2(1+\|U\|_3) +\|U\|_3\|\eta\|_2^2+\|\tilde \bv\|_3\|\eta\|_2^2 \nonumber\\[1mm]
&& +\epsilon\|P\|_3(\|U\|_3+\|\tilde \bv\|_3)\|\eta\|_2
+\epsilon^2\|\tilde G\|_1^2+\|\tilde \bv\|_2^2\|\tilde\theta\|_2^2+\|\tilde \bv\|_2^2\|\eta\|_2^2\Big]\Big\}.\label{VV33d}
\end{eqnarray}

Step 3. To control the term $D_{33\tau}^3\hat\bv$ on RHS of \eqref{VV33d},
we introduce an auxiliary Stokes problem in the original coordinates in the region near the boundary:
\begin{equation}\nonumber
\left\{\begin{array}{r}
-\mu\triangle_x\big[(\chi D_\tau\hat\bv)\circ\Lambda^{-1}\big]
+\dis{\frac{1}{\epsilon}}\nabla_x\big[(\chi D_\tau(\hat\eta
+\hat\theta))\circ\Lambda^{-1}\big]=G_1\ \ \ \ {\rm in}\ \ W\cap\Omega,\\[1mm]
{\rm div}_x[(\chi D_\tau\hat\bv)\circ\Lambda^{-1}]=G_2\ \ \ \ {\rm in}\ \ W\cap\Omega,\\[1mm]
(\chi D_\tau\hat\bv)\circ\Lambda^{-1}=0 \ \ \ \ \ \ {\rm in}\ \ W\cap\Omega,
\end{array}\right.
\end{equation}
where
$$\begin{aligned}
G_1^i=&\chi D_\tau\Big[\zeta a_{ki}D_k(a_{lj}D_l\hat v^j)+\epsilon\hat\tilde F^i
-\hat\tilde v^j a_{kj}D_k\hat\tilde v^i-\hat\tilde\theta a_{ki}D_k\hat\eta-\hat\eta a_{ki}D_k\hat\tilde\theta\\[1mm]
&-\hat U^j a_{kj}D_k\hat v^i\Big]+o(1)\Big[D_l\hat v^i+D_{kl}^2\hat v^i+\frac{1}{\epsilon}D_k(\hat\eta+\hat\theta)\Big]
\end{aligned}$$
and
$$\begin{aligned}
G_2^i=o(1)(D_\tau\hat v^j+D_{k}\hat v^j+D_{\tau k}^2\hat v^j),
\end{aligned}$$
which can be bounded as follows.
\stepcounter{linear}
\begin{equation}\label{G1}
\begin{aligned}
\|G_1\|_{L^2}^2\le&C(\Big\|\frac{\nabla(\eta+\theta)}{\epsilon}\Big\|_{L^2}^2
+\int_{\tilde\Omega}J\chi^2|D_{\tau k}^2(a_{lj}D_l\hat v^j)|^2dy)+\delta\|\bv\|_3^2\\[1mm]
&+C_{\delta}\|\bv\|_1^2+C(\epsilon^2\|\tilde F\|_1^2+\|\tbv\|_2^4
+\|\tilde\theta\|_2^2\|\tilde\eta\|_2^2+\|U\|_3^4\|\bv\|_1^2)
\end{aligned}
\end{equation}
and
\stepcounter{linear}
\begin{equation}\label{G2}
\|G_2\|_{H^1}^2\le \delta\|\bv\|_3^2+C_{\delta}\|\bv\|_1^2
+C\int_{\tilde\Omega}J\chi^2|D_{\tau k}(a_{lj}D_l\hat v^j)|^2dy.
\end{equation}

Due to the regularity theory of the Stokes problem (see \cite{Galdi94}), one has
\stepcounter{linear}
\begin{equation}\label{stokesEb}
\int_{W\cap\Omega}|\triangle_x(\chi D_{\tau}\hv)\circ\Lambda^{-1}(x)|^2dx\le C(\|G_1\|_{L^2(W\cap\Omega)}^2
+\|G_2\|_{H^1(W\cap\Omega)}^2),
\end{equation}
where the left-hand side of \eqref{stokesEb} is equal to
$$\begin{aligned}
& \int_{W\cap\Omega}|\triangle_x(\chi D_{\tau}\hv)\circ\Lambda^{-1}(x)|^2dx
=\int_{\tilde\Omega}J\Big|\sum_{j=1}^3\sum_{k=1}^3a_{kj}D_k(\sum_{l=1}^3a_{lj}D_l(\chi D_\tau\hv))\Big|^2dy
\\[1mm]
& \qquad =\int_{\tilde\Omega}J\chi^2\Big|\sum_{j,k,l=1}^3a_{kj}a_{lj}D_{kl\tau}^3\hv\Big|^2dy
+o(1)\int_{\tilde\Omega}(|D_\tau\hv|^2+|D_{y\tau}^2\hv|^2)dy.
\end{aligned}$$
And we use \eqref{3.5.4} to get
$$D_{33\tau}^3\hv=\sum_{j,k,l=1}^3a_{kj}a_{lj}D_{kl\tau}^3\hv
-\sum_{1\le k,l\le2}\sum_{j=1}^3a_{kj}a_{lj}D_{kl\tau}^3\hv ,$$
from which, \eqref{G1} and \eqref{G2} it follows that the inequality \eqref{stokesEb} gives
\stepcounter{linear}
\begin{eqnarray} &&
(C_{11}+1)\int_{\tilde\Omega}J\chi^2|D_{33\tau}^3\hv|^2dy \nonumber \\[1mm]
&& \quad\le C(\|G_1\|_{L^2(W\cap\Omega)}^2+\|G_2\|_{H^1(W\cap\Omega)}^2)
+C\int_{\tilde\Omega}J\chi^2|D_{\tau\xi\zeta}^3\hv|^2dy+C_\delta\|\nabla\bv\|_{L^2}^2+\delta\|\bv\|_3^2\nonumber\\[1mm]
&& \quad\le \delta\|\bv\|_3^2+C_{12}\Big[\|\bv\|_1^2+\Big\|\dis{\frac{\nabla(\eta+\theta)}{\epsilon}}\Big\|_0^2
+\int_{\tilde\Omega}J\chi^2(|D_{\xi\tau y}^3\hv|^2+|D_{3\tau}^2(a_{lj}D_l\hv^j)|^2)dy\nonumber\\[1mm]
&& \qquad +(\epsilon^2\|\tilde F\|_1^2+\|\tbv\|_2^4+\|\tilde\theta\|_2^2\|\eta\|_2^2+\|U\|_3^4\|\bv\|_1^2)\Big].
\label{VV33t}   \end{eqnarray}

Now, letting
$$\Phi_\chi:=\int_{\tilde\Omega}J\chi^2\big( |D_{\tau \xi y}^3\hv|^2+|D_{\tau 3}^2(a_{lj}D_l\hat v^j)|^2
+|D_{33}^2(a_{lj}D_l\hat v^j)|^2+|D_{33\tau}^3\hv|^2\big)dy $$
and
$$\Psi_\chi:=\int_{\tilde\Omega}J\chi^2\big( a_{kj}|D_{k\tau \xi}^3\hat\theta|^2
+|D_{\tau 3}^2(a_{kj}D_k\hat\theta)|^2+|D_{33}^2(a_{kj}D_k\hat\theta)|^2\big) dy, $$
we can apply Cauchy-Schwarz's and Young's inequalities as well as the estimate \eqref{EoS0}
to deduce from \eqref{VVttk}, \eqref{VVt3d}, \eqref{VV33d} and \eqref{VV33t} that
\stepcounter{linear}
\begin{equation}\label{BE}
\begin{aligned}
\Phi_\chi+\Psi_\chi\le& \|U\|_3\|\eta\|_2^2+(\|\tbv\|_3+\|\tbv\|_2^2)\|\eta\|_2^2
+\epsilon\|P\|_3(\|U\|_3+\|\tbv\|_3)\|\eta\|_2\\[1mm]
&+C(1+\|U\|_3^4)(\|\bv\|_1^2+\|\theta\|_1^2)+\delta\Big(\|\bv\|_3^2+\|\theta\|_3^2
+\Big\|\dis{\frac{\nabla(\eta+\theta)}{\epsilon}}\Big\|_1^2\Big)\\[1mm]
&+C_\delta\Big[\epsilon^2(\|\tilde F\|_1^2+\|\tilde G\|_1^2)+\|\tbv\|_2^4
+(\|\tilde\theta\|_2^2+\|\tbv\|_2^2)\|\eta\|_2^2+\|\tbv\|_2^2\|\tilde\theta\|_2^2\Big],
\end{aligned}
\end{equation}
which, together with \eqref{EoS1}--\eqref{lmc3-i}, results in
\stepcounter{linear}
\begin{equation}\label{EoVT}
\begin{aligned}
&\|\bv\|_3^2+\|\theta\|_3^2+\Big\|\frac{\nabla\eta+\nabla\theta}{\epsilon}\Big\|_1^2\\[1mm]
\le&C_{12}\Big\{\|U\|_3\|\eta\|_2^2+(1+\|U\|_3^4)\Big[\epsilon\|P\|_3(\|U\|_3+\|\tbv\|_3)
+(\|\tilde\theta\|_2^2+\|\tbv\|_2^2+\|\tbv\|_3)\Big]\|\eta\|_2^2\\[1mm]
&+(1+\|U\|_3^4)(\|\bv\|_1^2+\|\theta\|_1^2)+(1+\|U\|_3^4)\Big[\epsilon^2(\|\tilde F\|_1^2
+\|\tilde G\|_1^2)+\|\tbv\|_2^4+\|\tbv\|_2^2\|\tilde\theta\|_2^2\Big]\Big\}.
\end{aligned}
\end{equation}

\subsubsection{Boundedness of $\eta$}

In the next lemma, we derive upper bounds of $\|\eta\|_1$ and $\|\eta\|_2$.
\begin{lemma}\label{lmc4}
There are a small $\delta>0$, and two positive constants $C_{13}$ and $C_{14}$ independent of $\epsilon$,
such that
\stepcounter{linear}
\begin{eqnarray}
\|\eta\|_1 & \le& C_{13} \Big[\epsilon^2\|\tilde F\|_0+\epsilon\big(\|\tbv\|_2\|\tbv\|_1
+\|\tilde\theta\|_2\|\eta\|_2+\|U\|_2\|\bv\|_2+\|{\rm div}\bv\|_1+(\|\tilde F\|_{-1}+\|\tilde G\|_{-1})\big)
\nonumber \\
&& +\epsilon\|P\|_2(\|U\|_2+\|\tbv\|_2)
+\|\tbv\|_3^{\frac{1}{2}}\|\eta\|_0+\|\tbv\|_2^2+\|\eta\|_1\|\tilde\theta\|_1
+\|\tbv\|_1(\|\tilde\theta\|_1+\|\eta\|_1)\Big] \label{ETx}
\end{eqnarray}
and
\stepcounter{linear}
\begin{eqnarray}
\|\eta\|_2 &\le& C_{14}(1+\epsilon)(1+\|U\|_2^2)\Big\{\epsilon(\|\tilde F\|_1+\|\tilde G\|_{-1})
+\|\tilde \bv\|_3^2+\|\tilde\theta\|_3\|\eta\|_2 +\|\tilde\bv\|_3^{\frac12}\|\eta\|_0 \nonumber \\
&& +\Big[\epsilon\|P\|_2(\|U\|_2+\|\tilde\bv\|_2)\|\eta\|_0\Big]^{\frac12}
+\|\tbv\|_1(\|\tilde\theta\|_1+\|\eta\|_1)\Big\} +\epsilon\|\bv\|_{3}+\delta\|\theta\|_3.
 \label{ETxx}  \end{eqnarray}
\end{lemma}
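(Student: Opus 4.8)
My plan is to build everything on the reformulated momentum balance \eqref{CL2}, which is produced by applying $\epsilon\triangle^{-1}{\rm div}$ to $\eqref{CL}_2$ and which I would read as an explicit formula for the combined quantity $\eta+\theta$,
\begin{equation}\nonumber
\begin{aligned}
\eta+\theta={}&\epsilon(\mu+\zeta){\rm div}\bv-\epsilon\eta\tilde\theta
-\epsilon\triangle^{-1}{\rm div}(U\cdot\nabla\bv)
-\epsilon^2\triangle^{-1}{\rm div}\big[\eta(U+\tbv)\cdot\nabla(U+\tbv)\big]\\
&+\epsilon^2\triangle^{-1}{\rm div}(\eta\f)
+\epsilon\triangle^{-1}{\rm div}(\epsilon\tilde F'-\tbv\cdot\nabla\tbv).
\end{aligned}
\end{equation}
The decisive point is that every term on the right carries a power of $\epsilon$ (or is made small through the smallness of $\|\tbv\|_3$ and $\|\tilde\theta\|_3$), so the singular factor $1/\epsilon$ present in $\eqref{CL}_2$ has been removed. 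Because $\triangle^{-1}{\rm div}$ gains one derivative (it is bounded $H^{s}\to H^{s+1}$), I would estimate $\|\eta+\theta\|_1$ and $\|\eta+\theta\|_2$ directly from this identity, using $H^2\hookrightarrow L^\infty$ to control the products $\eta\tilde\theta$, $\eta\f$, $\eta(U+\tbv)\cdot\nabla(U+\tbv)$, $\tbv\cdot\nabla\tbv$, and the definition of $\tilde F'$ to produce the term $\epsilon\|P\|_2(\|U\|_2+\|\tbv\|_2)$.

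The bound on $\eta$ itself would then come from the splitting $\eta=(\eta+\theta)-\theta$. For \eqref{ETx} I would write $\|\eta\|_1\le\|\eta+\theta\|_1+\|\theta\|_1$ and insert the estimate for $\|\theta\|_1$ from Lemma \ref{lmc1}: taking square roots of \eqref{lmc1-i} reproduces precisely the tail terms $\|\tbv\|_3^{1/2}\|\eta\|_0$, $\|\tbv\|_2^2$, $\|\eta\|_1\|\tilde\theta\|_1$ and $\|\tbv\|_1(\|\tilde\theta\|_1+\|\eta\|_1)$ of \eqref{ETx}, while $\|\eta+\theta\|_1$ supplies the $\epsilon$-weighted terms, the contributions $\epsilon\|U\|_2\|\bv\|_2$ and $\epsilon\|{\rm div}\bv\|_1$ coming from $\epsilon\triangle^{-1}{\rm div}(U\cdot\nabla\bv)$ and $\epsilon(\mu+\zeta){\rm div}\bv$.

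For \eqref{ETxx} I would repeat the argument one derivative higher, $\|\eta\|_2\le\|\eta+\theta\|_2+\|\theta\|_2$. The first term I would estimate from \eqref{CL2} combined with the Stokes bounds \eqref{EoS0} and \eqref{EoS1} for $\|\bv\|_2$, $\|\bv\|_3$ and $\|{\rm div}\bv\|_1$; this is how the weight $(1+\|U\|_2^2)$ and the bracketed expression common to \eqref{EoS1} appear, and it is also where the term $\epsilon\|\bv\|_3$ is generated. The second term I would treat by the interpolation $\|\theta\|_2\le\delta\|\theta\|_3+C_\delta\|\theta\|_1$, keeping $\delta\|\theta\|_3$ for later absorption and bounding $\|\theta\|_1$ again by Lemma \ref{lmc1}. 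Throughout, the products $\|\tilde\theta\|_3\|\eta\|_2$, $\|\eta\|_1\|\tilde\theta\|_1$ and the like are deliberately left on the right-hand side, since in the a priori scheme of Section 3 the quantities $\|\tbv\|_3$, $\|\tilde\theta\|_3$ and $\epsilon$ will be small and these terms will then be moved to the left.

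The step I expect to be most delicate is the bookkeeping of the singular factor $1/\epsilon$: I must check that, after applying $\triangle^{-1}{\rm div}$, the only surviving occurrence of $1/\epsilon$ sits inside $\|(\nabla\eta+\nabla\theta)/\epsilon\|_1$, which is already controlled by \eqref{EoS1} and \eqref{EoVT}, so that no uncontrolled negative power of $\epsilon$ remains. Equally fiddly is deciding, in each product at the $H^2$ level, which factor should carry the top derivative so that the Sobolev embeddings deliver exactly the norms displayed in \eqref{ETx} and \eqref{ETxx} rather than losing half a derivative.
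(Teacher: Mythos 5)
Your proposal is correct in substance and rests on the same two pillars as the paper's proof: (i) the momentum equation forces $\eta+\theta$ to be of order $\epsilon$, and (ii) one recovers $\eta$ from $\eta=(\eta+\theta)-\theta$, controlling $\theta$ by Lemma \ref{lmc1} (with the square roots taken via \eqref{fact}), the interpolation $\|\theta\|_2\le\delta\|\theta\|_3+C_\delta\|\theta\|_1$, and Poincar\'e's inequality (which applies to $\eta$ since $\int_\Omega\eta=0$). The one genuine difference is how step (i) is implemented. The paper does not return to the identity \eqref{CL2} at all: it simply multiplies the already-proved Stokes estimates \eqref{EoS0} and \eqref{EoS1} by $\epsilon$, which converts the bounds on $\|(\nabla\eta+\nabla\theta)/\epsilon\|_0$ and $\|(\nabla\eta+\nabla\theta)/\epsilon\|_1$ into bounds on $\|\nabla(\eta+\theta)\|_0$ and $\|\nabla(\eta+\theta)\|_1$ with an explicit factor $\epsilon$ in front of every term, and the extra $\|\nabla^2{\rm div}\bv\|_0$ in \eqref{EoS1} becomes the harmless $\epsilon\|\bv\|_3$ of \eqref{ETxx}. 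Your route through \eqref{CL2} buys nothing over this and introduces a delicacy the paper avoids: reading \eqref{CL2} as a pointwise formula for $\eta+\theta$ requires $\triangle^{-1}\triangle(\eta+\theta)=\eta+\theta$, which depends on the boundary condition chosen for $\triangle^{-1}$, while $\eta$ carries no boundary condition; in the paper \eqref{CL2} is only ever used inside the variational formulation of Section 2.2. Since you yourself fall back on \eqref{EoS0} and \eqref{EoS1} for the $H^2$ level, the cleaner course is to use them at the $H^1$ level too. Two bookkeeping slips, neither fatal: the term $\epsilon\|P\|_2(\|U\|_2+\|\tbv\|_2)$ in \eqref{ETx} does not come from $\tilde F'$ (that would give $\epsilon^3\|P\|_2(\cdots)$) but from the term $\epsilon\|P\|_2(\|U\|_2+\|\tbv\|_2)\|\eta\|_0$ of \eqref{lmc1-i} after a square root and Young's inequality, with the resulting $\delta\|\eta\|_0$ absorbed into the left-hand side; and your worry about a surviving $1/\epsilon$ is moot, since after multiplying by $\epsilon$ no negative power of $\epsilon$ remains anywhere.
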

\begin{proof}
From \eqref{EoS0} we get
\begin{equation}\nonumber
\begin{aligned}
\epsilon\|\bv\|_{2}+\|\nabla\eta+\nabla\theta\|_0
\le C\big[\epsilon^2\|\tilde F\|_0+\epsilon(\|\tbv\|_2\|\tbv\|_1
+\|\tilde\theta\|_2\|\eta\|_2+\|U\|_2\|\bv\|_2+\|{\rm div}\bv\|_1)\big],
\end{aligned}
\end{equation}
which together with Lemma \ref{lmc1} gives
\begin{eqnarray*}
&&\|\nabla\eta\|_1\le\|\nabla\eta+\nabla\theta\|_0+\|\nabla\theta\|_0  \\[1mm]
\le&& C\big[\epsilon^2\|\tilde F\|_0+\epsilon(\|\tbv\|_2\|\tbv\|_1
+\|\tilde\theta\|_2\|\eta\|_2+\|U\|_2\|\bv\|_2+\|{\rm div}\bv\|_1)\big] \\[1mm]
&&+C\Big[\|\tbv\|_3\|\eta\|_0^2+\epsilon^2(\|\tilde F\|_{-1}^2+\|\tilde G\|_{-1}^2)
+\epsilon\|P\|_2(\|U\|_2+\|\tbv\|_2)\|\eta\|_0\\[1mm]
&&+\|\tbv\|_1^4+\|\eta\|_1^2\|\tilde\theta\|_1^2
+\|\tbv\|_1^2(\|\tilde\theta\|_1^2+\|\eta\|_1^2)\Big]^{1/2}.
\end{eqnarray*}
If we apply Poincar\'e's and Young's inequalities to the above inequality, and use the fact that
\stepcounter{linear}
\begin{equation}\label{fact}
(A_1+A_2+\cdots+A_n)^{1/2}\le A_1^{1/2}+A_2^{1/2} +\cdots +A_n^{1/2}\quad
\mbox{for } A_i\ge0\;\;\; (i=1,\cdots,n),
\end{equation}
we obtain the estimate \eqref{ETx} immediately.

On the other hand, from the estimate \eqref{EoS1} we conclude that
\stepcounter{linear}
\begin{equation}\nonumber
\begin{aligned}
\|\nabla\eta\|_1 & \le \|\nabla\eta+\nabla\theta\|_1+\|\nabla\theta\|_1 \\[1mm]
& \le C\epsilon(1+\|U\|_2^2)\Big\{\epsilon(\|\tilde F\|_1+\|\tilde G\|_{-1})
+\|\tilde \bv\|_3^2+\|\tilde\theta\|_3\|\eta\|_2 +\|\tilde\bv\|_3^{\frac12}\|\eta\|_0 \\[1mm]
& \quad +\Big[\epsilon\|P\|_2(\|U\|_2+\|\tilde\bv\|_2)\|\eta\|_0\Big]^{\frac12}
+\|\tbv\|_1(\|\tilde\theta\|_1+\|\eta\|_1)\Big\} +\epsilon\|\bv\|_{3}
 +\delta\|\theta\|_3+C_\delta\|\theta\|_1,
\end{aligned}
\end{equation}
which, together with Poincar\'e's inequality, \eqref{ETx} and \eqref{fact}, implies \eqref{ETxx}.
\end{proof}

\section{Existence of the nonlinear problem}
\newcounter{exist}
\renewcommand{\theequation}{\thesection.\arabic{equation}}
\setcounter{equation}{0}

In this section, we give the proof of the existence for the
nonlinear problem (\ref{1}) by using the Tikhonov theorem which can be
found in \cite{NS04}. For completeness, we state the theorem in the
following.
\begin{theorem} (Tikhonov Theorem, \cite[P72, 1.2.11.6]{NS04}) \ Let $M$ be a nonempty
bounded closed convex subset of a separable reflexive Banach space
$X$ and let $F : M \rightarrow M$ be a weakly continuous mapping
(i.e., if $x_n \in M$, $x_n \rightharpoonup x$ weakly in $X$, then
$F(x_n) \rightharpoonup F(x)$ weakly in $X$ as well). Then $F$ has at least one fixed point in $M$.
\end{theorem}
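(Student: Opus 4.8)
The plan is to transport the problem into the weak topology, where $M$ becomes compact and convex, and then invoke a topological fixed point theorem. Write $\sigma(X,X^*)$ for the weak topology; since it is generated by the seminorms $x\mapsto|f(x)|$ for $f\in X^*$, the pair $(X,\sigma(X,X^*))$ is a locally convex topological vector space, and $M$ is a convex subset of it. First I would establish that $M$ is weakly compact: by Mazur's theorem a norm-closed convex set is weakly closed, and by the Kakutani characterization of reflexivity (equivalently, Banach--Alaoglu applied to $X=X^{**}$) every bounded closed ball is weakly compact; hence the bounded, closed, convex set $M$, being a weakly closed subset of such a ball, is weakly compact.

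Next I would use separability to metrize. Because $X$ is separable and reflexive, $X^*$ is separable as well; choosing a countable dense set $\{f_n\}$ in the closed unit ball of $X^*$ and setting $d(x,y)=\sum_{n\ge1}2^{-n}|f_n(x-y)|$ gives a metric that induces the weak topology on the bounded set $M$. Thus $(M,d)$ is a compact metric space. This metrizability is the crucial point for matching the hypothesis: the statement only assumes that $F$ is \emph{sequentially} weakly continuous, but on a metrizable space sequential continuity coincides with topological continuity, so $F\colon(M,d)\to(M,d)$ is genuinely continuous, i.e.\ continuous for $\sigma(X,X^*)$.

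At this stage $M$ is a nonempty convex compact subset of the locally convex space $(X,\sigma(X,X^*))$ and $F$ is a continuous self-map, so the Schauder--Tychonoff fixed point theorem yields a fixed point $x=F(x)\in M$, which completes the proof. The main obstacle is precisely the interplay of topologies: Schauder's classical theorem cannot be applied in the norm topology, since $M$ is not norm-compact, so one is forced to work weakly, and one must ensure that the only continuity assumed, namely $x_n\rightharpoonup x\Rightarrow F(x_n)\rightharpoonup F(x)$, actually suffices --- which is exactly what the metrizability coming from separability guarantees. If a fully self-contained argument were required, the remaining kernel is the Schauder--Tychonoff theorem itself, proved by approximating $F$ through finite-dimensional Schauder projections onto convex hulls of finite nets and applying Brouwer's theorem; it is here that the convexity of $M$ is indispensable.
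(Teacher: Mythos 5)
Your proof is correct, but note that the paper does not prove this statement at all: it is quoted verbatim from the monograph of Novotn\'y and Stra\v{s}kraba \cite[1.2.11.6]{NS04} and used as a black box, so there is no proof in the paper to compare against. Your argument is the standard (and essentially the textbook) one, and each step checks out: $M$ is weakly compact by Mazur plus the Kakutani characterization of reflexivity; separability of $X$ together with reflexivity gives separability of $X^*$ (via $X^{**}\cong X$), hence metrizability of the weak topology on the bounded set $M$; this upgrades the assumed sequential weak continuity of $F$ to genuine continuity for $\sigma(X,X^*)$; and the Schauder--Tychonoff theorem in the Hausdorff locally convex space $(X,\sigma(X,X^*))$ then yields the fixed point. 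You correctly identify the only delicate point, namely that sequential continuity alone would not suffice without the metrizability furnished by separability.
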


Define a Banach space $X$ by
$$X=\bar{H}^1\times {H}_0^1\times{H}_0^1 ,$$
which can be easily verified to be separable and reflexive.

A convex subset $K_1(E)$ of $X$ is defined by
$$K_1(E)=\big\{(\bv, \theta)\in (H^3\cap H_0^1))\times (H^3\cap H_0^1)\;\, \big|\;\, \|\bv\|_3+\|\theta\|_3\le E\big\},$$
where $E<1$ is a small positive constant. By the lower semi-continuity of norms, we easily see that
the subset $K_1(E)$ is also closed in $X$.

We define a space $K$ by
$$K=K_0\times K_1(E),$$
where $K_0$ is defined by (\ref{k0}). Note that $K$ is a nonempty bounded closed convex subset of $X$.

Now, we define a nonlinear operator $N$ from $K$ to $X$ by
$$N(\tilde U, \tilde \bv, \tilde\theta):=(U, \bv,\theta),$$
where $U$ and $(\bv,\theta)$ are the solutions of \eqref{i1l}
and \eqref{CL} for given $(\tilde U,\tilde \bv,\tilde\theta)$, respectively.

Next, we want to find a fixed point $(U,\bv,\theta)$ of $N$ in $K$, such that $(U, \bv,\theta)=N(U, \bv, \theta)$,
which, together with the existence of weak solutions in Lemmas \ref{lmI1} and \ref{lmc1}, gives that
$(U,P)$ and $(\eta,\bv,\theta)$ are solutions of the boundary value problems \eqref{i1} and \eqref{c1},
respectively. So $(U+\bv, \epsilon P+\eta, \theta)$ will be a solution to \eqref{1}.
For this purpose, we have to show that $N$ maps $K$ into itself
and $N:K\rightarrow K$ is a weakly continuous mapping.
\begin{lemma}\label{lmex1}
There is a small constant $\epsilon_0>0$, depending only on $\Omega ,\mu,\lambda$, $\f$ and $\g$,
such that for any $\epsilon\in (0,\epsilon_0)$,
$K$ is a nonempty bounded closed convex subset of $X$ and $N(K)\subset K$.
\end{lemma}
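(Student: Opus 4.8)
The plan is to verify the two assertions of Lemma~\ref{lmex1} separately: first that $K=K_0\times K_1(E)$ is a nonempty bounded closed convex subset of $X$, and then the self-mapping property $N(K)\subset K$. The topological properties are essentially bookkeeping: $K_0$ is convex and bounded in $H^4$ by its defining inequalities \eqref{k0}, and it is nonempty because Lemma~\ref{lmI2} guarantees a solution $U$ of \eqref{i1l} lying in $K_0$; the set $K_1(E)$ is a ball in $(H^3\cap H_0^1)^2$, hence convex and bounded, and closed in $X$ by lower semicontinuity of the $H^3$-norm under weak $X$-convergence, as already noted in the text. A product of such sets is again nonempty, bounded, closed and convex, so this part is immediate.

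The substance is the inclusion $N(K)\subset K$. For a given $(\tilde U,\tbv,\tilde\theta)\in K$ I would first recall that, since $\|\tbv\|_3\le E<a_0$ for $E$ small, Lemma~\ref{lmI1} and Lemma~\ref{lmI2} apply and place the incompressible component $U$ back in $K_0$; the constants $C_1,\dots,C_4$ there are independent of the given data, which is exactly what makes $K_0$ invariant. It then remains to show $(\bv,\theta)\in K_1(E)$, i.e. $\|\bv\|_3+\|\theta\|_3\le E$. Here I would invoke Lemma~\ref{lmce} to obtain the weak solution $(\eta,\bv,\theta)$ of \eqref{CL}, and then feed the a priori bounds \eqref{EoVT} and Lemma~\ref{lmc4} into one another. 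Concretely: the estimates \eqref{ETx}--\eqref{ETxx} control $\|\eta\|_1,\|\eta\|_2$ in terms of $\epsilon$, the data $\|(\f,\g)\|_{H^2}$ (through $\|U\|,\|P\|$ and $\tilde F,\tilde G$), and the small quantities $\|\tbv\|_3,\|\tilde\theta\|_3\le E$; substituting these into \eqref{EoVT} closes the loop and yields a bound of the schematic form
\begin{equation}\nonumber
\|\bv\|_3+\|\theta\|_3\le C(\|(\f,\g)\|_{H^2},\Omega)\,\bigl(\epsilon+E^2+E^{1/2}(\epsilon E)^{1/2}+\cdots\bigr),
\end{equation}
where every term on the right carries either a positive power of $\epsilon$ or a power of $E$ strictly larger than one (or a product of the two). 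The one term demanding care is $\|\tilde\theta\|_3\|\eta\|_2$, which is only linear in the small data; one absorbs the $\|\eta\|_2$ on the right back through \eqref{ETxx}, using that its leading coefficient carries a factor $\epsilon$ or a factor $E$, so that after absorption the residual contribution to $\|\bv\|_3+\|\theta\|_3$ is again of order $\epsilon+E^2$ or higher.

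The main obstacle is precisely this bootstrap/absorption step: the system \eqref{CL} couples $\eta$, $\bv$, $\theta$ through the singular $1/\epsilon$ terms, so the estimate for $\|\eta\|_2$ in \eqref{ETxx} itself contains $\epsilon\|\bv\|_3+\delta\|\theta\|_3$, while \eqref{EoVT} for $\|\bv\|_3+\|\theta\|_3$ contains $\|\eta\|_2^2$. I would resolve this by treating them as a coupled linear system of inequalities in the unknowns $(\|\eta\|_2,\|\bv\|_3+\|\theta\|_3)$, choosing $\delta$ small to absorb the $\delta\|\theta\|_3$ term on the left and then choosing $\epsilon_0$ small (depending on $\|(\f,\g)\|_{H^2}$ and $\Omega$ through the accumulated constants $C_6,\dots,C_{14}$) so that the coefficient multiplying the off-diagonal coupling is $<1$. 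With $E$ then fixed small enough that $CE^2\le E/2$ and $\epsilon_0$ shrunk so that the $O(\epsilon)$ contributions are $\le E/2$, the right-hand side is bounded by $E$, giving $(\bv,\theta)\in K_1(E)$. I would emphasize that $\epsilon_0$ must be chosen after $E$, since the admissible $\epsilon_0$ depends on the constants that in turn depend on $\|(\f,\g)\|_{H^2}$ and $\Omega$ but not on $E$, exactly as in the statement.
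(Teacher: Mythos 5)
Your proposal is correct and follows essentially the same route as the paper: after placing $U$ back in $K_0$ via Lemmas \ref{lmI1}--\ref{lmI2}, you bound $\tilde F,\tilde G$ through $M_1$, combine Lemma \ref{lmc1}, Lemma \ref{lmc4} and \eqref{EoVT} into a closed inequality for $\|\bv\|_3+\|\theta\|_3+\|\eta\|_2$, and absorb the small-coefficient terms by choosing $\delta$, then $E$, then $\epsilon_0$ — which is exactly the paper's derivation of \eqref{VTE} and the subsequent smallness conditions. The only cosmetic difference is that you phrase the absorption as a coupled $2\times 2$ system of inequalities while the paper sums everything into a single scalar inequality; the content is the same.
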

\begin{proof}
By virtue of the definition, it is obvious that $K\subset X$ is a nonempty, bounded, closed convex set.
Now, we will show that
the operator $N$ maps $K$ into itself,
i.e., $N(K)\subset K$. To this end, let $(\tilde U, \tilde v, \tilde\theta)\subset K$
and $(U, \bv,\theta)=N(\tilde U, \tilde \bv, \tilde\theta)$. By Lemmas \ref{lmI1} and \ref{lmI2},
we see that $U\in K_0$ for all $\tilde U\in K_0$. Thus, it suffices to check that
$(\bv,\theta)\in K_1(E)$ for $(\tilde \bv, \tilde\theta)\in K_1(E)$.
By \eqref{lml2-2} and \eqref{lml2-3}, we have
\stepcounter{exist}
\begin{equation}\label{ex1}
\|U\|_3+\|\nabla P\|_1\le M_1\quad\mbox{ and }\quad \|U\|_4+\|\nabla P\|_2\le M_2,
\end{equation}
where $M_1=C_3\|\h\|_1(\|\h\|_1+1)^8$ and $M_2=C_4\|\h\|_2(\|\h\|_2+1)^{12}$.

On the other hand, recalling the definition of $\tilde F$ and $\tilde G$, we get from \eqref{ex1} that
\stepcounter{exist}
\begin{equation}\label{F}
\begin{aligned}
\|\tilde F\|_1 =&\|(\epsilon P+\eta)\f -(\epsilon P+\eta)(U+\tilde \bv)
\cdot\nabla (U+\tilde \bv)-\tilde \theta\nabla P- P\nabla\tilde \theta\|_1  \\[1mm]
\leq & C\big[\|\eta\|_2(\|\f\|_1+\|U\|_2^2+\|\tbv\|_2^2)+\epsilon\|P\|_2(\|\f\|_1
+\|U\|_2^2+\|\tbv\|_2^2+\|\tilde\theta\|_2)\big]  \\[1mm]
\leq & C\|\eta\|_2(\|\f\|_1+(M_1+1)^2)+\epsilon C M_1(\|\f\|_1+(M_1+1)^2),
\end{aligned}
\end{equation}
\stepcounter{exist}
\begin{equation}\label{G}
\begin{aligned}
\|\tilde G\|_1=&\|\tilde \Psi-(\epsilon P+\eta)(U+\tilde \bv)\cdot\nabla\tilde \theta
+(\epsilon P+\eta)\tilde\theta{\rm div}\tbv+P{\rm div}\tbv\|_1\\[1mm]
\leq & C\big[\epsilon(\|U\|_2^2+\|\tbv\|_2^2)+(\|\eta\|_2+\epsilon\|P\|_2)
(\|U\|_2+\|\tbv\|_2)\|\tilde\theta\|_2+\|P\|_2\|\tbv\|_2\big]\\[1mm]
\leq & C\|\eta\|_2(M_1+1)+C(\epsilon(M_1+1)^2+(M_1+1)).
\end{aligned}
\end{equation}
As a result of Poincar\'e's inequality and Lemma \ref{lmc1}, we have
\stepcounter{exist}
\begin{eqnarray}
\|\bv\|_1+\|\theta\|_1
& \leq & C\Big\{(E^\frac12+E)\|\eta\|_2+\epsilon\Big[\|\eta\|_2(M_1+1)
+C\big(\|\eta\|_2(\|\f\|_1+(M_1+1)^2\big)   \nonumber \\[1mm]
&& +\epsilon C M_1(\|\f\|_1+(M_1+1)^2)+\epsilon(M_1+1)^2+(M_1+1))\Big]   \nonumber \\[1mm]
&& +\epsilon C_\delta(M_1+1)^2+E^2\Big\}+\delta\|\eta\|_2    \nonumber \\[1mm]
& \le & C\big[E^\frac12+\epsilon(\|\f\|_1+(M_1+1)^2)\big]\|\eta\|_2
+\epsilon^2 C M_1(\|\f\|_1+(M_1+1)^2)+CE^2   \nonumber  \\[1mm]
&& +\delta\|\eta\|_2+\epsilon C_\delta(M_1+1)^2,  \label{Vx}
\end{eqnarray}
where we have used the estimates \eqref{ex1}, \eqref{F} and \eqref{G}.

On the other hand, in view of Poincar\'e's and Young's inequalities, \eqref{Vx} and \eqref{ETxx}
in Lemma \ref{lmc4}, we find that
\stepcounter{exist}
\begin{eqnarray}
\|\eta\|_2
& \leq & C(1+\epsilon)(1+M_1^2)\Big\{\epsilon\Big[\|\eta\|_2(\|\f\|_1+(M_1+1)^2)+ M_1(\|\f\|_1+(M_1+1)^2)\nonumber\\[1mm]
&&+\|\eta\|_2(M_1+1)+(\epsilon(M_1+1)^2+(M_1+1))\Big]+E^2+(E+E^{\frac12})\|\eta\|_2\nonumber\\[1mm]
&& +\epsilon^{\frac12}M_1^{\frac12}(M_1+E)^{\frac12}\|\eta\|_2^{\frac12}+E(E+\|\eta\|_2)\Big\}
+\epsilon\|\bv\|_3+\delta\|\theta\|_3\nonumber\\[1mm]
&\leq & C_{15}(1+\epsilon)(1+M_1^2)\Big\{ \epsilon\Big[\|\f\|_1+(M_1+1)^2+(M_1+1)\Big]+(E+E^{\frac12})+\delta
\Big\}\|\eta\|_2\nonumber\\[1mm]
&&  +\epsilon\|\bv\|_3+\delta\|\theta\|_3+C_{15}(1+\epsilon)(1+M_1^2)\Big\{\epsilon (M_1+1)^2\nonumber \\[1mm]
&& +\epsilon\Big[ M_1(\|\f\|_1+(M_1+1)^2)+(\epsilon(M_1+1)^2+(M_1+1))\Big]+E^2\Big\},
\label{ET}
\end{eqnarray}
where $C_{15}$ is a positive constant.

Combining \eqref{EoVT} with \eqref{F}--\eqref{ET}, we conclude that
there is a constant $C_{16}$, such that
\stepcounter{exist}
\begin{eqnarray}
\|\bv\|_3 +\|\theta\|_3+\|\eta\|_2
\leq && C_{16}(1+M_1)^5\Big\{ \epsilon\Big[\|\f\|_1+(M_1+1)^2\Big]+(E+E^{\frac12})+\delta
\Big\}\|\eta\|_2 \nonumber\\[1mm]
&& +\epsilon M_1^{\frac12}\|\bv\|_3+\delta M_1^{\frac12}\|\theta\|_3+C_{16}(1+M_1)^5\Big\{\epsilon(M_1+1)^2 \nonumber\\[1mm]
&& +\epsilon(1+ M_1)\Big[\|\f\|_1+(M_1+1)^2\Big]+E^2\Big\}.\label{VTE}
\end{eqnarray}

Thus, first taking $\delta$ small enough and then choosing $\epsilon_0$ and $E$ suitably small, such that
$$\begin{aligned}
\ &C_{16}(1+M_1)^5\Big\{ \epsilon_0\Big[\|\f\|_1+(M_1+1)^2\Big]+(E+E^{\frac12})\Big\}<1,\quad
 \; \epsilon_0 M_1^{\frac12}<1, \\[1mm]
\ & C_{16}(1+M_1)^5\Big\{\epsilon_0(M_1+1)^2+\epsilon_0(1+ M_1)\Big[\|\f\|_1+(M_1+1)^2\Big]+E^2\Big\}<E,
\end{aligned}$$
we deduce from (\ref{VTE}) that for all $\epsilon\in (0,\epsilon_0)$,
\begin{equation}\nonumber
\begin{aligned}
\|\bv\|_3+\|\theta\|_3+\|\eta\|_2\le E,
\end{aligned}
\end{equation}
which gives $\|\bv\|_3+\|\theta\|_3\le E$ immediately. This completes the proof.
\end{proof}
\begin{lemma}\label{lmex2}
Let $N,X,K_0$ and $K_1(E)$ be the same as in Lemma \ref{lmex1}. Then $N:K\rightarrow K$
is a weakly continuous mapping.
\end{lemma}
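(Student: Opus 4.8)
The plan is to verify the defining implication of weak continuity directly. Let $(\tilde U_n,\tbv_n,\tilde\theta_n)\in K$ with $(\tilde U_n,\tbv_n,\tilde\theta_n)\rightharpoonup(\tilde U,\tbv,\tilde\theta)$ weakly in $X$, and write $(U_n,\bv_n,\theta_n):=N(\tilde U_n,\tbv_n,\tilde\theta_n)$. Since $K$ is closed and convex, it is weakly closed, so the limit $(\tilde U,\tbv,\tilde\theta)$ again lies in $K$. The first point is to upgrade the given weak convergence: because every element of $K$ is bounded in the stronger spaces $H^4\times H^3\times H^3$, the sequence is bounded there, and after passing to a subsequence (not relabeled) I obtain $\tilde U_n\rightharpoonup\tilde U$ in $H^4$, $\tbv_n\rightharpoonup\tbv$ and $\tilde\theta_n\rightharpoonup\tilde\theta$ in $H^3$; Rellich's theorem then yields the strong convergences $\tilde U_n\to\tilde U$ in $H^3$ and $\tbv_n\to\tbv$, $\tilde\theta_n\to\tilde\theta$ in $H^2$.

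Next I would exploit $N(K)\subset K$ from Lemma \ref{lmex1}, which supplies the uniform bounds $\|U_n\|_4$, $\|\bv_n\|_3+\|\theta_n\|_3$ and $\|\eta_n\|_2$ on the images and on the associated densities $\eta_n$ solving \eqref{CL}. Extracting a further subsequence gives weak limits $U^*\in H^4$, $\bv^*,\theta^*\in H^3$, $\eta^*\in H^2$, together with the companion pressures $P_n\rightharpoonup P^*$ (bounded in $H^3$ by the estimates behind Lemma \ref{lmex1}); Rellich again furnishes strong convergence one order below, namely $\eta_n\to\eta^*$ in $H^1$, $P_n\to P^*$ and $\bv_n,\theta_n\to\bv^*,\theta^*$ in $H^2$. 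It then remains to pass to the limit in the weak formulations of \eqref{i1l} and \eqref{CL} and to identify the limits. For the incompressible block this is routine: in the convective term $(\tilde U_n+\tbv_n)\cdot\nabla U_n$ the coefficient converges strongly in $H^2$ while $\nabla U_n$ converges weakly, so the product converges to $(\tilde U+\tbv)\cdot\nabla U^*$; hence $(U^*,P^*)$ solves \eqref{i1l} for the limit data, and the uniqueness of this Stokes problem (Lemma \ref{lmI1}) forces $U^*=U$, the first component of $N(\tilde U,\tbv,\tilde\theta)$.

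For the compressible block I would pass to the limit term by term in the weak formulation underlying Lemma \ref{lmce}. The key observation is that each nonlinear or coupling term is a product in which the single highest-order factor converges only weakly while every remaining factor converges strongly in a space making the pairing continuous; for instance $\tilde\theta_n\nabla\eta_n\rightharpoonup\tilde\theta\nabla\eta^*$ and $\eta_n\,{\rm div}\,\tbv_n\to\eta^*\,{\rm div}\,\tbv$ since $\eta_n\to\eta^*$ in $H^1$ and $\tbv_n\to\tbv$ in $H^2$, and likewise the forces $\tilde F_n,\tilde G_n$ are polynomial expressions in the strongly convergent lower-order data together with the convergent $U_n,P_n$. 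Because $\epsilon$ is fixed here, the factors $1/\epsilon$ are harmless constants. This identifies $(\eta^*,\bv^*,\theta^*)$ as a weak solution of \eqref{CL} for the limit data, whence the uniqueness in Lemma \ref{lmce} gives $(\eta^*,\bv^*,\theta^*)=(\eta,\bv,\theta)$, the compressible component of $N(\tilde U,\tbv,\tilde\theta)$. Since the limit is independent of the extracted subsequence, a standard Urysohn subsequence argument promotes this to convergence of the full sequence, and as $H^4\hookrightarrow\bar H^1$, $H^3\hookrightarrow H_0^1$, it follows that $(U_n,\bv_n,\theta_n)\rightharpoonup N(\tilde U,\tbv,\tilde\theta)$ weakly in $X$. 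I expect the only real obstacle to be the bookkeeping in this last limit passage: one must check for every product that the weak-strong pairing is genuinely continuous, which hinges on the uniform $H^4/H^3/H^2$ bounds delivering, through Rellich, strong convergence exactly one order below the weak convergence.
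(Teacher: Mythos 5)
Your argument is correct, but it follows a genuinely different route from the paper's. The paper reduces weak continuity to \emph{norm} continuity of $N$ on $K$ in the topology of $X$ (legitimate because $K$ is bounded in $H^4\times H^3\times H^3$, so weak $X$-convergence of a sequence in $K$ is automatically strong $X$-convergence), and then establishes a quantitative Lipschitz estimate by writing down the system satisfied by the differences $W=U_2-U_1$, $\bw=\bv_2-\bv_1$, $\xi=\eta_2-\eta_1$, $\beta=\theta_2-\theta_1$ and running energy and Stokes estimates on it, arriving at $\|W\|_1+\|\bw\|_1+\|\beta\|_1\le C(\|\tilde U_2-\tilde U_1\|_1+\|\tbv_2-\tbv_1\|_1+\|\tilde\theta_2-\tilde\theta_1\|_1)$. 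You instead argue by compactness: extract subsequences converging weakly in the high norms and strongly one order below, pass to the limit term by term in the weak formulations of \eqref{i1l} and \eqref{CL}, identify the limit through uniqueness of the linearized problems, and upgrade to the full sequence by the standard subsequence trick. Both routes ultimately rest on uniqueness for the linearized systems --- the paper's difference estimate \emph{is} a quantitative stability/uniqueness statement, while you invoke qualitative uniqueness (which for \eqref{CL} is proved inside Lemma \ref{lmce}; note that Lemma \ref{lmI1} as stated asserts only existence, so for \eqref{i1l} you should add the one-line energy argument showing uniqueness under $\|\tbv\|_3<a_0$, which is in any case needed for $N$ to be well defined). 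The paper's approach buys an explicit modulus of continuity and avoids re-examining every nonlinear term for weak--strong convergence; yours is softer, avoids assembling the rather long difference system, and generalizes more easily, at the price of the term-by-term bookkeeping you yourself identify as the main burden. One further small point: when you pass to the limit in \eqref{CL} you should note that the forces $\tilde F_n,\tilde G_n$ contain $P_n$ and $\nabla P_n$, so you also need the strong convergence $P_n\to P^*$ in $H^2$ that you extract from the uniform $\bar H^3$ bound of Lemma \ref{lmI2}; with that in place the argument closes.
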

\begin{proof}
By the definition of weakly continuous mapping (see, for example, \cite[P72,1.4.11.6]{NS04}),
it suffices to prove that $N$ is continuous on $K$ in the norm of $X$.

Let $(U_i,\bv_i,\theta_i)=N(\tilde U_i, \tilde \bv_i, \tilde\theta_i)$, $i=1,2$.
In particular, let $(U_i,P_i)\in(H^4\cap H_{0,\sigma}^1)\times \bar{H}^3$ and
$(\eta_i,\bv_i,\theta_i)\in\bar{H}^2\times (H^3\cap H_0^1)\times (H^3\cap H_0^1)$ be the solutions
of \eqref{i1l} and \eqref{CL} for given $(\tilde U_i, \tilde \bv_i, \tilde\theta_i)$ respectively,
i.e.,
\stepcounter{exist}
\begin{equation}\label{IL1}
\left\{\begin{array}{llll}
(\tilde U_i+\tilde \bv_i)\cdot\nabla U_i-\mu\triangle U_i+\nabla P_i=\h,\quad \dis{\int}P_idx=0, \\[1mm]
{\rm div}\, U_i=0;
\end{array}\right.
\end{equation}
and
\stepcounter{exist}
\begin{equation}
\label{CL1}
\left\{\begin{array}{llll}
U_i\cdot\nabla \eta_i + \dis{\frac{{\rm div}\bv_i}{\epsilon}}
=-\tilde \bv_i\cdot\nabla\eta_i-\eta_i{\rm div}\tbv_i-\epsilon{\rm div}(P_i(U_i+\tilde \bv_i)),\\[2mm]
U_i\cdot\nabla \bv_i-\mu\triangle \bv_i-\zeta\nabla{\rm div}\bv_i
+\dis{\frac{\nabla \eta_i+\nabla\theta_i}{\epsilon} } =\epsilon \tilde F_i
-\tilde \bv_i\cdot\nabla \tilde \bv_i-\tilde \theta_i\nabla\eta_i-\eta_i\nabla\tilde \theta_i,\\[2mm]
U_i\cdot\nabla\theta_i-\kappa\triangle\theta_i +\dis{\frac{{\rm div}\bv_i}{\epsilon} }
=\epsilon \tilde G_i-\tilde \bv_i\cdot\nabla\tilde \theta_i
-\eta_i{\rm div}\tbv_i-\tilde \theta_i{\rm div}\tbv_i,
\end{array}\right.
\end{equation}
where the force $\tilde F_i$ and heat source $\tilde G_i$ are given by
\begin{eqnarray*} &&
\tilde F_i=(\epsilon P_i +\eta_i)\f -(\epsilon P_i+\eta_i)(U_i+\tilde \bv_i)
\cdot\nabla(U_i+\tilde \bv_i) -\tilde \theta_i\nabla P_i- P_i\nabla\tilde \theta_i, \\[1mm]
&& \tilde G_i=\tilde\Psi_i -(\epsilon P_i+\eta_i)(U_i+\tilde \bv_i)\cdot\nabla\tilde\theta_i
+(\epsilon P_i+\eta_i)\tilde\theta_i{\rm div}\tbv_i+P_i{\rm div}\tbv_i.
\end{eqnarray*}

Now, if we set
$$ \begin{aligned}
&W=U_2-U_1,\ \ \tilde W=\tilde U_2-\tilde U_1,\ \ Q=P_2-P_1,\ \ \xi=\eta_2-\eta_1, \\[1mm]
&\bw=\bv_2-\bv_1,\ \ \   \tilde \bw=\tilde \bv_2-\tilde \bv_1,\ \ \  \beta=\theta_2-\theta_1,
\ \ \ \tilde \beta=\tilde \theta_2-\tilde \theta_1,\\[1mm]
&J=\tilde F_2-\tilde F_1,\ \ \ I=\tilde G_2-\tilde G_1 ,
\end{aligned}$$
then, we can have the following systems:
\stepcounter{exist}
\begin{equation}\label{IL12}
\left\{\begin{array}{llll}
(\tilde U_1+\tilde v_1)\cdot\nabla W-\mu\triangle W+\nabla Q
=-(\tilde W+\tbw)\cdot\nabla U_2,\quad\; \dis{\int} Qdx=0,\\[1mm]
{\rm div}W=0 ,
\end{array}\right.
\end{equation}
and
\stepcounter{exist}
\begin{equation}
\label{CL12}
\left\{\begin{array}{llll}
U_1\cdot\nabla \xi +\dis{\frac{{\rm div}\bw}{\epsilon} }
=-{\rm div}(\tilde \bv_1\xi+\tbw\eta_2)-\epsilon{\rm div}(P_1(W+\tbw)
+Q(U_2+\tilde \bv_2))-W\cdot\nabla \eta_2,\\[2mm]
U_1\cdot\nabla \bw-\mu\triangle \bw-\zeta\nabla{\rm div}\bw
+ \dis{\frac{\nabla \xi+\nabla\beta}{\epsilon} } \\[1mm]
\ \ \ \ \ =\epsilon J-\tbw\cdot\nabla \tilde \bv_2
-\tilde \bv_1\cdot\nabla\tbw-W\cdot\nabla \bv_2-\nabla(\tilde\theta_1\xi+\tilde\beta\eta_2),\\[2mm]
U_1\cdot\nabla\beta-\kappa\triangle\beta+ \dis{\frac{{\rm div}\bw}{\epsilon} }\\[1mm]
\ \ \ \ \ =\epsilon I-\tbw\cdot\nabla\tilde \theta_2-\tilde \bv_1\cdot\nabla\tilde\beta
-W\cdot\nabla\theta_2-\xi{\rm div}\tbv_1-\eta_2{\rm div}\tbw
-\tilde \beta{\rm div}\tbv_1-\tilde\theta_2{\rm div}\tbw,
\end{array}\right.
\end{equation}
where $J$ and $I$ read as
\begin{eqnarray*}
J &=& (\epsilon Q+\xi)\f -(\epsilon Q+\xi)(U_2+\tilde \bv_2)\cdot\nabla(U_2+\tilde \bv_2) \\[1mm]
&& -(\epsilon P_1+\eta_1)((W+\tbw)\cdot\nabla(U_2+\tilde \bv_2)+(U_1+\tilde \bv_1)\cdot\nabla(W+\tbw))
-\nabla(\tilde \beta P_1+ Q\tilde\theta_2), \\[1mm]
I &=& 2\mu D(W+\tbw):D(U_2+\tilde \bv_2)+2\mu D(U_1+\tilde \bv_1):D(W+\tbw)\\[1mm]
&& +\lambda {\rm div}(W+\tbw)\cdot{\rm div}(U_2+\tilde \bv_2)
+\lambda {\rm div}(U_1+\tilde \bv_1)\cdot{\rm div}(W+\tbw) \\[1mm]
&& -(\epsilon Q+\xi)(U_2+\tilde \bv_2)\cdot\nabla\tilde \theta_2 -(\epsilon P_1+\eta_1)
((W+\tbw)\cdot\nabla\tilde \theta_2+(U_1+\tilde \bv_1)\cdot\nabla\tilde \beta) \\[1mm]
&& +(\epsilon Q+\xi)\tilde\theta_2{\rm div}\tbv_2-(\epsilon P_1+\eta_1)
(\tilde\beta{\rm div}\tbv_2+\tilde\theta_1{\rm div}\tbw)
+Q{\rm div}\tbv_2+P_1{\rm div}\tbw.
\end{eqnarray*}

Note that $J$ and $I$ can be bounded as follows.
\stepcounter{exist}
\begin{equation}
\label{J}
\begin{aligned}
\|J\|_0\le& (\epsilon\|Q\|_1+\|\xi\|_1)(\|\f\|_2+\|U_2\|_2^2+\|\tilde \bv_2\|_2^2)
+(\epsilon\|P_1\|_2+\|\eta_1\|_2)(\|W\|_1+\|\tbw\|_1)\\[1mm]
&\cdot(\|U_2\|_2+\|\tilde \bv_2\|_2+\|U_1\|_2+\|\tilde \bv_1\|_2)+\|\tilde\theta\|_2\|Q\|_1+\|\tilde\beta\|_1\|P_1\|_2
\end{aligned}
\end{equation}
and
\stepcounter{exist}
\begin{equation}
\label{I}
\begin{aligned}
\|I\|_0\le& C\{(\|W\|_1+\|\tbw\|_1)(\|U_1\|_2+\|U_2\|_2+\|\tilde \bv_1\|_2
+\|\tilde \bv_2\|_2)+(\epsilon\|Q\|_1+\|\xi\|_1)(\|U_2\|_2+\|\tilde \bv_2\|_2)\\[1mm]
&\cdot\|\tilde\theta_2\|_2+(\epsilon\|P_1\|_2+\|\eta_1\|_2)
[(\|W\|_1+\|\tbw\|_1)\|\tilde\theta_2\|_2+\|U_1\|_2+\|\tilde \bv_1\|_2\|\beta\|_1]\\[1mm]
&+(\epsilon\|P_1\|_2+\|\eta_1\|_2)(\|\tilde\beta\|_1\|\tilde \bv_2\|_2
+\|\tilde\theta_1\|_2\|\tbw\|_1)+\|Q\|_1\|\tilde \bv_2\|_2+\|P_1\|_2\|\tbw\|_1\}
\end{aligned}
\end{equation}

On the one hand, we multiply $\eqref{IL12}_1$ by $W$ and make use of Poincar\'e's inequality to
deduce that
$$
\begin{aligned}
\Big(\mu-\frac{C\|\tbv\|_3}{2}\Big)\|\nabla W\|_0^2\le C(\|\tilde W\|_1+\|\tbw\|_1)\|U_2\|_3\|W\|_1,
\end{aligned}$$
where $C$ is a positive constant depending only on $\Omega$ and $\mu$. Consequently,
\stepcounter{exist}
\begin{equation}
\label{W}
\|W\|_1\le C(\|\tilde W\|_1+\|\tbw\|_1)
\end{equation}
for some positive constant $C$ depending only on $\Omega ,\mu ,\lambda ,\f ,E$ and $\epsilon_0$.

By the classical estimates for the Stokes equations
\begin{equation}\nonumber
\left\{
\begin{array}{l}
-\mu\triangle W+\nabla Q=-(\tilde W+\tbw)\cdot\nabla U_2-(\tilde U_1+\tilde \bv_1)\cdot\nabla W,\\[1mm]
{\rm div}W=0,
\end{array}\right.
\end{equation}
we obtain that
\stepcounter{exist}
\begin{equation}
\label{Q}
\begin{aligned}
\|W\|_2+\|\nabla Q\|_0\le& C (\|\tilde W+\tbw\|_1\|U_2\|_2+\|\tilde U_1+\tilde \bv_1\|_2\|W\|_1) \\[1mm]
\le &C(\|\tilde W\|_1+\|\tbw\|_1),
\end{aligned}
\end{equation}
where the estimate \eqref{W} has been used.

On the other hand, if we multiply $\eqref{CL12}_1$, $\eqref{CL12}_2$ and $\eqref{CL12}_3$ by $\xi$, $w$
and $\beta$ in $L^2$ respectively, we find that
\begin{eqnarray} &&
\mu\|\nabla \bw\|_0^2+\zeta\|{\rm div}\bw\|_0^2+\kappa\|\nabla\beta\|_0^2 \nonumber \\[1mm]
&& =-\int\Big[\xi{\rm div}\tbv_1+\eta_2{\rm div}\tbw+\tilde \bv_1\cdot\nabla\xi+\tbw\cdot\nabla\eta_2
+\epsilon{\rm div}(P_1(W+\tbw)+Q(U_2+\tilde \bv_2))+W\cdot\nabla\eta_2\Big]\xi dx  \nonumber \\[1mm]
&& \quad +\int\Big[\epsilon J-\tbw\cdot\nabla\tilde \bv_2-\tilde \bv_1\cdot\nabla\tbw
-W\cdot\nabla \bv_2-\nabla(\tilde\theta_1\xi+\tilde\beta\eta_2)\Big]\cdot w dx  \nonumber \\[1mm]
&& \quad +\int\Big[\epsilon D-\tbw\cdot\nabla\tilde\theta_2-\tilde \bv_1\cdot\nabla\tilde\beta
-W\cdot\nabla\theta_2-\xi{\rm div}\tbv_1-\eta_2{\rm div}\tbw
-\tilde\beta{\rm div}\tbv_1-\tilde\theta_1{\rm div}\tbw\Big]\beta dx  \nonumber  \\[1mm]
&& \leq C\Big\{\|\xi\|_0^2\|\tilde v_1\|_3+\|\xi\|_1\|\tbw\|_1\|\eta_2\|_1
+\epsilon\|\xi\|_0\big[\|P_1\|_2(\|W\|_1+\|\tbw\|_1)+\|Q\|_1(\|U_2\|_2+\|\tilde \bv_2\|_2)\big]  \nonumber \\[1mm]
&& \quad +\|W\|_1\|\eta_2\|_3\|\xi\|_1\Big\} +C\Big\{\epsilon \|J\|_0^2+\|\tbw\|_1^2(\|\tilde \bv_2\|_2^2
+\|\tilde \bv_1\|_2^2)+\|W\|_1^2\|\bv_2\|_2^2+\|\tilde\theta_2\|_1^2\|\xi\|_1^2   \nonumber  \\[1mm]
&& \quad +\|\tilde\beta\|_1^2\|\eta\|_2^2\Big\}+C\Big\{\epsilon\|I\|_0^2+\|\tbw\|_1^2
\|\tilde \theta_2\|_2^2+\|\tilde \bv_1\|_2^2\|\tilde\beta\|_1^2
+\|W\|_1^2\|\theta_2\|_2^2+\|\xi\|_1^2\|\tilde \bv_1\|_2^2   \nonumber \\[1mm]
&& \quad +\|\eta_2\|_2^2\|\tbw\|_1^2+\|\tilde\beta\|_1^2\|\tilde \bv_1\|_2^2
+\|\tilde\theta_1\|_2^2\|\tbw\|_1^2\Big\}+\delta(\|\bw\|_0^2+\|\beta\|_0^2).  \label{wbe}
\end{eqnarray}
Also, from the Stokes equations
\begin{equation}\nonumber
\left\{\begin{array}{l}
-\mu\epsilon\triangle \bw+\nabla\xi=\epsilon\big( \epsilon J-\tbw\cdot\nabla \tilde v_2
-\tilde \bv_1\cdot\nabla\tbw-W\cdot\nabla v_2-\nabla(\tilde\theta_1\xi+\tilde\beta\eta_2)
+\zeta\nabla{\rm div}\bw\big) -\nabla\beta,    \\[1mm]
{\rm div}\bw={\rm div}\bw,
\end{array}\right.
\end{equation}
we get the following estimate
\stepcounter{exist}
\begin{equation}
\label{xi}
\begin{aligned}
\epsilon\|\bw\|_2+\|\nabla\xi\|_0\le &\epsilon C\Big( \|{\rm div}\bw\|_2+\epsilon\|J\|_0
+\|\tbw\|_1(\|\tilde \bv_2\|_2+\|\tilde \bv_1\|_2)+\|W\|_1\|\bv_2\|_2  \\[1mm]
&+\|\tilde\theta_1\|_2\|\xi\|_1+\|\tilde\beta\|_1\|\eta_2\|_2\Big) +C\|\beta\|_1.
\end{aligned}
\end{equation}

Applying Poincar\'e's inequality and substituting \eqref{xi} into \eqref{wbe},
employing the estimates \eqref{J}--\eqref{Q} and recalling the smallness
of $\epsilon_0$ and $E$, we conclude that
$$\|W\|_1+\|\bw\|_1+\|\beta\|_1\le C(\|\tilde W\|_1+\|\tbw\|_1+\|\tilde \beta\|_1),$$
where $C$ is a positive constant depending only on $\Omega ,\mu ,\lambda,\f ,E$
and $\epsilon_0$. This completes the proof.
\end{proof}

Finally, having had Lemmas \ref{lmex1} and \ref{lmex2}, we can apply the Tikhonov fixed
point theorem to find a fixed point $(U,\bv,\theta)=N(U,\bv,\theta)$ in the set $K$. Moreover,
the pressure $P\in\bar{H}^2$ satisfies
$$\nabla P=\f+\g+\mu\triangle U-(U+\bv)\cdot\nabla U, $$
and $(U+\bv,\epsilon P+\eta,\theta)$ is a solution to \eqref{1}. Thus,
we have shown the following proposition.
\begin{prop}\label{prop5.1}
Let $\f, \g\in H^2(\Omega)$. Then, there exists an $\epsilon_0$ depending only on $\Omega ,\mu ,\lambda ,\f$
and $\g$, such that for all $\epsilon\in (0,\epsilon_0)$, there is a solution
$(U,P,\bv,\eta ,\theta)\in (H^4\cap H_{0,\sigma}^1)\times\bar{H}^3\times(H^3\cap H_0^1)\times\bar{H}^2\times(H^3\cap H_0^1)$ of
\eqref{i1} and \eqref{c1}, satisfying
$$\|\bv\|_3+\|\eta\|_2+\|\theta\|_3\le E ,$$
where $E$ is a small positive constant depending only on $\Omega ,\mu ,\lambda ,\f$ and $\g$.
Moreover, $(U+\bv,\epsilon P+\eta,\theta)$ is a solution of
the system \eqref{1} for any $\epsilon\in (0,\epsilon_0)$.
\end{prop}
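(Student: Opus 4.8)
The plan is to realize the desired solution as a fixed point of the operator $N$ and to invoke the Tikhonov fixed point theorem. First I would record the functional-analytic setup: the space $X=\bar{H}^1\times H_0^1\times H_0^1$ is separable and reflexive, and $K=K_0\times K_1(E)$, with $K_0$ as in \eqref{k0}, is a nonempty, bounded, closed, convex subset of $X$. For each $(\tilde U,\tilde\bv,\tilde\theta)\in K$ the value $N(\tilde U,\tilde\bv,\tilde\theta)=(U,\bv,\theta)$ is well defined, since Lemma \ref{lmI1} supplies the solution $(U,P)$ of \eqref{i1l} and Lemma \ref{lmce} supplies the unique weak solution $(\eta,\bv,\theta)$ of \eqref{CL}.

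Next I would feed the two structural lemmas into the fixed-point theorem. From Lemma \ref{lmex1} I obtain a small $\epsilon_0>0$, depending only on $\Omega,\mu,\lambda,\f,\g$, such that for every $\epsilon\in(0,\epsilon_0)$ one may fix $\delta$ and $E$ small and conclude $N(K)\subset K$; this is precisely the step that consumes the uniform-in-$\epsilon$ estimate \eqref{EoVT} together with the bounds of Lemma \ref{lmc4}. From Lemma \ref{lmex2} I obtain that $N:K\to K$ is weakly continuous. Since $X$ is separable and reflexive and $K$ is bounded, closed, and convex, the Tikhonov theorem then yields a fixed point $(U,\bv,\theta)\in K$ with $(U,\bv,\theta)=N(U,\bv,\theta)$.

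The final step is to turn this fixed point into a solution of the nonlinear split systems. The key point is that at a fixed point the arguments $(\tilde U,\tilde\bv,\tilde\theta)$ coincide with $(U,\bv,\theta)$, so the linearized problem \eqref{i1l} collapses to the nonlinear problem \eqref{i1} and the linearized system \eqref{CL} collapses to the nonlinear system \eqref{c1}; hence $(U,P)$ solves \eqref{i1} and $(\eta,\bv,\theta)$ solves \eqref{c1}. I would then recover the pressure from the momentum equation via
$$\nabla P=\f+\g+\mu\triangle U-(U+\bv)\cdot\nabla U,$$
so that $P\in\bar{H}^2$ follows from $U\in H^4$, $\bv\in H^3$ and the normalization $\int_\Omega P\,dx=0$. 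The identity recorded just after \eqref{c1} then shows that $(U+\bv,\epsilon P+\eta,\theta)$ solves \eqref{1}, while the bound $\|\bv\|_3+\|\eta\|_2+\|\theta\|_3\le E$ is immediate from membership in $K$.

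The genuine analytic work has already been absorbed into Lemmas \ref{lmex1} and \ref{lmex2}, so the main obstacle in assembling this proposition is not a new estimate but the correct \emph{ordering} of the smallness choices: one must first fix $\delta$ small (to beat the interpolation and Stokes-regularity constants), and only then shrink $\epsilon_0$ and $E$, so that the three inequalities driving $N(K)\subset K$ in the proof of Lemma \ref{lmex1} hold simultaneously. The remaining point is conceptual rather than computational, namely the observation that a fixed point of $N$ solves the \emph{nonlinear} systems \eqref{i1} and \eqref{c1} and not merely their linearizations, together with the recovery of $P$ at the claimed regularity.
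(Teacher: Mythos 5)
Your proposal is correct and follows essentially the same route as the paper: the Tikhonov fixed point theorem applied to $N$ on $K=K_0\times K_1(E)$, with Lemma \ref{lmex1} giving $N(K)\subset K$, Lemma \ref{lmex2} giving weak continuity, and the observation that a fixed point collapses the linearizations \eqref{i1l}, \eqref{CL} to the nonlinear systems \eqref{i1}, \eqref{c1}. The only minor imprecision is that $\|\eta\|_2\le E$ is not literally part of membership in $K$ (which only constrains $\|\bv\|_3+\|\theta\|_3$) but is supplied by the combined estimate \eqref{VTE} in the proof of Lemma \ref{lmex1}, which you in any case correctly identify as the source of the smallness.
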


\section{Incompressible limit}
\newcounter{limit}
\renewcommand{\theequation}{\thesection.\thelimit}

Let $\epsilon<\epsilon_0$ and $(U^\epsilon,\bv^\epsilon,\theta^\epsilon)\in K$ be the solution
established in Proposition \ref{prop5.1}.
We take $\bv=\tilde \bv=\bv^\epsilon$, $\theta=\tilde
\theta=\theta^\epsilon$ and $\eta=\eta^\epsilon$ in \eqref{VTE} to get that
\begin{equation}\nonumber
\begin{aligned}
\|\bv^\epsilon\|_3+\|\theta^\epsilon\|_3+\|\eta^\epsilon\|_2
\leq & C_{16}(1+M_1)^5\Big\{ \epsilon\Big[\|\f\|_1+(M_1+1)^2\Big]+(E+E^{\frac12})+\delta
\Big\}\|\eta\|_2 \\[1mm]
& +(\epsilon M_1^{\frac12}+E)\|\bv\|_3+(\delta M_1^{\frac12}+E)\|\theta\|_3 \\[1mm]
& +C_{16}(1+M_1)^5\Big\{\epsilon(M_1+1)^2+\epsilon(1+ M_1)\Big[\|\f\|_1+(M_1+1)^2\Big]\Big\}.
\end{aligned}
\end{equation}
Thus, by taking $\epsilon_0$ and $E$ so small that
$$C_{16}(1+M_1)^5\Big\{ \epsilon\Big[\|\f\|_1+(M_1+1)^2\Big]+(E+E^{\frac12})\Big\}<1, \;\;
\epsilon M_1^{\frac12}+E<1,$$
we obtain
$$ \|\bv^\epsilon\|_3+\|\theta^\epsilon\|_3+\|\eta^\epsilon\|_2
\leq C_{16}(1+M_1)^5\Big\{\epsilon(M_1+1)^2+\epsilon(1+ M_1)\Big[\|\f\|_1+(M_1+1)^2\Big]\Big\},
$$
whence,
\stepcounter{limit}
\begin{equation}\label{lim1}
\|\bv^\epsilon\|_3+\|\theta^\epsilon\|_3+\|\eta^\epsilon\|_2\to 0,\ \ \ \ {\rm as}\ \ \epsilon\rightarrow0.
\end{equation}

Furthermore, from $\eqref{c1}_1$, i.e.,
\begin{equation}\nonumber
U^\epsilon\cdot\nabla \eta^\epsilon+\frac{{\rm div}\bv^\epsilon}{\epsilon}
=-\bv^\epsilon\cdot\nabla\eta^\epsilon-\eta^\epsilon{\rm div}\bv^\epsilon
-\epsilon{\rm div}\big( P^\epsilon(U^\epsilon+\bv^\epsilon)\big)
\end{equation}
and \eqref{lim1} we get that as $\epsilon\rightarrow 0$,
\stepcounter{limit}
\begin{equation}\label{lim2}
\begin{aligned}
\Big\|\frac{{\rm div}\bv^\epsilon}{\epsilon}\Big\|_1\le&\|\bv^\epsilon\cdot\nabla\eta^\epsilon\|_1
+\|\eta{\rm div}\bv^\epsilon\|_1+\|\epsilon{\rm div}(P^\epsilon(U^\epsilon+\bv^\epsilon))\|_1
+\|U^\epsilon\cdot\nabla \eta^\epsilon\|_1\rightarrow 0.
\end{aligned}\end{equation}

Due to \eqref{lim1} and
$$\frac{\nabla \eta^\epsilon+\nabla\theta^\epsilon}{\epsilon}
=\epsilon F^\epsilon-\bv^\epsilon\cdot\nabla \bv^\epsilon-\theta^\epsilon\nabla\eta^\epsilon
-\eta^\epsilon\nabla\theta^\epsilon-U^\epsilon\cdot\nabla \bv^\epsilon
+\mu\triangle \bv^\epsilon+(\mu+\lambda)\nabla{\rm div}\bv^\epsilon$$
with $F^\epsilon=(\epsilon P^\epsilon+\eta^\epsilon)\f-(\epsilon P^\epsilon
+\eta^\epsilon)(U^\epsilon+\bv^\epsilon)\cdot\nabla(U^\epsilon+\bv^\epsilon)
-\theta^\epsilon\nabla P^\epsilon- P^\epsilon\nabla\theta^\epsilon$,
which comes from the transform of $\eqref{c1}_2$, one deduces, recalling Poincar\'e's inequality, that
\stepcounter{limit}
\begin{equation}\label{lim3}
\begin{aligned}
\Big\|\frac{\eta^\epsilon+\theta^\epsilon}{\epsilon}\Big\|_2\rightarrow0,\ \ \ \ \ \ \ {\rm as}\ \ \epsilon\to 0.
\end{aligned}\end{equation}

On the other hand, in view of Lemma \ref{lmI2}, we observe that $(U^\epsilon,P^\epsilon)$ is a
uniform-in-$\epsilon$ bounded sequence in $(H^4\cap H_0^1)\times\bar{H}^3$. Hence, there are a subsequence of
$(U^{\epsilon_k},P^{\epsilon_k})$, still denoted by $(U^{\epsilon_k},P^{\epsilon_k})$ for simplicity,
and $(\bar U,\bar P)\in(H^4\cap H_0^1)\times\bar{H}^3$, such that as $\epsilon\to 0$,
$$(U^{\epsilon},P^{\epsilon})\rightharpoonup(\bar U,\bar P),\ \ \
\mbox{weakly in }\;(H^4\cap H_0^1)\times\bar{H}^3,$$
and
$$(U^{\epsilon},P^{\epsilon})\rightarrow(\bar U,\bar P),\ \ \ \mbox{strongly in }\;(H^3\cap H_0^1)\times\bar{H}^2.$$

Thus, if we take to the limit as $\epsilon\to 0$ in \eqref{i1} and \eqref{c1}, we conclude
that $(\bar U,\bar P)$ is a solution of the steady incompressible Naiver-Stokes equations (\ref{ISNS}).

In conclusion, we have that
$$\lim_{\epsilon\to 0}\inf_{U,P\in \mathbf{L}}\|U^\epsilon+\bv^\epsilon-U\|_3
+\Big\|P^\epsilon+\frac{\eta^\epsilon+\theta^\epsilon}{\epsilon}-P\Big\|_2+\|\theta^\epsilon\|_3=0,$$
where $\mathbf{L}$ is the same as in Theorem \ref{thm}.
Thus, the proof of the low Mach number limit is completed.
\\[4mm]
\textbf{Acknowledgements.} The authors would like to thank Prof. Qiangchang Ju and Prof. Yue-Jun Peng for
helpful discussions. This work was supported by the National Basic Research Program
under the Grant 2011CB309705, NSFC (Grant Nos. 11229101, 11201115 and 11301083)
and China Postdoctoral Science Foundation (Nos. 2012M510365, 2012M520205).

\vspace{5mm}

\end{document}